% !TeX spellcheck = en_GB
\documentclass[11pt,a4paper]{article}

\usepackage{tikz}
\usepackage{subfigure}
\usepackage[english]{babel}
\usepackage{graphicx}
\usepackage{extarrows}

\usepackage[center]{caption2}
\usepackage{amsfonts,amssymb,amsmath,latexsym,amsthm}
\usepackage{multirow}
\DeclareSymbolFont{bbold}{U}{bbold}{m}{n}
\DeclareSymbolFontAlphabet{\mathbbold}{bbold}

\usepackage{ebezier,eepic}
\usepackage{color}
\usepackage{multirow}

\usepackage{epsf,epsfig,amsfonts,amsgen,amsmath,amstext,amsbsy,amsopn,amsthm,amsfonts,amssymb,amscd
}
\usepackage{ebezier,eepic}
\usepackage{color}
\usepackage{multirow}
\setlength{\textwidth}{150mm} \setlength{\oddsidemargin}{7mm}
\setlength{\evensidemargin}{7mm} \setlength{\topmargin}{-5mm}
\setlength{\textheight}{245mm} \topmargin -18mm

\def\va{\vec{a}}
\def\ex{\mathrm{ex}}
\def\biex{\mathrm{biex}}
\def\geqs{\geq}
\def\leqs{\leq}
\def\N{\mathcal{N}}

\def\K{K^{(r)}_{s,t}}

\newtheorem{thm}{Theorem}[section]
\newtheorem{cor}[thm]{Corollary}
\newtheorem{lem}[thm]{Lemma}
\newtheorem{prop}[thm]{Proposition}

\newtheorem{claim}{Claim}[section]

\newtheorem{Definition}{Definition}[section]

\begin{document}

\title{Some sharp results on the generalized Tur\'an numbers}
\author{
Jie Ma\footnote{School of Mathematical Sciences,
University of Science and Technology of China, Hefei, 230026,
P.R. China. Email: jiema@ustc.edu.cn. Research partially supported by National Natural Science Foundation of China (NSFC)
grants 11501539 and 11622110.}
~~~~~~~~
Yu Qiu\footnote{School of Mathematical Sciences,
University of Science and Technology of China, Hefei, 230026,
P.R. China. Email: yuqiu@mail.ustc.edu.cn.}
}

\date{}

\maketitle

\begin{abstract}
For graphs $T, H$, let $\ex(n,T,H)$ denote the maximum number of copies of $T$ in an $n$-vertex $H$-free graph.
In this paper we prove some sharp results on this generalization of Tur\'an numbers, where our focus is for the graphs $T,H$ satisfying $\chi(T)<\chi(H)$.
This can be dated back to Erd\H{o}s \cite{Erdos-62}, where he generalized the celebrated Tur\'an's theorem by showing that for any $r\geq m$,
the Tur\'an graph $T_r(n)$ uniquely attains $\ex(n,K_m,K_{r+1})$.
For general graphs $H$ with $\chi(H)=r+1>m$, Alon and Shikhelman \cite{Alon-2015} showed that $\ex(n,K_m,H)=\binom{r}{m}(\frac{n}{r})^m+o(n^m)$.
Here we determine this error term $o(n^m)$ up to a constant factor.
We prove that $\ex(n,K_m,H)=\binom{r}{m}(\frac{n}{r})^m+\biex(n,H)\cdot\Theta(n^{m-2})$, where $\biex(n,H)$ is the Tur\'an number of the decomposition family of $H$.
As a special case, we extend Erd\H{o}s' result, by showing that $T_r(n)$ uniquely attains $\ex(n,K_m,H)$
for any edge-critical graph $H$.
We also consider $T$ being non-clique, where even the simplest case seems to be intricate.
Following from a more general result, we show that for all $s\leq t$, $T_2(n)$ maximizes the number of $K_{s,t}$ in $n$-vertex triangle-free graphs if and only if $t<s+\frac12+\sqrt{2s+\frac14}$.
\end{abstract}

\section{Introduction}
Let $T$ and $H$ be two fixed graphs. Throughout the paper we denote by $\N(G,T)$ the number of copies of $T$ in a graph $G$,
and let $\ex(n,T,H)$ be the maximum number of copies of $T$ in an $n$-vertex $H$-free graph.

The well-known Tur\'an's theorem \cite{Turan-41} states that the maximum number of edges in an $n$-vertex $K_{r+1}$-free graph is uniquely attained by the {\it Tur\'an graph} $T_r(n)$,
i.e., the complete balanced $r$-partite graph on $n$ vertices.
This was generalized by Erd\H{o}s \cite{Erdos-62} as following.

\begin{thm}[\cite{Erdos-62}]\label{THM: Erdos complete}
For all $n\geq r\geq m\geq 2$, the Tur\'an graph $T_r(n)$ uniquely attains the maximum number of cliques $K_m$ in an $n$-vertex $K_{r+1}$-free graph.
\end{thm}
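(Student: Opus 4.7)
The plan is to combine Zykov symmetrization with an optimization of the $m$-th elementary symmetric polynomial, following the classical route for Tur\'an-type problems. First I would fix an extremal $n$-vertex $K_{r+1}$-free graph $G$ maximizing $\N(G, K_m)$, and for each vertex $v$ define $f(v) := \N_v(G, K_m)$, the number of $K_m$'s through $v$. The key operation is the \emph{cloning} of a non-edge $uv$: delete $v$ and attach a new vertex $v'$ with $N(v') = N_G(u)$. Since $u, v'$ are non-adjacent twins, the result $G'$ is still $K_{r+1}$-free, and because $v \notin N_G(u)$ a direct count gives $\N(G', K_m) - \N(G, K_m) = f(u) - f(v)$. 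Extremality therefore forces $f(u) = f(v)$ along every non-edge; and by iterating cloning whenever one finds a triple $u, v, w$ with $uv, vw \notin E$ and $uw \in E$ (cloning $v$ and $w$ to copies of whichever of $u, w$ has the larger $f$-value), I would reach an extremal $G$ in which non-adjacency is transitive. So $G = K_{n_1, \dots, n_s}$, and $K_{r+1}$-freeness forces $s \leq r$.

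It then remains to maximize $\N(K_{n_1, \dots, n_s}, K_m) = e_m(n_1, \dots, n_s)$, the $m$-th elementary symmetric polynomial, over $\sum n_i = n$, $n_i \geq 1$, $s \leq r$. For fixed $s \geq m$, $e_m$ is strictly Schur-concave: if $n_i \geq n_j + 2$, the transfer $(n_i, n_j) \mapsto (n_i - 1, n_j + 1)$ increases $e_m$ by $(n_i - n_j - 1)\cdot e_{m-2}(\{n_k : k \ne i, j\}) > 0$, where positivity uses $s - 2 \geq m - 2$. So balanced parts are uniquely optimal for fixed $s$. Moreover, increasing the number of parts helps: if $s < r$ and some $n_j \geq 2$, splitting $n_j = a + b$ with $a, b \geq 1$ changes $e_m$ by $a b \cdot e_{m-2}(\{n_k : k \ne j\}) \geq 0$, which becomes strictly positive after enough splits since $r \geq m$ ensures at least $m$ parts at the end. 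Combining, the unique maximizer has $s = r$ and all parts in $\{\lfloor n/r \rfloor, \lceil n/r \rceil\}$, i.e.\ $G = T_r(n)$.

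The main obstacle I anticipate is making the symmetrization terminate at a complete multipartite graph: the identity $f(u) = f(v)$ along non-edges does not on its own force non-adjacency to be transitive. I would handle this by pairing cloning with a monovariant (for instance, the number of distinct closed neighborhoods in $G$, or the number of triples witnessing failure of transitivity) that strictly decreases whenever $G$ is not yet complete multipartite, while cloning never decreases $\N(G, K_m)$. This guarantees the process terminates in a complete multipartite extremum, after which the symmetric-polynomial optimization is routine and yields uniqueness.
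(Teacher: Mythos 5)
The paper does not contain its own proof of this result; it is quoted from Erd\H{o}s \cite{Erdos-62}, and the surrounding Proposition~\ref{prop:turan2} uses the theorem rather than re-deriving it. So what can be assessed is the internal correctness of your Zykov-style argument. The overall blueprint (cloning along non-edges to show $f$ is constant on non-edges, reducing to complete multipartite graphs, then optimizing the elementary symmetric polynomial $e_m$) is a legitimate known route, and your $e_m$-optimization via transfers and splits is correct. However, there are two genuine problems in the symmetrization step.

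First, the double-clone operation is written backwards. Given $uv, vw\notin E$, $uw\in E$ and $f(u)=f(v)=f(w)$, you propose cloning $v$ and $w$ into copies of $u$. Tracking the count carefully: the first clone ($v\to u$) leaves $\N(\cdot,K_m)$ unchanged, but the second clone ($w\to u$) must remove the adjacent vertex $w$ and replace it by a nonneighbour of $u$, and the net effect of both steps is $\N(G_2,K_m)-\N(G,K_m)=-2c$, where $c$ is the number of $K_m$'s of $G$ containing the edge $uw$. So your operation \emph{decreases} the count whenever $c>0$. The correct move is to clone $u$ and $w$ into copies of the middle vertex $v$; that yields $\N(G_2,K_m)-\N(G,K_m)=+c\ge 0$. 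This is not a cosmetic slip: the entire contradiction-with-extremality hinges on the sign, and as written you would never obtain one.

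Second, even with the corrected clone, when $c=0$ (which is possible for $m\ge 3$, since the edge $uw$ need not lie in any $K_m$) there is no strict improvement, and you are left precisely with the termination problem you flag. You propose monovariants such as ``number of distinct closed neighborhoods'' or ``number of non-transitive triples,'' but neither is shown (nor is it clear) to be monotone under cloning, since the clone globally perturbs neighborhoods of third vertices. This gap needs an actual argument. One clean repair: among extremal $H$-free graphs take one minimizing $e(G)$; if a non-transitive triple exists with $c=0$ then $uw$ lies in no $K_m$, so deleting $uw$ preserves $\N(\cdot,K_m)$ and strictly decreases $e(G)$, contradicting minimality, hence $c>0$ for every such triple and the corrected clone gives a strict improvement --- so the edge-minimal extremal graph is complete multipartite, and your $e_m$ optimization pins it down as $T_r(n)$. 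Uniqueness among \emph{all} extremal graphs (not just edge-minimal ones) still requires a further short argument that you have not supplied. In short: right skeleton, but the clone is in the wrong direction and the termination/uniqueness step is not yet a proof.
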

\noindent Since then the function $\ex(n,T,H)$ for $T\neq K_2$ was studied for certain pairs $\{T,H\}$
(such as \cite{BG,Grz,GL,HHKNR}; see \cite{Alon-2015} for an elaborated discussion).
This was culminated in \cite{Alon-2015} by Alon and Shikhelman, where they systematically studied the function $\ex(n,T,H)$.
Among other results, they \cite{Alon-2015} proved that for any graph $H$ with chromatic number $\chi(H)=r+1>m$,
\begin{equation}\label{equ:AS}
\ex(n,K_m,H)=\N(T_r(n),K_m)+o(n^m).
\end{equation}
Recently this function has been the subject of extensive research,
including \cite{AKS,FKL,GGMV,GMV,GKPP,GS,LTTZ,Luo,MYZ} (by no means a comprehensive list).

In this paper we determine the error term $o(n^m)$ in \eqref{equ:AS} up to a constant factor.
Given a graph $H$ with $\chi(H)=r+1$, the \emph{decomposition family} of $H$, denoted by $\mathcal F_H$,
is the family of all bipartite graphs that are obtained from $H$ by deleting $r-1$ color classes in some $(r+1)$-coloring of $H$.
By $\biex(n,H)$ we denote the maximum number of edges in an $n$-vertex graph
which does not contain any graph in $\mathcal F_H$ as a subgraph.
Our main result is as following.
\begin{thm}\label{THM: Main Theorem}
For any integer $m$ and any graph $H$ with $\chi(H)=r+1>m\geq 2$, $$\ex(n,K_m,H)=\N(T_r(n),K_m)+\biex(n,H)\cdot\Theta(n^{m-2}).$$
\end{thm}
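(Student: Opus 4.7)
The plan is to prove the matching lower and upper bounds separately.

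\textbf{Lower bound.} I would take $T_r(n)$ with parts $V_1, \dots, V_r$ and replace the empty graph on $V_1$ by an $\F_H$-free graph $G_0$ on $|V_1|$ vertices attaining $\biex(|V_1|, H)$ edges; call the result $G^*$. To verify $G^*$ is $H$-free, suppose $H \hookrightarrow G^*$: pigeonholing the $r+1$ color classes of $H$ into the $r$ parts of $G^*$ forces two classes into the same $V_i$, and since only $V_1$ carries inside-edges both must land in $V_1$, producing a copy of some $F \in \F_H$ in $G_0$ --- contradiction. For the count, each edge of $G_0$ together with one vertex from each of any $m-2$ other parts yields a distinct $K_m$, giving $\N(G^*, K_m) \geq \N(T_r(n), K_m) + |E(G_0)| \cdot \binom{r-1}{m-2} \lfloor n/r \rfloor^{m-2}$. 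A routine averaging argument shows $\biex(\lceil n/r \rceil, H) = \Omega_r(\biex(n, H))$, yielding the $\Omega(\biex(n,H) \cdot n^{m-2})$ surplus.

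\textbf{Upper bound.} Let $G$ be an extremal $n$-vertex $H$-free graph. I would proceed by a stability-and-cleaning argument in three phases. First, starting from \eqref{equ:AS}, I would establish a quantitative stability statement: any $H$-free graph with $\N(T_r(n), K_m) - o(n^m)$ copies of $K_m$ admits a partition $V_1 \cup \dots \cup V_r$ with parts of size $(1 + o(1))n/r$, with $o(n^2)$ non-edges between parts and $o(n^2)$ edges inside parts; such stability for generalized Tur\'an problems can be derived by adapting the Erd\H{o}s--Simonovits argument, substituting $K_m$-counts for edge-counts. Second, I would prune any vertex $v$ whose degree into some $V_j$ (with $v \notin V_j$) drops below $(1 - \delta)|V_j|$; stability forces only $o(n)$ such vertices, contributing at most $o(n^m)$ copies of $K_m$. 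Third, in the pruned graph $G'$ each $G'[V_i]$ must be $\F_H$-free: if $F \in \F_H$ embeds into $G'[V_i]$, the remaining $r - 1$ color classes of the corresponding $(r+1)$-coloring of $H$ can be embedded greedily into the other parts via a K\H{o}v\'ari--S\'os--Tur\'an-type common-neighborhood bound, using the near-complete bipartite structure from the cleaning step, producing $H \hookrightarrow G$. Hence $|E(G'[V_i])| \leq \biex(n, H)$, and summing the $K_m$'s in $G'$ --- those with one vertex per part contribute at most $\N(T_r(n), K_m)$, while those using an inside-edge have at most $\binom{r-1}{m-2}(n/r)^{m-2}$ extensions each --- yields $\N(G', K_m) \leq \N(T_r(n), K_m) + O(\biex(n, H) \cdot n^{m-2})$.

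\textbf{Main obstacle.} The crux will be to control the $o(n^m)$ losses from stability and cleaning so that they are absorbed into $O(\biex(n,H) \cdot n^{m-2})$ rather than appearing as a crude $o(n^m)$; this matters because $\biex(n,H) \cdot n^{m-2}$ can be as small as $\Theta(n^{m-1})$, e.g.\ when $\F_H$ contains a tree. Achieving this requires a quantitative stability statement whose error scales with the actual inside-edge surplus, which I would approach either by iterating the cleaning step (charging each pruned vertex's $K_m$-contributions against inside edges it removes) or by a direct Zykov-style symmetrization on the extremal $G$. A secondary technicality is to ensure the KST-embedding in the structural step carries through with constants depending only on $H$ and $r$.
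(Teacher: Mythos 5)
The construction as written is not $H$-free in general, and the verification is flawed: an embedding of $H$ into $G^*$ need not send colour classes of $H$ into single parts, so the ``pigeonhole'' step does not apply; the correct argument looks at $W_1$, the preimage of $V_1$, notes $H[W_1]\subseteq G_0$ and that $V(H)\setminus W_1$ splits into $r-1$ independent sets, and then needs $H[W_1]$ to contain a member of $\F_H$ --- which is only guaranteed when $H[W_1]$ is bipartite, i.e.\ when the inserted graph $G_0$ is bipartite. With a non-bipartite extremal $\F_H$-free graph this genuinely fails. Concretely, let $H$ be the $3$-sun: a triangle $xyz$ together with an independent set $\{a,b,c\}$, where $a\sim y,z$, $b\sim x,z$, $c\sim x,y$. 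Then $\chi(H)=3$, the proper $3$-colouring is unique, and $\F_H=\{P_4\}$; the extremal $P_4$-free graphs are disjoint triangles, and inserting disjoint triangles into one side of $T_2(n)$ creates many copies of $H$ (any inserted triangle plus three vertices of the other side). This is precisely why the paper first extracts a \emph{bipartite} subgraph $F$ of an extremal $\F_H$-free graph, keeping a constant fraction of its edges, before inserting it into a part of $T_r(n)$; your argument can be repaired the same way at the cost of constants only, but as proposed it has a genuine error.

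\textbf{On your upper bound.} The obstacle you name at the end is the real one, and your plan does not resolve it: pruning $o(n)$ low-degree vertices can destroy up to $n^{m-1}$ copies of $K_m$ each, a total loss of $o(n^m)$ that cannot be absorbed into $\biex(n,H)\cdot O(n^{m-2})$ --- in the edge-critical case $\biex(n,H)=0$, so there is no room at all (and Corollary \ref{cor:edge-critical} even requires exact extremality). The paper closes this gap with three ingredients absent from your sketch: (i) a preliminary reduction (Claim \ref{LEM: large deltam}) to graphs with $\delta^{(m)}(G)\ge\delta^{(m)}(T_r(n))$, so that by Proposition \ref{prop:turan2} deleted vertices telescope exactly against $\N(T_r(\cdot),K_m)$ and never create an unaccounted loss; (ii) after stability, the set $B$ of vertices with large degree inside their own part is shown by a supersaturation argument to have size at most a \emph{constant} $K(\sigma(H)-1)$ (Claim \ref{CLM: few bad vertices}), not merely $o(n)$; and (iii) Proposition \ref{Obs: sigma VS biex} ($\sigma(H)\ge 2$ implies $\biex(n,H)\ge n-1$) is what allows the $|B|\cdot n^{m-1}$ term to be charged to $\biex(n,H)\cdot O(n^{m-2})$, while $\sigma(H)=1$ forces $B=\emptyset$. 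Without the minimum-$K_m$-degree reduction, the constant bound on the bad set, and the $\sigma$-versus-$\biex$ link, your ``charging''/symmetrization idea remains an unproved key step rather than a proof. The remaining parts of your outline --- stability for $K_m$-counts (Theorem \ref{THM: partition}), $\F_H$-freeness of the parts via greedy embedding (Claim \ref{CLM: few inter-edges}), and the final count --- do match the paper's argument.
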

\noindent Since $\biex(n,H)=O(n^{2-\alpha_H})$ for some $\alpha_H>0$ by the classic result of K\"ov\'ari, Tur\'an and S\'os \cite{Sos-54},
this improves the error term to $O(n^{m-\alpha_H})$.
A graph is {\it edge-critical} if there exists some edge whose deletion reduces its chromatic number.
Simonovits \cite{Simonovits-66} proved that for any edge-critical graph $H$ with $\chi(H)=r+1\geq 3$ and for sufficiently large $n$,
the Tur\'an graph $T_r(n)$ is the unique graph which attains the maximum number of edges in an $n$-vertex $H$-free graph.
It is clear that if $H$ is edge-critical, then $\biex(n,H)=0$.
This enables us to obtain the following
\begin{cor}\label{cor:edge-critical}
Let $H$ be an edge-critical graph with $\chi(H)=r+1>m\geq 2$ and $n$ be sufficiently large.
Then the Tur\'an graph $T_r(n)$ is the unique graph attaining the maximum number of $K_m$'s in an $n$-vertex $H$-free graph.
\end{cor}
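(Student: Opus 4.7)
The plan is to combine Theorem~\ref{THM: Main Theorem} with an Erd\H{o}s--Simonovits-style stability statement and an embedding argument exploiting edge-criticality. Since $H$ is edge-critical we have $\biex(n,H)=0$, so Theorem~\ref{THM: Main Theorem} already yields $\ex(n,K_m,H)=\N(T_r(n),K_m)$, and the existence of an extremal $T_r(n)$ is automatic. It remains to show that any $n$-vertex $H$-free graph $G$ with $\N(G,K_m)=\N(T_r(n),K_m)$ must equal $T_r(n)$.

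First I would extract a stability version of Theorem~\ref{THM: Main Theorem}: for every $\eta>0$ there exist $\delta>0$ and $n_0$ such that if $n\geq n_0$ and $G'$ is an $n$-vertex $H$-free graph with $\N(G',K_m)\geq \N(T_r(n),K_m)-\delta n^m$, then $G'$ admits a vertex partition $V_1\cup\cdots\cup V_r$ satisfying $\bigl||V_i|-n/r\bigr|<\eta n$ and such that all but at most $\eta n$ vertices of each $V_i$ have fewer than $\eta n$ non-neighbours in every other $V_j$. Such a statement should be read off directly from the proof of Theorem~\ref{THM: Main Theorem} (whose upper-bound argument already exhibits this structure for near-extremal configurations), or derived independently by applying Szemer\'edi regularity and supersaturation to the $K_m$-count. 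Apply this to our $G$ with $\eta$ chosen sufficiently small in terms of $|V(H)|$.

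The heart of the argument is to forbid any edge of $G$ lying inside a single part $V_i$. Suppose, for contradiction, that $uv\in E(G)$ with $u,v\in V_i$, and first assume $u,v$ are both \emph{typical} in the above sense (the set of atypical vertices has size $o(n)$ and is handled by a similar cleanup). Because $H$ is edge-critical with $\chi(H)=r+1$, choose an edge $e'=u'v'\in E(H)$ and a proper $r$-coloring $(A_1,\dots,A_r)$ of $H-e'$ with $u',v'\in A_1$. Map $u'\mapsto u$ and $v'\mapsto v$, and greedily embed $A_1\setminus\{u',v'\}$ into typical vertices of $V_i$ and each $A_\ell$ ($\ell\neq 1$) into typical vertices of $V_\ell$; since $|V(H)|$ is a fixed constant, each $|V_\ell|$ is of order $n/r$, and cross-densities among typical vertices exceed $1-O(\eta)$, for $\eta$ small the greedy extension succeeds and yields a copy of $H\subseteq G$, contradicting $H$-freeness. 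Hence no such edge exists and $G$ is $r$-partite.

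Once $G$ is $r$-partite it is $K_{r+1}$-free and hence $H$-free, so adding any missing cross-part edge preserves $H$-freeness while weakly increasing $\N(\cdot,K_m)$; extremality therefore forces $G$ to be complete $r$-partite with some part sizes $n_1,\dots,n_r$. Finally, the number of $K_m$'s in such a graph equals the $m$th elementary symmetric polynomial $e_m(n_1,\dots,n_r)$, and a direct transfer argument shows that whenever $n_i\geq n_j+2$, replacing $(n_i,n_j)$ by $(n_i-1,n_j+1)$ strictly increases $e_m$ (the change equals $(n_i-n_j-1)\,e_{m-2}(n_k:k\neq i,j)$, which is positive since $r>m$), so the unique maximum is attained at balanced part sizes and $G=T_r(n)$. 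The main obstacle is the first step---establishing $K_m$-count stability with clean control over the atypical vertices---since the edge-critical embedding and the balancing step are routine once stability is in hand.
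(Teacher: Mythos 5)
Your high-level plan -- stability for the $K_m$-count, then use edge-criticality to embed $H$ off any intra-part edge, then reduce to Erd\H{o}s's Theorem~\ref{THM: Erdos complete} once $G$ is $r$-partite -- is the same route the paper takes. The final balancing/transfer step is also correct (the discrete change in $e_m$ is indeed $(n_i-n_j-1)e_{m-2}$ of the remaining parts, positive since $r\geq m$), though the paper just cites Theorem~\ref{THM: Erdos complete} outright.

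The genuine gap is exactly the one you flag and then wave away: ``the set of atypical vertices has size $o(n)$ and is handled by a similar cleanup.'' This cleanup is in fact the technical heart of the uniqueness proof, and ``similar'' does not describe it. Eliminating an intra-part edge between two typical vertices is easy, but you must also rule out \emph{every} vertex that has $\Omega(n)$ neighbours inside its own part or has poor cross-density to some other part, and for such a vertex the greedy extension you describe simply fails (it may lack enough common neighbours to host the $r$-partite frame). The paper handles this in two non-obvious steps that your proposal omits. First, a minimum $K_m$-degree reduction (Claim~4.1): iteratively delete any vertex whose $K_m$-degree falls below $\delta^{(m)}(T_r(\cdot))$, observing via Proposition~\ref{prop:turan2} that this only strengthens the extremality assumption; without this step a near-isolated vertex cannot be excluded by a local embedding argument. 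Second, a supersaturation count (Claim~4.2): any vertex $v$ with $\geq \beta n$ neighbours in every part must see $\Omega(n^{br})$ copies of $K_r(b)$ in its neighbourhood, yet each such blowup copy can be seen by at most $\sigma(H)-1$ bad vertices on pain of producing $H$; for edge-critical $H$ one has $\sigma(H)=1$, so the set $B$ of bad vertices is literally empty, not merely $o(n)$. Only with $B=\emptyset$ (and then Claim~4.4 giving every remaining vertex near-full cross-degree) does the intra-part embedding argument apply to \emph{all} intra-part edges, making $G$ $r$-partite. As written, your argument establishes that $G$ is ``$r$-partite up to $o(n)$ exceptional vertices,'' which does not yield uniqueness.
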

\noindent This can be viewed as a common generalization of the result of
Erd\H{o}s \cite{Erdos-62} and the result of Simonovits \cite{Simonovits-66}.
To prove the upper bound of Theorem \ref{THM: Main Theorem}, we establish a stability result.

\begin{thm}\label{THM: partition}
Let $H$ be a graph with $\chi(H)=r+1> m\geq 2$.
If $G$ is an $n$-vertex $H$-free graph with $\N(G,K_m)\geqs\N(T_r(n),K_m)-o(n^m)$,
then $G$ can be obtained from $T_r(n)$ by adding and deleting a set of $o(n^2)$ edges.
\end{thm}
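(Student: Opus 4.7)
\medskip

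The plan is to use Szemer\'edi's regularity lemma to compress $G$ to a small cluster graph $R$, show that $R$ must be $K_{r+1}$-free and nearly extremal for Theorem~\ref{THM: Erdos complete}, and then transfer a stability result back to $G$. Fix parameters $\epsilon\ll d\ll 1$ and apply the regularity lemma to $G$, obtaining an equitable partition $V_0,V_1,\ldots,V_k$ with $|V_0|\leq \epsilon n$ and all but at most $\epsilon k^2$ pairs $(V_i,V_j)$ being $\epsilon$-regular. Let $R$ be the cluster graph on $[k]$ in which $ij$ is an edge iff $(V_i,V_j)$ is $\epsilon$-regular with density at least $d$.

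I would first show $R$ is $K_{r+1}$-free: any $K_{r+1}$ in $R$, together with a proper $(r+1)$-coloring of $H$ and the standard embedding (counting) lemma, would produce a copy of $H$ in $G$, contradicting $H$-freeness. A routine counting argument using regularity then gives
\[
\N(G,K_m)\leq \sum_{K_m\subseteq R}\prod_{ij\in E(K_m)} d(V_i,V_j)\cdot (n/k)^m+o(n^m)\leq \N(R,K_m)\cdot (n/k)^m+o(n^m).
\]
Combining this with Theorem~\ref{THM: Erdos complete} applied to the $K_{r+1}$-free graph $R$ (so that $\N(R,K_m)\leq \N(T_r(k),K_m)$) and with the hypothesis $\N(G,K_m)\geq \N(T_r(n),K_m)-o(n^m)$, I obtain $\N(R,K_m)=\N(T_r(k),K_m)-o(k^m)$, and moreover the densities $d(V_i,V_j)$ on the $R$-edges lying in near-extremal copies of $K_m$ must average $1-o(1)$.

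The crux, and the main obstacle, is a stability version of Erd\H{o}s's theorem: any $K_{r+1}$-free graph $R$ on $k$ vertices with $\N(R,K_m)\geq \N(T_r(k),K_m)-o(k^m)$ is $o(k^2)$-close to $T_r(k)$ in edit distance. I would establish this either by a compactness/graphon argument (a subsequential graphon limit is $K_{r+1}$-free and attains the maximal $K_m$-density, and a Lagrangian/Motzkin--Straus uniqueness argument forces it to be the balanced complete $r$-partite graphon) or by Zykov symmetrization: this operation never decreases $\N(\cdot,K_m)$, terminates in a complete multipartite graph on at most $r$ parts, and strict convexity of the $K_m$-count in the part sizes makes $T_r(k)$ the unique stable fixed point. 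Quantifying the stability of either argument is the technical heart of the proof.

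Once this stability is in hand, aggregating the $r$ color classes of the near-Tur\'an $R$ produces a partition $U_1,\ldots,U_r$ of $V(G)\setminus V_0$; the between-part densities must be $1-o(1)$ (else the near-extremal $K_m$-count in $R$ translates to a strict loss in $\N(G,K_m)$), while within-part densities are at most $d+o(1)$ by construction of $R$. Placing $V_0$ into any $U_a$ then shows that $G$ differs from $T_r(n)$ in at most $o(n^2)+d n^2+\epsilon n^2=o(n^2)$ edges after letting $d,\epsilon\to 0$, as required.
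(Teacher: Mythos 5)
You have the right high-level architecture (regularity lemma, cluster graph $R$, show $R$ is $K_{r+1}$-free, push a stability statement back to $G$), and you correctly identify the crux: you need to know that a $K_{r+1}$-free graph $R$ on $k$ vertices with $\N(R,K_m)\geq\N(T_r(k),K_m)-o(k^m)$ is $o(k^2)$-close to $T_r(k)$. But you leave this step unproved, offering only a sketch via graphons or Zykov symmetrization and conceding that ``quantifying the stability of either argument is the technical heart of the proof.'' That is exactly the gap: as written, the proposal does not contain a proof of the statement it needs.

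The paper sidesteps this entirely with a short, decisive trick: Eckhoff's theorem (Theorem~\ref{THM: key technique}) says that for a $K_{\omega+1}$-free graph with $e(G)=e(T_\omega(n_1))+n_2$ one has $\N(G,K_m)\leq\N(T_\omega(n_1),K_m)+\N(T_{\omega-1}(n_2),K_{m-1})$. Applying this to $R$ converts the hypothesis ``$\N(R,K_m)$ is near-extremal'' directly into ``$e(R)$ is near-extremal'' (specifically $e(R)\geq e(T_r(k))-c_H\delta^{1/m}k^2$), and then the classical Erd\H{o}s--Simonovits \emph{edge} stability theorem applies to $R$ off the shelf. No new clique-count stability theorem has to be proved. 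The same Eckhoff step also powers the auxiliary Lemma~\ref{LEM:InEdges}, which lets the paper transfer a partition with few within-part edges back to the conclusion about $G$. Your final transfer step is also looser than it should be: the edges of $R$ only guarantee pair densities $\geq d$, not $1-o(1)$, and within-part edges of $G$ can hide in the $\leq\epsilon k^2$ irregular pairs (which you do not account for); the paper's itemized bookkeeping of the five edge/clique types handles these carefully. In short, the plan is sound but incomplete: you would need to supply the quantitative clique-count stability you postponed, whereas the paper's use of Eckhoff reduces everything to the known edge-count stability and closes the argument.
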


Let us prove the lower bound of Theorem \ref{THM: Main Theorem} here.
The construction we use can be found in \cite{Allen-2014}.
Let $F''$ be an $n$-vertex $\mathcal F_H$-free graph with $\biex(n,H)$ edges.
Then $F''$ contains an $\lceil\frac nr\rceil$-vertex subgraph $F'$ with at least $\biex(n,H)/2r^2$ edges.
One can further find a bipartite subgraph $F$ of $F'$ with at least $\biex(n,H)/4r^2$ edges.
Consider the graph $G$ obtained by inserting $F$ into the largest part of $T_r(n)$.
Since $\chi(H)=r+1$ and $F$ is bipartite, it's easy to see $G$ is $H$-free.
As each edge in $F$ is contained in $\Omega(n^{m-2})$ copies of $K_m$ in $G$,
we see that $\ex(n,K_m,H)\geqs\N(G,K_m)\geqs\N(T_r(n),K_m)+\biex(n,H)\cdot \Omega(n^{m-2})$.

We also consider the function $\ex(n,T,H)$ for some $T$ not being a clique.
In the next result we maximize the number of some complete $r$-partite graphs $T$ in $K_{r+1}$-free graphs.
It reveals that the relatively sizes of the parts in $T$ will play an important role.
\begin{thm}\label{cor:mulpart}
(i) Let $n$ be sufficiently large and $T$ be any complete balanced $r$-partite graph. Then the Tur\'an graph $T_r(n)$ is the unique $n$-vertex $K_{r+1}$-free graph which maximizes the number of $T$-copies. \\
(ii) Let $n$ be sufficiently large and $t\geq s$. Then $T_2(n)$ maximizes the number of copies of $K_{s,t}$ in $n$-vertex triangle-free graphs if and only if $t<s+\frac12+\sqrt{2s+\frac14}$.
\end{thm}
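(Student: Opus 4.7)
My plan is a Zykov-type symmetrization reducing any $K_{r+1}$-free graph to a complete multipartite one without decreasing $\N(G,T)$, followed by an explicit optimization over the resulting partition sizes.

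For the symmetrization step, let $u,v$ be non-adjacent vertices in a $K_{r+1}$-free graph $G$, and let $G_{u\to v}$ be obtained by replacing $N(v)$ with $N(u)$ (keeping $u\not\sim v$). Then $G_{u\to v}$ is still $K_{r+1}$-free, since any new $K_{r+1}$ would have to contain both $u$ and $v$. Partitioning copies of a complete $r$-partite graph $T$ in $G$ by their intersection with $\{u,v\}$ (using neither, exactly one, or both), and noting that a non-adjacent pair in a $T$-copy must lie in a common color class of $T$, one verifies $\N(G_{u\to v},T)\ge\N(G,T)$ whenever the number of copies through $u$ is at least the number through $v$; moreover the ``through-both'' count strictly grows whenever $N(u)\ne N(v)$ (via extensions using a vertex of $N(u)\setminus N(v)$). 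Iterating, I reach $G^{*}$ in which any two non-adjacent vertices share a neighborhood, so $G^{*}$ is complete multipartite; since $T\supseteq K_r$, we in fact have $G^{*}=K_{n_1,\ldots,n_r}$ with $\sum_i n_i=n$.

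For (i) with $T=K_{k,\ldots,k}$, a direct count yields $\N(K_{n_1,\ldots,n_r},T)=\prod_{i=1}^{r}\binom{n_i}{k}$. Strict log-concavity of $x\mapsto\binom{x}{k}$ together with discrete Jensen uniquely maximizes this product over integer partitions of $n$ into $r$ summands at the balanced one, so $G^{*}=T_r(n)$. Since the Zykov step is strict whenever $N(u)\ne N(v)$, an optimal $G$ must already be complete multipartite, and hence $G=T_r(n)$.

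For (ii), take $r=2$ and $s<t$, and set
\[
f(a):=\N(K_{a,n-a},K_{s,t})=\binom{a}{s}\binom{n-a}{t}+\binom{a}{t}\binom{n-a}{s},
\]
a symmetric polynomial of degree $s+t$ with $f(a)=f(n-a)$, so that $a=n/2$ is always a critical point. A logarithmic-derivative calculation at the center gives
\[
f''(n/2)=\frac{8\binom{n/2}{s}\binom{n/2}{t}}{n^{2}}\bigl[(t-s)^{2}-(s+t)\bigr]+O(n^{s+t-3}),
\]
so to leading order $a=n/2$ is a local maximum exactly when $(t-s)^{2}<s+t$, equivalently $t<s+\tfrac12+\sqrt{2s+\tfrac14}$. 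In this regime $a=n/2$ is the global maximum on $[0,n]$ by symmetry together with a direct monotonicity check on $[0,n/2]$, and combining with the symmetrization gives $\N(G,K_{s,t})\le\N(T_2(n),K_{s,t})$ for every triangle-free $G$. Conversely, if $(t-s)^{2}\ge s+t$, then $a=n/2$ is not the global maximum: by the leading-order sign in the strict case, and by a sub-leading polynomial analysis at the boundary (e.g.\ for $(s,t)=(1,3)$ one computes $f(a)-f(n/2)=\tfrac{u^{2}(3n-4)}{6}-\tfrac{u^{4}}{3}$ with $u=a-n/2$, so $a=n/2$ is a local minimum). A suitable $K_{a^{*},n-a^{*}}$ then strictly beats $T_2(n)$.

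The main technical subtleties I anticipate are (a) upgrading the Zykov inequality to a \emph{strict} inequality when $N(u)\ne N(v)$ for uniqueness in (i), which amounts to exhibiting a concrete $T$-copy through both $u$ and $v$ that uses a vertex of $N(u)\setminus N(v)$; and (b) handling the boundary case $(t-s)^{2}=s+t$ in (ii) via the sub-leading terms of the explicit polynomial $f(a)$. Both obstacles are approachable by direct calculation given the closed forms above.
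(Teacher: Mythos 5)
Your calculations on complete multipartite hosts are correct and agree with the paper's (the product $\prod_i\binom{n_i}{k}$ for balanced $T$, the sign of $f''(n/2)$ governed by $(t-s)^2-(s+t)$, and the exact identity $f(a)-f(n/2)=\frac{u^2(3n-4)}{6}-\frac{u^4}{3}$ for $(s,t)=(1,3)$ all match the computations behind parts (b) and (c) of Theorem~\ref{THM: MAIN ex(n,K_{s:(r-1);t})}), so the ``only if'' direction of (ii) is essentially fine. The genuine gap is the load-bearing reduction: the claim that Zykov symmetrization turns an extremal $K_{r+1}$-free graph into a complete multipartite one. The single cloning step does give $\N(G_{u\to v},T)\ge\N(G,T)$ when the number of $T$-copies through $u$ is at least that through $v$ (the through-both count can only grow, since the common neighbourhood of the clones contains $N(u)\cap N(v)$), but your strictness claim --- that the through-both count \emph{strictly} grows whenever $N(u)\neq N(v)$ --- is unjustified and false in general: a vertex $x\in N(u)\setminus N(v)$ need not lie in any $T$-copy through both clones (for instance if $x$ has too few neighbours inside $N(u)$ to be completed to the required $K^{(r)}_{s,t}$-structure). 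Without strictness, extremality only tells you that every cloning leaves the count unchanged, which does not force $N(u)=N(v)$ for non-adjacent pairs, and blind iteration of non-strict steps need not terminate in a complete multipartite graph. The classical two-vertex trick that completes Zykov's argument for edges and cliques (replace an adjacent pair $u,w$ by two clones of a common non-neighbour $v$) relies on the fact that no counted copy contains a non-adjacent pair; for non-complete $T$ the through-both terms wreck that bookkeeping and you offer no substitute. Restoring strictness amounts to ruling out low-degree vertices in extremal graphs, and that is precisely the hard work the paper does instead: the iterative deletion of vertices of small $K^{(r)}_{s,t}$-degree combined with induction on $r$, the analysis of $F_{r,s,t}$ in Lemma~\ref{LEM: DeltaFr>0}, Theorem~\ref{LEM: Andrasfai-Erdos-Sos} to force $r$-partiteness, and then Lemma~\ref{LEM: r-partite T_r(n) is best} for the optimization over complete $r$-partite graphs (which for $s<t$ is substantially more delicate than the balanced product you optimize in (i), and is also the ``direct monotonicity check'' you defer in (ii)).

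There is also strong external evidence that the reduction cannot be as simple as you propose: if your symmetrization argument were correct as stated, it would show for \emph{every} $s\le t$ that extremal triangle-free graphs for counting $K_{s,t}$ are complete bipartite, which is exactly the question the authors single out as open in the concluding remarks for $t\ge s+\frac12+\sqrt{2s+\frac14}$. So the gap is confined to, but fatal for, the ``if'' directions of (i) and (ii), where arbitrary $K_{r+1}$-free graphs must be controlled; to repair it you would need an argument (such as the paper's minimum-degree machinery) showing that extremal graphs have no vertices of small $T$-degree before any strictness or termination claim for the symmetrization can be made.
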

\noindent This will follow from Theorem \ref{THM: MAIN ex(n,K_{s:(r-1);t})} in Section \ref{sec:rpartite} in a more general setting.

The remaining of this paper is organized as follows.
In Section 2 we give out some preliminaries.
In Section 3 we prove Theorem \ref{THM: partition}.
The proofs of Theorem \ref{THM: Main Theorem} and Corollary \ref{cor:edge-critical} will be completed in Section 4.
In Sections 5 and 6, we show Theorem \ref{THM: MAIN ex(n,K_{s:(r-1);t})}, which would imply Theorem \ref{cor:mulpart}.
Section 7 contains some concluding remarks.
Throughout the paper, let $[k]=\{1,\cdots,k\}$ for a positive integer $k$.

\section{Preliminaries}
In this section we will present some definitions and results needed in the subsequent sections.

Let $\sigma(H)$ be the smallest size of a color class in a proper $\chi(H)$-coloring of a graph $H$.
So if $H$ is edge-critical, then $\sigma(H)=1$.
The next proposition can be found in \cite{Allen-2014}; we include its short proof for the completeness.
\begin{prop}[\cite{Allen-2014}]\label{Obs: sigma VS biex} If $H$ is a graph with $\chi(H)\geq 3$ and $\sigma(H)\geqs2$, then $\biex(n,H)\geqs n-1$.
\end{prop}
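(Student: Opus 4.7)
The plan is to exhibit the star $G = K_{1,n-1}$ as the extremal construction; it has $n-1$ edges and is $2K_2$-free, since any two of its edges meet at the center. The only thing to verify is that $G$ contains no member of the family $\mathcal{F}_H$. To this end, I would prove the key claim that every $F \in \mathcal{F}_H$ contains $2K_2$ as a subgraph; since $G$ is $2K_2$-free, no such $F$ can embed into $G$, and the proposition follows.

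For the claim, consider $F \in \mathcal{F}_H$ obtained from a proper $(r+1)$-coloring $V_1,\ldots,V_{r+1}$ of $H$ by retaining only the classes $V_1, V_2$. First, $F$ must have at least one edge: otherwise merging $V_1$ and $V_2$ into a single color class would give a proper $r$-coloring of $H$, contradicting $\chi(H)=r+1$. Now assume for contradiction that $F$ is $2K_2$-free. Being bipartite, $F$ is triangle-free, and a standard observation then forces all edges of $F$ to share a common endpoint, i.e., the edge set of $F$ forms a star. Let $v$ denote its center, and assume $v \in V_1$ without loss of generality. Every $u \in V_1 \setminus \{v\}$ then has no neighbor in $V_2$ in $H$. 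Since $|V_1| \geq \sigma(H) \geq 2$ the set $V_1 \setminus \{v\}$ is nonempty, and I would move all of its vertices into $V_2$. The enlarged class $V_2 \cup (V_1 \setminus \{v\})$ is independent in $H$ (because $V_2$ and $V_1 \setminus \{v\}$ are each independent and no edge of $H$ joins these two sets), so the result is a proper $(r+1)$-coloring of $H$ with $\{v\}$ as a color class of size $1$. This contradicts $\sigma(H) \geq 2$.

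The main obstacle is the claim above; everything else is bookkeeping. The essential trick is that bipartiteness upgrades the hypothesis ``$F$ has no $2K_2$'' to ``$F$ is a star'', after which the color-class swapping argument directly exploits the definition of $\sigma(H)$. Combining the two steps yields an $n$-vertex $\mathcal{F}_H$-free graph with $n-1$ edges, so $\biex(n,H) \geq n-1$.
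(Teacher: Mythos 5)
Your proof is correct and follows essentially the same route as the paper: exhibit $K_{1,n-1}$, which is $2K_2$-free, and observe that $\sigma(H)\geq 2$ forces every $F\in\mathcal F_H$ to contain a matching of size two. The only difference is that the paper asserts this last claim without proof, whereas you supply the (correct) star-plus-recoloring argument justifying it.
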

\begin{proof}
We have $\sigma(H)\geq 2$. Then any $F\in\mathcal F_H$ contains a matching of size $2$.
So $K_{1,n-1}$ must be $\mathcal F_H$-free, implying that $\biex(n,H)\geq e(K_{1,n-1})=n-1$.
\end{proof}

Next we collect some properties on the counts of cliques in Tur\'an graphs $T_r(n)$.

\begin{prop}\label{prop:turan1}
For any integers $n\ge r\geq s\geq m\geq 2$, it holds that
$$\N(T_r(n), K_m)\ge \N(T_s(n), K_m) \text{~~~and~~~} \N(T_r(n), K_m)=\binom{r}{m}\left(\frac{n}{r}\right)^m+O(n^{m-1}).$$
\end{prop}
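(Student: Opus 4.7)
The plan is to dispatch the two assertions independently. The second, asymptotic formula is a routine direct count, while the first monotonicity statement is an immediate consequence of Theorem~\ref{THM: Erdos complete}.

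For the asymptotic formula, I would count cliques in $T_r(n)$ directly. Let $V_1,\dots,V_r$ be the parts of $T_r(n)$, so that $|V_i|\in\{\lfloor n/r\rfloor,\lceil n/r\rceil\}$ for every $i$. Every copy of $K_m$ in $T_r(n)$ selects $m$ distinct parts and takes one vertex from each, hence
$$\N(T_r(n),K_m)=\sum_{S\in\binom{[r]}{m}}\prod_{i\in S}|V_i|.$$
Writing $|V_i|=n/r+\varepsilon_i$ with $|\varepsilon_i|<1$ and expanding each product, one obtains $\prod_{i\in S}|V_i|=(n/r)^m+O(n^{m-1})$, where the implicit constant depends only on $r,m$. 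Summing over the $\binom{r}{m}$ sets $S$ (a constant number, since $r$ and $m$ are fixed) yields the claimed form $\binom{r}{m}(n/r)^m+O(n^{m-1})$.

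For the monotonicity $\N(T_r(n),K_m)\geqs\N(T_s(n),K_m)$, I would appeal directly to Theorem~\ref{THM: Erdos complete}. Since $s\leqs r$ we have $s+1\leqs r+1$, so $K_{s+1}\subseteq K_{r+1}$ and any $K_{s+1}$-free graph is automatically $K_{r+1}$-free; in particular $T_s(n)$ is $K_{r+1}$-free. As $r\geqs m\geqs 2$, Theorem~\ref{THM: Erdos complete} asserts that $T_r(n)$ attains the maximum of $\N(\cdot,K_m)$ among all $n$-vertex $K_{r+1}$-free graphs, and comparing this maximum with the value at $T_s(n)$ gives the desired inequality.

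There is no substantive obstacle here: the only care required is the bookkeeping of the at-most-one discrepancy between $|V_i|$ and $n/r$ in the asymptotic expansion. The proposition is essentially a preparatory fact that repackages an elementary product expansion together with Erd\H{o}s's extremal result, to be applied in the stability and counting arguments of later sections.
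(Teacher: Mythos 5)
Your proof is correct, and the paper itself states Proposition~\ref{prop:turan1} without proof, so there is no competing argument to compare against. The direct count of $K_m$-copies as $\sum_{S\in\binom{[r]}{m}}\prod_{i\in S}|V_i|$ with $|V_i|=n/r+O(1)$ gives the asymptotic formula, and observing that $T_s(n)$ is $K_{r+1}$-free (being $K_{s+1}$-free with $s\le r$) lets you invoke Theorem~\ref{THM: Erdos complete} for the monotonicity; both steps are exactly the expected ones.
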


Fix a graph $H$ and consider a graph $G$.
For each $v\in V(G)$, let $d_G(v,H)$ denote the number of copies of $H$ in $G$ containing the vertex $v$, and let $\delta(G,H)=\min_{x\in V}d_G(x,H)$.
If $H=K_m$, then we write $d_G(v,K_m)$ and $\delta(G,K_m)$ as $d_G^{(m)}(v)$ and $\delta^{(m)}(G)$, respectively.

\begin{prop}\label{prop:turan2}
	For any integers $n-1\geqs r\geqs m\geqs2$, it holds that
	\begin{eqnarray*}
	\delta^{(m)}(T_r(n))=\N(T_r(n),K_m)-\N(T_r(n-1),K_m).
	\end{eqnarray*}
\end{prop}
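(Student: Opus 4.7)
The plan is to start from the identity
\[
d^{(m)}_{T_r(n)}(v) = \N(T_r(n), K_m) - \N(T_r(n)-v, K_m),
\]
which holds for every $v\in V(T_r(n))$ since each copy of $K_m$ in $T_r(n)$ either uses $v$ or lies entirely in $T_r(n)-v$. Thus minimising the left-hand side over $v$ is equivalent to maximising $\N(T_r(n)-v,K_m)$, and crucially $T_r(n)-v$ is always itself a complete $r$-partite graph on $n-1$ vertices.

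First I would analyse the case that $v$ lies in one of the \emph{largest} parts of $T_r(n)$. Writing $n=qr+s$ with $0\leqs s<r$, the part sizes of $T_r(n)$ are $s$ copies of $q+1$ and $r-s$ copies of $q$; deleting one vertex from a $(q+1)$-part yields a complete $r$-partite graph with $s-1$ copies of $q+1$ and $r-s+1$ copies of $q$, which is exactly the Tur\'an distribution for $n-1$. Hence $T_r(n)-v\cong T_r(n-1)$ for such $v$, so
\[
\delta^{(m)}(T_r(n)) \leqs d^{(m)}_{T_r(n)}(v) = \N(T_r(n),K_m)-\N(T_r(n-1),K_m).
\]

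For the matching lower bound I would appeal to the standard fact that among all complete $r$-partite graphs on $n-1$ vertices, the Tur\'an graph $T_r(n-1)$ maximises the number of copies of $K_m$. This follows because $\N(\cdot,K_m)$ for a complete $r$-partite graph with part sizes $a_1,\ldots,a_r$ equals the elementary symmetric polynomial $e_m(a_1,\ldots,a_r)$, which is Schur-concave; equivalently, whenever $a_i\geqs a_j+2$, moving one vertex from part $i$ to part $j$ strictly increases $e_m$, so the balanced Tur\'an distribution is uniquely optimal. Applying this to $T_r(n)-v$ gives $\N(T_r(n)-v,K_m)\leqs \N(T_r(n-1),K_m)$ for every $v$, hence every vertex of $T_r(n)$ lies in at least $\N(T_r(n),K_m)-\N(T_r(n-1),K_m)$ copies of $K_m$. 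Combined with the previous paragraph, this yields the claimed equality.

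The argument is essentially bookkeeping with part-size distributions; the only step that requires a short calculation is the Schur-concavity/rebalancing inequality invoked in the last paragraph, which I would verify in one line by expanding $e_m$ as $a_ia_j\cdot e_{m-2}(\text{rest})+(a_i+a_j)\cdot e_{m-1}(\text{rest})+e_m(\text{rest})$ and observing that the replacement $(a_i,a_j)\mapsto(a_i-1,a_j+1)$ leaves $a_i+a_j$ unchanged while strictly increasing the product $a_ia_j$ when $a_i\geqs a_j+2$. This is the only potential friction point in an otherwise routine proof.
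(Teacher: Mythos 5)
Your proof is correct. Both you and the paper start from the same identity $d^{(m)}_{T_r(n)}(v)=\N(T_r(n),K_m)-\N(T_r(n)-v,K_m)$ and both reduce the claim to showing that $\N(T_r(n)-v,K_m)\le\N(T_r(n-1),K_m)$ for every vertex $v$, with equality when $v$ lies in a largest part. Where you differ is in how that inequality is justified: the paper invokes Theorem~\ref{THM: Erdos complete} (Erd\H{o}s' theorem that $T_r(n-1)$ uniquely maximizes $K_m$-counts over \emph{all} $(n-1)$-vertex $K_{r+1}$-free graphs), applied by contradiction to the graph $T_r(n)-u$ obtained by deleting a vertex from a non-largest part. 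You instead prove only the special case needed---that $T_r(n-1)$ maximizes $\N(\cdot,K_m)$ among complete $r$-partite graphs on $n-1$ vertices---by observing that $\N(K_{a_1,\dots,a_r},K_m)=e_m(a_1,\dots,a_r)$ and verifying Schur-concavity via the identity $e_m=a_ia_j\,e_{m-2}(\text{rest})+(a_i+a_j)\,e_{m-1}(\text{rest})+e_m(\text{rest})$. Your route is more elementary and self-contained (it does not presuppose Erd\H{o}s' theorem), while the paper's is shorter because Theorem~\ref{THM: Erdos complete} is already available as a black box. One small point worth making explicit in your rebalancing step: the strict increase of $e_m$ requires $e_{m-2}(\text{rest})>0$, which holds here because $r\ge m$ guarantees at least $m-2$ remaining parts, all of positive size.
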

\begin{proof}
Let $V_1,V_2,\cdots,V_r$ be the partition classes of $T_r(n)$. Then for any $v\in V_i$ with $|V_i|=\lceil\frac nr\rceil$, we have $T_r(n-1)=T_r(n)-\{v\}$ and thus
	\begin{equation*}
	d^{(m)}(v)=\N(T_r(n),K_m)-\N(T_r(n-1),K_m).
	\end{equation*}
It then suffices to show that $d^{(m)}(v)=\delta^{(m)}(T_r(n))$.
Suppose to the contrary that $d^{(m)}(v)>\delta^{(m)}(T_r(n))$ for some $v\in V_i$.
Then there exists a vertex $u\in V_j$ with $d^{(m)}(u)=\delta^{(m)}(T_r(n))<d^{(m)}(v)$.
Then we must have $|V_j|=\lfloor\frac nr\rfloor<\lceil\frac nr\rceil$.
Thus the graph $G'$ obtained from $T_r(n)$ by deleting the vertex $u$ is not $T_{r}(n-1)$.
Since $\N(G',K_m)=\N(T_r(n),K_m)-d^{(m)}(u)$, it follows that
	\begin{eqnarray*}
	\N(G',K_m)>\N(T_r(n),K_m)-d^{(m)}(v)=\N(T_r(n-1),K_m).
	\end{eqnarray*}
This contradicts Theorem \ref{THM: Erdos complete}, completing the proof.
\end{proof}

The \emph{clique number} of a graph $G$, denoted by $\omega(G)$, is the maximum size of a clique in $G$.
We will use a result due to Eckhoff \cite{Eckhoff-2004}.
\begin{thm}[\cite{Eckhoff-2004}]\label{THM: key technique}
Let $G$ be an $n$-vertex graph with the clique number $\omega:=\omega(G)\geq m\geq 2$.
Let $n_1$ and $n_2$ be the unique integers satisfying that $e(G)=e(T_\omega(n_1))+n_2$ and $0\leq n_2<\frac{\omega-1}{\omega}n_1.$
Then, $\N(G,K_m)\leqs\N(T_\omega(n_1),K_m)+\N(T_{\omega-1}(n_2),K_{m-1}).$
\end{thm}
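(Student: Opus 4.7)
For $m=2$ the inequality holds trivially with equality, since $\N(G,K_2)=e(G)=e(T_\omega(n_1))+n_2$ and $\N(T_{\omega-1}(n_2),K_1)=n_2$. For $m\geqs 3$ I would use double induction on $\omega$ and on $e(G)$, based on the identity
$$\N(G,K_m)=\N(G-v,K_m)+\N\bigl(G[N_G(v)],K_{m-1}\bigr)$$
valid for any vertex $v$. The subgraph $G[N_G(v)]$ has clique number at most $\omega-1$, because $v$ is joined to every vertex of $N_G(v)$ and $\omega(G)\leqs \omega$, so the induction on $\omega$ applies to it; meanwhile $G-v$ has strictly fewer edges than $G$, so the induction on $e(G)$ applies to it as well.

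The tactical choice is \emph{which} vertex $v$ to remove. The most natural option is a vertex minimizing $d^{(m)}(v)=\N(G[N_G(v)],K_{m-1})$, so that the inductive contribution from the neighborhood is as small as possible; a minimum-degree vertex is a close alternative and may be preferable when $n_2$ is small. Writing the canonical Eckhoff decompositions $e(G-v)=e(T_\omega(n_1'))+n_2'$ and $e(G[N_G(v)])=e(T_{\omega-1}(\tilde{n}_1))+\tilde{n}_2$, the task reduces to the purely numerical Tur\'an-counts inequality
$$\N(T_\omega(n_1'),K_m)+\N(T_{\omega-1}(n_2'),K_{m-1})+\N(T_{\omega-1}(\tilde{n}_1),K_{m-1})+\N(T_{\omega-2}(\tilde{n}_2),K_{m-2})\leqs \N(T_\omega(n_1),K_m)+\N(T_{\omega-1}(n_2),K_{m-1}),$$
linked to the hypothesis by $e(G)=e(T_\omega(n_1))+n_2$ and by the relation $e(G-v)+d(v)=e(G)$, $e(G[N_G(v)])\leqs \binom{d(v)}{2}$. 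The tight case should identify the extremal $G$ as $T_\omega(n_1)$ together with one extra vertex attached to a balanced $T_{\omega-1}(n_2)$-shaped subset of $\omega-1$ of its parts.

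\textbf{Main obstacle.} The delicate step is verifying the above Tur\'an-counts inequality, which encodes a certain convexity of the map $N\mapsto \N(T_r(N),K_j)$ together with the interaction between different values of $r$. Proposition~\ref{prop:turan1} (monotonicity and the $\binom{r}{j}(N/r)^j+O(N^{j-1})$ expansion) and Proposition~\ref{prop:turan2} (the minimum-degree identity $\delta^{(m)}(T_r(N))=\N(T_r(N),K_m)-\N(T_r(N-1),K_m)$) are the natural tools, but a careful case split on the regime of $(n_1,n_2)$ seems unavoidable; in particular, the boundary transition where decrementing $e(G)$ switches $(n_1,n_2)\to(n_1-1,n_2')$ with $n_2'$ jumping from $0$ to $\delta^{(2)}(T_\omega(n_1))-1$ needs separate treatment. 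An alternative route, possibly cleaner, is a Zykov-type symmetrization: show directly that any extremal $G$ has the claimed structure and then compute $\N(G,K_m)$ by inspection; this sidesteps the bookkeeping but introduces the separate difficulty of preserving both $e(G)$ and $\omega(G)$ through the symmetrization moves.
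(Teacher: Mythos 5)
The paper does not prove this statement; it is cited directly from Eckhoff \cite{Eckhoff-2004} and used as a black box (the only accompanying remark in the paper is the observation that $n_1\leq n$). So there is no in-paper proof to compare against.

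As for your proposal on its own terms: it is a plan, not a proof. The base case $m=2$ is fine and does hold with equality. The recursion $\N(G,K_m)=\N(G-v,K_m)+\N(G[N_G(v)],K_{m-1})$ is correct, and the clique-number drop $\omega(G[N_G(v)])\leqs\omega-1$ is the right structural observation to drive an induction on $\omega$. But you have explicitly deferred the entire content of the theorem into the displayed ``purely numerical Tur\'an-counts inequality'' and then declared it the main obstacle without attempting to verify it; a proof that names the hard inequality and stops is not a proof. There are also unaddressed side issues you wave past: the case $\omega(G-v)<\omega$ forces the inductive bound on $G-v$ to come from a smaller clique number, which changes the shape of the decomposition $(n_1',n_2')$ and must be reconciled with the target bound (monotonicity in $r$ from Proposition~\ref{prop:turan1} helps but does not obviously close it); the constraint $e(G[N_G(v)])\leqs\binom{d(v)}{2}$ is mentioned but never used to pin down $(\tilde n_1,\tilde n_2)$; and the choice of $v$ (minimizing $d^{(m)}(v)$ versus minimizing degree) is left floating even though the numerical inequality you would need to verify depends on that choice. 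Until the case analysis in $(n_1,n_2)$, including the boundary transitions where $n_2$ wraps around, is actually carried out, the argument has a genuine gap at its central step.
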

\noindent Note that in the setting we have $n_1\leq n$. To see this, we notice that as $G$ is $K_{\omega+1}$-free, it follows by $e(T_\omega(n_1))\le e(G)\le e(T_\omega(n))$.

The following structural stability theorem was originally proved by Erd\H{o}s and Simonovits \cite{Erd66,Erdos-66,ES66,Simonovits-66}
(also see F\"uredi \cite{Furedi-2015} for a new proof in the case of $H$ being cliques).
%gives the rough structure of the extremal graphs for a graph $H$ with $\chi(H)=r+1\geqs3$.
%Roughly speaking, if the number of edges in an $H$-free graph $G$ is closed to $e(T_r(n))$, then $G$ is close to $T_r(n)$.

\begin{thm}[Erd\H{o}s-Simonovits Stability Theorem]\label{THM:simonovits stability}
Let $H$ be a graph with $\chi(H)=r+1\geqs3$. Then, for every $\varepsilon>0$, there exist $\delta=\delta(H,\varepsilon)>0$ and $n_0=n_0(H,\varepsilon)\in\mathbb{N}$
such that the following holds. If $G$ is an $H$-free graph on $n\geqs{n_0}$ vertices with $e(G)\geq e(T_r(n))-\delta{n^2}$,
then there exists a partition of $V(G)=V_1\dot{\cup}\cdots\dot{\cup}V_{r}$ such that $\sum^{r}_{i=1}e(V_i)<\varepsilon n^2/2$.
Therefore, $G$ can be obtained from $T_r(n)$ by adding and deleting a set of at most $\varepsilon n^2$ edges.
\end{thm}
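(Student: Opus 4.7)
The plan is to derive the structural stability from the density-level stability supplied by the Erd\H{o}s-Stone-Simonovits theorem, through an iterative cleaning step that boosts the minimum degree and an embedding argument that converts intra-part edges into a forbidden copy of $H$.

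First, Erd\H{o}s-Stone-Simonovits applied to the $H$-free graph $G$ pins $e(G)$ into the narrow range $(1-1/r)n^2/2\pm O(\delta n^2)$. I would then iteratively delete any vertex whose degree in the current graph is smaller than $(1-1/r-\eta)|V|$, where $\eta=\eta(\delta)\to 0$ as $\delta\to 0$. Since each such deletion strips off far fewer edges than the current average, the procedure terminates after $o(n)$ steps, leaving an $H$-free subgraph $G'$ on $n'=n-o(n)$ vertices with minimum degree $\geqs(1-1/r-\eta)n'$.

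Next, I would run an embedding argument on $G'$. By the Erd\H{o}s-Stone theorem, $G'$ contains a balanced complete $r$-partite subgraph $F$ with each part of size $t\gg|V(H)|$. For each $v\in V(G')\setminus V(F)$, let $R(v)\subseteq[r]$ record the part-indices where $v$ has few non-neighbors. The large minimum degree of $G'$ forces $|R(v)|\geqs r-1$; and if some $v$ had $|R(v)|=r$, then $F\cup\{v\}$ would furnish $r+1$ ``good'' sides into which an $(r+1)$-coloring of $H$ could be greedily embedded, contradicting $H$-freeness. Thus each such $v$ misses exactly one index, giving a partition $V(G')=V_1\dot{\cup}\cdots\dot{\cup}V_r$; extending arbitrarily over the deleted vertices yields a partition of $V(G)$.

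Finally, I would bound $\sum_i e(V_i)$. If this sum were $\geqs\varepsilon n^2/2$, some $V_i$ would contain many intra-part edges; combining these with the large common neighborhoods of their endpoints in the other $r-1$ parts of $F$ would produce, via a blow-up embedding, a copy of $H$ inside $G$, contradicting $H$-freeness. So $\sum_i e(V_i)<\varepsilon n^2/2$, and the second conclusion follows by modifying at most $\varepsilon n^2$ edges to turn $G$ into $T_r(n)$. The main obstacle will be calibrating $\delta,\eta,t$ and the various Erd\H{o}s-Stone constants so that every embedding in the last two steps has enough room; I expect that bookkeeping to be the most delicate part of the proof.
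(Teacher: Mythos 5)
The paper does not prove Theorem \ref{THM:simonovits stability} itself; it invokes the classical Erd\H{o}s--Simonovits stability theorem by reference (to \cite{Erd66,Erdos-66,ES66,Simonovits-66}, with F\"uredi \cite{Furedi-2015} cited for a modern proof in the clique case). So there is no internal proof to compare against, and the question is whether your sketch stands on its own.

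There is a genuine gap in the step that produces the partition. You let $F$ be a copy of $K_r(t)$ obtained from Erd\H{o}s--Stone, with parts of constant size $t$, and then claim that the minimum-degree bound $\delta(G')\ge(1-1/r-\eta)n'$ forces every $v\notin V(F)$ to have ``few non-neighbors'' in all but at most one part of $F$. This does not follow: the minimum-degree condition constrains $|N(v)|$ globally, not $|N(v)\cap V(F)|$. Since $|V(F)|=rt=o(n')$, a vertex can have arbitrarily bad adjacency to $F$ while still having near-maximum degree in $G'$. Concretely, start from $T_r(n)$, fix $F\subseteq T_r(n)$, pick any $v\notin V(F)$, and delete all $\le rt$ edges from $v$ into $F$: the resulting graph is still $H$-free, still has nearly extremal edge count and essentially unchanged minimum degree, and survives your cleaning step, yet $R(v)=\emptyset$. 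So ``$|R(v)|\ge r-1$'' is simply not implied by what you have established, and the partition you build from $R(\cdot)$ is ill-defined on such vertices.

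The standard fixes are genuinely different in mechanism. In the clique case $H=K_{r+1}$ one takes $F$ to be a single $K_r$ on vertices $w_1,\dots,w_r$ (not a blow-up), defines $V_i=\{v:vw_i\notin E\}$, and the minimum degree bound $\delta(G')>(1-1/r-\eta)n'$ now \emph{does} force every vertex to be non-adjacent to at least one $w_i$ (else $K_{r+1}$) and, by counting, to at most one; $F$ is only $r$ vertices, so the degree bound actually bites. Intra-part edges then directly yield a $K_{r+1}$. For general $H$ with $\chi(H)=r+1$ this argument does not transfer verbatim because $G'$ may contain $K_{r+1}$, and the usual route is to first show $G'$ has $o(n^{r+1})$ copies of $K_{r+1}$ (otherwise supersaturation gives $K_{r+1}(|V(H)|)\supseteq H$), delete $o(n^2)$ edges to destroy them, apply the clique stability to the residue, and transfer the partition back. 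Your plan is missing this reduction, and without it the ``one balanced $K_r(t)$ controls everyone'' step fails. A secondary, more repairable issue: the assertion that the iterative deletion terminates after $o(n)$ steps needs the standard double-counting of deleted edges against the $H$-free upper bound on the shrunken graph; it is true, but not for the reason stated (``far fewer edges than the current average'' does not by itself bound the number of rounds).
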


A classical result of Andr\'asfai, Erd\H{o}s and S\'os \cite{Andrasfai-Erdos-Sos-74} asserts that a $K_{r+1}$-free graph with large minimum degree must be $r$-partite.

\begin{thm}[\cite{Andrasfai-Erdos-Sos-74}]\label{LEM: Andrasfai-Erdos-Sos}
Let $n>r\geq 2$. If $G$ is a $K_{r+1}$-free graph on $n$ vertices with $\delta(G)>\frac{3r-4}{3r-1}n$, then $G$ is $r$-partite.
\end{thm}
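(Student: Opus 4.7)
My plan is to prove this classical theorem by induction on $r$, treating the base case $r=2$ via a shortest-odd-cycle argument and the inductive step by analyzing the neighborhood of a minimum-degree vertex.

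For the base case $r=2$, I would assume $G$ is triangle-free with $\delta(G)>2n/5$ and suppose for contradiction that $G$ is not bipartite. Taking a shortest odd cycle $C$ in $G$, the triangle-freeness forces $|V(C)|\geqs5$. Every vertex outside $C$ has at most two neighbors on $C$: three such neighbors would produce either a triangle (if two are adjacent along $C$) or a shorter odd cycle via a parity argument on the arcs between them. Double-counting edges incident to $V(C)$ then yields $|V(C)|\cdot\delta(G)\leqs 2|E(C)|+2(n-|V(C)|)=2n$, which together with $|V(C)|\geqs5$ contradicts $\delta(G)>2n/5$.

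For the inductive step, assume the theorem for $r-1$ and let $G$ be $K_{r+1}$-free with $\delta(G)>\tfrac{3r-4}{3r-1}n$. I would pick a vertex $v$ achieving the minimum degree $\delta=\delta(G)$ and consider $H=G[N(v)]$. Then $H$ is $K_r$-free on $|V(H)|=\delta$ vertices, and for any $u\in N(v)$ we have $|N_H(u)|=|N(u)\cap N(v)|\geqs 2\delta-n$. A short computation shows $(2\delta-n)/\delta>\tfrac{3r-7}{3r-4}$ is equivalent to $\delta>\tfrac{3r-4}{3r-1}n$, which is exactly our hypothesis; therefore $\delta(H)>\tfrac{3(r-1)-4}{3(r-1)-1}|V(H)|$, and the induction hypothesis yields a proper $(r-1)$-coloring of $H$. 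I would then extend this coloring to a proper $r$-coloring of $G$ by placing $v$ together with the non-neighbor set $U=V(G)\setminus(N(v)\cup\{v\})$ into a single new color class.

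The main obstacle is ensuring that $\{v\}\cup U$ is actually an independent set of $G$. Suppose for contradiction $w_1w_2\in E(G)$ with $w_1,w_2\in U$. One has $|U|<\tfrac{3n}{3r-1}$ by the degree hypothesis, while $|N(w_1)\cap N(w_2)|>\tfrac{3r-7}{3r-1}n$ and $G[N(w_1)\cap N(w_2)]$ is $K_{r-1}$-free by $K_{r+1}$-freeness. To extract a contradiction one must produce a $K_{r-1}$ inside this common neighborhood (which together with $w_1,w_2$ would give a $K_{r+1}$) or, more subtly, combine the extra structural witness $v\in \bar N(w_1)\cap \bar N(w_2)$ with an iterated common-neighbor argument. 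Matching the sharp threshold $\tfrac{3r-4}{3r-1}$ throughout this extension step, rather than merely forbidding one bad edge, is the technical heart of the proof, and closing it in general requires a recursive descent that bottoms out in the triangle-free base case treated above.
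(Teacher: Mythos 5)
The paper does not prove this theorem; it cites it from Andr\'asfai, Erd\H{o}s and S\'os \cite{Andrasfai-Erdos-Sos-74}, so there is no in-paper proof to compare against. I therefore evaluate your proposal on its own merits.

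Your base case $r=2$ is correct: a shortest odd cycle $C$ in a triangle-free non-bipartite graph has length at least $5$ and no chords, every vertex off $C$ has at most two neighbours on $C$ (the parity argument on arcs is standard and works), and double-counting gives $|V(C)|\cdot\delta(G)\le 2n$, contradicting $\delta(G)>2n/5$. The arithmetic in the inductive reduction is also right: if $v$ has minimum degree $\delta$ and $H=G[N(v)]$, then $H$ is $K_r$-free, $\delta(H)\ge 2\delta-n$, and $\delta>\tfrac{3r-4}{3r-1}n$ is exactly equivalent to $2\delta-n>\tfrac{3r-7}{3r-4}\delta$, so the induction hypothesis does give an $(r-1)$-colouring of $H$.

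The gap is the extension step, and it is not merely a technical detail you have left open --- the strategy itself fails. You want $\{v\}\cup U$, where $U=V(G)\setminus(N(v)\cup\{v\})$, to be an independent set and hence the $r$-th colour class. But $U$ need not be independent even when $G$ genuinely is $r$-partite: if $G$ has parts $P_1,\dots,P_r$ and $v\in P_1$, then $U$ contains not only $P_1\setminus\{v\}$ but also all vertices of $P_2\cup\cdots\cup P_r$ that are non-neighbours of $v$, and these can easily be joined to vertices of $P_1$. So an edge inside $U$ is not a contradiction; it just means this particular vertex $v$ does not hand you a partition for free, and there is nothing to ``derive a contradiction'' from. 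Consequently the common-neighbourhood descent you sketch in the final paragraph cannot bottom out as described --- you would need a different mechanism (e.g.\ basing the colouring on a maximum clique and analysing the classes of vertices by how many clique vertices they see, or working with the structure of the extremal blow-up of $C_5$-type graphs), not a refinement of the minimum-degree-vertex decomposition. As written, the inductive step is a wrong approach, not a proof with a missing lemma, and you yourself flag that you cannot close it.
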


%\vspace{0.12in}
We need the celebrated Szemer\'edi's regularity lemma \cite{Szemeredi-78}.
Let $X,Y$ be disjoint subsets in a graph $G$.
By $G[X,Y]$ we denote the bipartite subgraph of $G$ consisting of all edges that has one endpoint in $X$ and another in $Y$; let $e_G(X,Y)$ (respectively, $e_G(X)$) be the number of edges in $G[X,Y]$ (respectively, in $G[X]$).
For mutually disjoint $V_1,\cdots,V_r\subseteq V(G)$, similarly we define $G[V_1,\cdots,V_r]$ to be the $r$-partite subgraph of $G$
consisting of all edges in $\cup_{1\leqs i<j\leqs r}E(G[V_i,V_j])$.
The subscripts will be dropped if there is no confusion.
The \emph{density} of the pair $(X,Y)$ is defined by $d(X,Y):=e_G(X,Y)/|X||Y|$.
The pair $(X,Y)$ is called $\varepsilon$-\emph{regular} if $|d(X,Y)-d(A,B)|<\varepsilon$ for all $A\subseteq X$ and $B\subseteq Y$ with $|A|\geq\varepsilon|X|$ and $|B|\geqs\varepsilon|Y|$.
A partition $V_0,\cdots,V_k$ of $V$ is $\varepsilon$-\emph{regular}, if
$|V_0|\leqs\varepsilon|V|$, $|V_1|=\cdots=|V_k|$, and all but at most $\varepsilon k^2$ of pairs $(V_i,V_j)$ with $1\leqs i<j\leqs k$ are $\varepsilon$-regular.

\begin{thm}[Regularity Lemma]\label{THM:regularity lemma}
	For every $\varepsilon>0$, there exists $M=M(\varepsilon)$ such that every graph of order at least $\varepsilon^{-1}$ admits an $\varepsilon$-regular partition $\{V_0,\cdots,V_k\}$ with $\varepsilon^{-1}\leqs k\leqs M$.
\end{thm}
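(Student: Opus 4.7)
The plan is to follow Szemer\'edi's original energy-increment argument. For a partition $P=\{V_1,\ldots,V_k\}$ of $V(G)$ (ignoring an exceptional class $V_0$ for the moment), define the \emph{mean-square density} (or \emph{energy})
$$q(P)=\sum_{1\le i,j\le k}\frac{|V_i||V_j|}{n^2}\,d(V_i,V_j)^2,$$
where $n=|V(G)|$. Clearly $0\le q(P)\le 1$, and the whole proof is driven by monotonicity and an increment property of $q$.

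First I would establish two elementary facts. \textbf{(i) Monotonicity:} if a partition $P'$ refines $P$, then $q(P')\ge q(P)$. This is Jensen's inequality on each pair $(V_i,V_j)$, since the density $d(V_i,V_j)$ equals the weighted average of the densities over the refined sub-pairs. \textbf{(ii) Energy increment:} if $P$ fails to be $\varepsilon$-regular, then there exists a refinement $P^\ast$ of $P$, in which each $V_i$ is split into at most $2^k$ pieces, such that $q(P^\ast)\ge q(P)+\varepsilon^5$. To construct $P^\ast$, one uses for each irregular pair $(V_i,V_j)$ the witnessing subsets $A_{ij}\subseteq V_i$ and $B_{ij}\subseteq V_j$, and refines each $V_i$ according to the Boolean algebra generated by all the $A_{ij}$'s. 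A direct calculation via the defect form of Cauchy--Schwarz yields a gain of at least $\varepsilon^4\cdot|V_i||V_j|/n^2$ on each of the at least $\varepsilon k^2$ irregular pairs, summing to the required $\varepsilon^5$ increment.

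With these two facts in hand, the proof is an iteration. Start from an arbitrary equitable partition with about $\lceil\varepsilon^{-1}\rceil$ classes and $V_0=\emptyset$. If the current partition is $\varepsilon$-regular, we are done. Otherwise apply the increment lemma to produce a refinement whose energy is higher by at least $\varepsilon^5$. Since $q(P)\le 1$, this can occur at most $\varepsilon^{-5}$ times. Each refinement multiplies the number of classes by at most $2^k$, so the final value of $k$ is bounded by an iterated exponential (a tower function) of $\varepsilon^{-1}$; this tower bound is the function $M=M(\varepsilon)$.

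The main obstacle is bookkeeping equitability. The conclusion demands $|V_1|=\cdots=|V_k|$ together with $|V_0|\le\varepsilon|V(G)|$, whereas the Boolean-refinement step produces classes of wildly unequal sizes. One resolves this by further cutting each refined class into blocks of a common small size and moving the leftover vertices into $V_0$; one then checks that this equitablization loses only a negligible amount of energy (much smaller than $\varepsilon^5$, hence absorbed into the next increment step), and that across all at most $\varepsilon^{-5}$ rounds the exceptional set absorbs at most $\varepsilon n$ vertices in total. This accounting is routine, if tedious, once the energy-increment lemma is in place.
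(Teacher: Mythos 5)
This statement is Szemer\'edi's Regularity Lemma, which the paper does not prove at all: it is quoted as Theorem \ref{THM:regularity lemma} with a citation to Szemer\'edi \cite{Szemeredi-78} and used as a black box. Your outline is the standard energy-increment proof (mean-square density, monotonicity under refinement via Jensen/Cauchy--Schwarz, an increment of order $\varepsilon^5$ from the witness sets of irregular pairs, at most $\varepsilon^{-5}$ iterations giving a tower-type bound $M(\varepsilon)$, plus re-equitablization with leftovers pushed into $V_0$), and it is correct in structure; the only place where real care is needed, as you acknowledge, is the equitablization bookkeeping, where one must verify that cutting into equal blocks costs less energy than the increment gained (so the effective increment is, say, $\varepsilon^5/2$) and that the exceptional class grows by at most roughly $n/2^{k}$ per round so that $|V_0|\leq \varepsilon n$ over all rounds; also note that the lower bound $k\geq \varepsilon^{-1}$ is guaranteed simply because you start with about $\varepsilon^{-1}$ classes and refinements never decrease $k$.
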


For a real $d\in (0,1]$, an $\varepsilon$-regular pair $(X,Y)$ is called {\it $(\varepsilon,d)$-regular} if the density $d(X,Y)\ge d$.
Given an $\varepsilon$-regular partition $\{V_0,\cdots,V_k\}$ of a graph $G$,
the $(\varepsilon,d)$-\emph{cluster graph} is a graph $R$ with the vertex set $V(R)=[k]$ and with edges $ij\in E(R)$ if and only if $(V_i,V_j)$ is an $(\varepsilon,d)$-regular pair.
For an integer $s\geq 1$, the $s$-\emph{blowup} of $G$, denoted by $G(s)$, is the graph obtained from $G$ by
replacing every vertex $v\in V(G)$ with an independent set $I_v$ of size $s$ and replacing every edge $uv\in E(G)$ with the complete bipartite graph between $I_u$ and $I_v$.
Let $\Delta(G)$ be the maximum degree of $G$.

\begin{thm}[Embedding Lemma; see \cite{Diestel-3rd}]\label{LEM: Cluster blowup}
For all $d\in(0,1]$ and $\Delta\geqs1$ there exists a $\gamma_0>0$ with the following property.
If a graph $G$ has a $\gamma$-regular partition $\{V_0,\cdots,V_k\}$ with $|V_1|=\cdots=|V_k|=\ell$ and the $(\gamma,d)$-cluster graph $R$,
where $\gamma\leqs\gamma_0$ and $\ell d^\Delta\geqs2s$ for some integer $s\geq 1$,
then any subgraph $H$ of the $s$-blowup of $R$ with $\Delta(H)\leq\Delta$ is also a subgraph of $G$.
\end{thm}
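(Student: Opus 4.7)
The plan is the standard greedy-embedding proof with \emph{candidate sets}. Since $H$ is a subgraph of the $s$-blowup of $R$, fix an assignment $\phi:V(H)\to V(R)$ with $|\phi^{-1}(i)|\leqs s$ for every $i\in[k]$ and $\phi(u)\phi(v)\in E(R)$ whenever $uv\in E(H)$; in particular, for every $H$-edge $uv$ the cluster pair $(V_{\phi(u)},V_{\phi(v)})$ is $(\gamma,d)$-regular. Order $V(H)=\{u_1,\dots,u_h\}$ arbitrarily and build an embedding $f:V(H)\to V(G)$ one vertex at a time, so that $f(u_i)\in V_{\phi(u_i)}$ and $f(u_i)f(u_j)\in E(G)$ whenever $u_iu_j\in E(H)$.

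At each stage I would maintain, for every not-yet-embedded $v\in V(H)$, a candidate set $C_v\subseteq V_{\phi(v)}$, initialized to $C_v:=V_{\phi(v)}$. When embedding $u_i$, choose $f(u_i)\in C_{u_i}$ subject to (a) avoiding the at most $s-1$ previously used vertices of $V_{\phi(u_i)}$, and (b) not being \emph{bad} for any not-yet-embedded $H$-neighbor $v$ of $u_i$, where $w\in V_{\phi(u_i)}$ is bad for $v$ if $|N_G(w)\cap C_v|<(d-\gamma)|C_v|$; then set $C_v:=C_v\cap N_G(f(u_i))$ for each such $v$. The invariant is $|C_v|\geqs(d-\gamma)^{\delta_v}\ell$, where $\delta_v$ counts the already-embedded $H$-neighbors of $v$; this holds initially and is preserved by the update rule, since bad vertices were excluded. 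Provided $\gamma_0$ satisfies $(d-\gamma_0)^\Delta\geqs\gamma_0$, we have $|C_v|\geqs\gamma\ell$ throughout, which is exactly the threshold needed to invoke $(\gamma,d)$-regularity on $(V_{\phi(u_i)},V_{\phi(v)})$.

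Regularity then gives at most $\gamma\ell$ bad vertices in $V_{\phi(u_i)}$ for each of the at most $\Delta$ not-yet-embedded neighbors of $u_i$, so the number of valid choices for $f(u_i)$ is at least
\[
|C_{u_i}|-(s-1)-\Delta\gamma\ell\geqs(d-\gamma)^{\Delta}\ell-s-\Delta\gamma\ell.
\]
Choosing $\gamma_0=\gamma_0(d,\Delta)$ small enough that $(d-\gamma_0)^\Delta\geqs\tfrac34 d^\Delta$ and $\Delta\gamma_0\leqs\tfrac18 d^\Delta$, this is at least $\tfrac58 d^\Delta\ell-s\geqs\tfrac{s}{4}>0$ by the hypothesis $\ell d^\Delta\geqs2s$, so at least one valid choice exists and the induction goes through, producing the desired copy of $H$ in $G$. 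I expect the main technical point to be the calibration of $\gamma_0$: it must be small enough that the accumulated shrinkage $(d-\gamma)^\Delta\ell$ of candidate sets still exceeds $\gamma\ell$ (so regularity keeps applying), while the total badness $\Delta\gamma\ell$ together with the $s-1$ used vertices does not swamp $|C_{u_i}|$. The hypothesis $\ell d^\Delta\geqs 2s$ is precisely what makes such a choice possible.
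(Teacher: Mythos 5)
Your proposal is correct, and it is essentially the standard greedy-embedding-with-candidate-sets argument that the paper itself simply cites from Diestel (Lemma~\ref{LEM: Cluster blowup} is stated as a black box with reference \cite{Diestel-3rd} and no in-paper proof). The invariant $|C_v|\geq(d-\gamma)^{\delta_v}\ell$, the bad-vertex count $\Delta\gamma\ell$ via regularity, the deduction of $s-1$ used vertices, and the calibration of $\gamma_0$ so that $(d-\gamma_0)^\Delta\geq\gamma_0$ keeps regularity applicable and $\ell d^\Delta\geq 2s$ leaves at least one valid choice are all exactly the textbook proof, so there is nothing to compare beyond noting you have correctly reconstructed it.
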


\section{A stability result on the number of cliques}
In this section we prove Theorem \ref{THM: partition}, which is restated as the following.
\begin{thm}\label{THM: partition2}
For any $\varepsilon>0$, integers $r\geq m\geq 2$ and a fixed graph $H$ with $\chi(H)=r+1$, there exist $\delta=\delta(H,\varepsilon)>0$ and $n_0=n_0(H,\varepsilon)\in\mathbb N$ such that the following holds.
Let $G$ be an $H$-free graph on $n\geq n_0$ vertices with $\N(G,K_m)\geq\N(T_r(n),K_m)-\delta n^m$.
Then $G$ can be obtained from $T_r(n)$ by adding and deleting a set of at most $\varepsilon n^2$ edges.
\end{thm}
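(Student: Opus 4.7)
The plan is to first show that the hypothesis forces $e(G) \ge e(T_r(n)) - o(n^2)$, and then to apply Theorem~\ref{THM:simonovits stability} directly to the $H$-free graph $G$ to conclude.

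To prove the edge bound, apply Szemer\'edi's regularity lemma (Theorem~\ref{THM:regularity lemma}) to $G$ with a small parameter $\gamma$, obtaining a partition $\{V_0, V_1, \ldots, V_k\}$ with $|V_i| = \ell$ for $i \ge 1$, and form the $(\gamma, d)$-cluster graph $R$ for a suitably small $d > 0$. Since $\chi(H) = r+1$, the graph $H$ embeds into some blowup of $K_{r+1}$, so by the Embedding Lemma (Theorem~\ref{LEM: Cluster blowup}) a $K_{r+1}$ in $R$ would force $H \subseteq G$; hence $R$ is $K_{r+1}$-free. A standard counting, splitting $K_m$-copies of $G$ into ``good'' ones (vertices in $m$ distinct clusters outside $V_0$, every pair in $E(R)$) and ``bad'' ones, gives
\[
\N(G, K_m) \le \N(R, K_m)\, \ell^m + o(n^m),
\]
where the error absorbs contributions from $V_0$, intra-cluster pairs, non-regular pairs, and low-density regular pairs. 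Combined with the hypothesis this yields $\N(R, K_m) \ge \N(T_r(k), K_m) - o(k^m)$. Theorem~\ref{THM: Erdos complete} rules out $\omega(R) \le r-1$ (since $\N(T_{r-1}(k), K_m)$ is smaller by $\Omega(k^m)$), so $\omega(R) = r$. Eckhoff's theorem (Theorem~\ref{THM: key technique}), writing $e(R) = e(T_r(k_1)) + k_2$, then forces $k_1 \ge k(1-o(1))$ and hence $e(R) \ge e(T_r(k)) - o(k^2)$.

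The $K_{r+1}$-free graph $R$ now satisfies the hypothesis of Theorem~\ref{THM:simonovits stability}, giving a partition $V(R) = U_1 \dot{\cup} \cdots \dot{\cup} U_r$ with $\sum_j e(R[U_j]) < \varepsilon'' k^2/2$. Lifting to $G$ via $W_j := \bigcup_{i \in U_j} V_i$ (distributing $V_0$ arbitrarily), the usual estimates give $\sum_j e_G(W_j) = o(n^2)$. Re-applying the hypothesis: since the inner edges contribute only $o(n^m)$ copies of $K_m$,
\[
\sum_{S \in \binom{[r]}{m}} \prod_{j \in S} |W_j| \ge \N(G[W_1, \ldots, W_r], K_m) \ge \N(G, K_m) - o(n^m) \ge \binom{r}{m}(n/r)^m - o(n^m).
\]
Since the quantity $\sum_S \prod_{j \in S} n_j$ (subject to $\sum n_j = n$) is uniquely maximized at $n_j = n/r$ with a quadratic gap in the deviation, this forces $\sum_j (|W_j| - n/r)^2 = o(n^2)$. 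A parallel argument, using that in the balanced setting each missing cross-edge destroys $\Omega(n^{m-2})$ copies of $K_m$, bounds the number of missing cross-edges by $o(n^2)$. Consequently $e(G) \ge \binom{n}{2} - \sum_j \binom{|W_j|}{2} - o(n^2) \ge e(T_r(n)) - o(n^2)$, and Theorem~\ref{THM:simonovits stability} applied to $G$ completes the proof.

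The main obstacle is keeping all error terms small simultaneously: the parameters $\gamma, d, 1/k$ from regularity and $\varepsilon''$ from the stability of $R$ must be chosen in a coordinated way so that the final edge-count error is $o(n^2)$ of any prescribed size; the balance step additionally requires a quantitative (stability) version of the inequality $\sum_{S \in \binom{[r]}{m}} \prod_{j \in S} n_j \le \binom{r}{m}(n/r)^m$.
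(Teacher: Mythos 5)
Your proposal is correct and follows the paper's high-level strategy essentially exactly: apply the regularity lemma, form the $(\gamma,d)$-cluster graph $R$, show $R$ is $K_{r+1}$-free via the Embedding Lemma, use a $K_m$-transfer count together with Eckhoff's theorem to deduce $e(R) \ge e(T_r(k)) - o(k^2)$, apply Erd\H{o}s--Simonovits stability to $R$, lift the resulting partition to $V(G)$ so that the intra-part edges are $o(n^2)$, show $e(G) \ge e(T_r(n)) - o(n^2)$, and finally invoke stability on $G$ itself. The one place you diverge from the paper is the penultimate step, going from ``the lifted partition has $o(n^2)$ intra-part edges'' to ``$e(G) \ge e(T_r(n)) - o(n^2)$.'' The paper isolates this as Lemma~\ref{LEM:InEdges} and proves it in one stroke by applying Eckhoff's theorem (Theorem~\ref{THM: key technique}) a second time, now to the $r$-partite subgraph $G[V_1,\dots,V_r]$: the clique count of that subgraph is still $\N(T_r(n),K_m)-o(n^m)$, and Eckhoff immediately forces its edge count to be within $o(n^2)$ of $e(T_r(n))$. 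Your argument replaces this with a two-stage elementary count --- stability of $\sum_{S}\prod_{j\in S}|W_j|$ under the constraint $\sum_j|W_j|=n$ to get approximate balance, then a double-counting argument (each missing cross-edge kills $\Omega(n^{m-2})$ potential $K_m$'s, each $K_m$ has at most $\binom{m}{2}$ edges) to bound missing cross-edges by $o(n^2)$. Both routes are valid; the paper's is shorter because it reuses Eckhoff, while yours is more self-contained but, as you flag, requires a quantitative stability statement for the elementary symmetric polynomial inequality (which does hold, via a second-order expansion around the balanced point on the simplex). Net: same proof with an honest variant in one sub-lemma.
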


We first establish a lemma, which says that it will be enough to find a partition of $V(G)$ into $r$ parts such that the number of edges contained in a part is at most $o(n^2)$.

\begin{lem}\label{LEM:InEdges}
Let $H, G$ be from Theorem \ref{THM: partition2} and $\varepsilon\gg\eta \gg \delta\gg 1/n_0$.\footnote{Throughout this paper, the notation $\varepsilon_1\gg \varepsilon_2$ simply means that $\varepsilon_2$ is a sufficiently small function of $\varepsilon_1$ which is needed to satisfy some inequalities in the proof.}
If $V_1, \cdots, V_r$ is a partition of $V(G)$ with $\sum_{i=1}^r e(V_i)< \eta n^2$,
then $e(G[V_1,\cdots,V_r])>e(T_r(n))-\varepsilon n^2$.
\end{lem}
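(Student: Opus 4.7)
Let $G' := G[V_1,\ldots,V_r]$ be the $r$-partite subgraph of $G$ cut out by the given partition. The natural idea is to first transfer the hypothesis from $G$ to $G'$ and then use Eckhoff's theorem (Theorem~\ref{THM: key technique}) to convert the clique count back into an edge count. For the transfer, every copy of $K_m$ in $G \setminus G'$ must use at least one of the $<\eta n^2$ intra-part edges, and each edge lies in at most $\binom{n-2}{m-2} < n^{m-2}$ copies of $K_m$. Hence
\[
\N(G',K_m)\ \geq\ \N(G,K_m)-\eta n^m\ \geq\ \N(T_r(n),K_m)-2\eta n^m,
\]
using $\delta\ll\eta$.

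Since $G'$ is $r$-partite it is $K_{r+1}$-free, so $\omega:=\omega(G')\leq r$; and since $\N(G',K_m)>0$ (we have $\N(T_r(n),K_m)=\Omega(n^m)$ by Proposition~\ref{prop:turan1} and $\eta$ is small), we have $\omega\geq m$. Thus Theorem~\ref{THM: key technique} applies to $G'$: writing $e(G')=e(T_\omega(n_1))+n_2$ with $n_1\leq n$ and $0\leq n_2<\frac{\omega-1}{\omega}n_1$,
\[
\N(G',K_m)\ \leq\ \N(T_\omega(n_1),K_m)+\N(T_{\omega-1}(n_2),K_{m-1}).
\]

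I would then split on the value of $\omega$. If $\omega\leq r-1$, then by Proposition~\ref{prop:turan1} the right-hand side is at most $\N(T_{r-1}(n),K_m)+O(n^{m-1})=\binom{r-1}{m}(n/(r-1))^m+O(n^{m-1})$. A direct comparison of products gives
\[
\frac{\binom{r-1}{m}}{(r-1)^m}\ =\ \frac{1}{m!}\prod_{i=0}^{m-1}\!\Big(1-\tfrac{i}{r-1}\Big)\ <\ \frac{1}{m!}\prod_{i=0}^{m-1}\!\Big(1-\tfrac{i}{r}\Big)\ =\ \frac{\binom{r}{m}}{r^m},
\]
so $\N(T_{r-1}(n),K_m)<\N(T_r(n),K_m)-c(r,m)\,n^m$ for some constant $c(r,m)>0$. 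For $\eta$ sufficiently small this contradicts the lower bound on $\N(G',K_m)$, forcing $\omega=r$. Then Eckhoff's bound reads $\binom{r}{m}(n_1/r)^m+O(n^{m-1})\geq \binom{r}{m}(n/r)^m-2\eta n^m-O(n^{m-1})$, and solving for $n_1$ yields $n_1\geq n(1-C\eta)^{1/m}\geq n(1-C\eta)$ for some constant $C=C(r,m)$, hence $n-n_1\leq C\eta\,n$.

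To conclude, discarding $n_2\geq 0$ gives $e(G')\geq e(T_r(n_1))$; and $e(T_r(n))-e(T_r(n_1))\leq \tfrac{r-1}{2r}(n^2-n_1^2)+O(n)=O((n-n_1)n)=O(\eta n^2)$. Since $\eta\ll\varepsilon$ and $n\geq n_0$ is large, the resulting $e(G')\geq e(T_r(n))-O(\eta n^2)$ beats $e(T_r(n))-\varepsilon n^2$. The main obstacle I anticipate is the case analysis on $\omega$: one must be careful that Eckhoff's theorem is applicable ($\omega\geq m$) and that the strict comparison $\binom{r-1}{m}/(r-1)^m<\binom{r}{m}/r^m$ is used to rule out $\omega\leq r-1$ with a deficit linear in $n^m$, which is what absorbs the $2\eta n^m$ slack. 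Once $\omega=r$ is pinned down, the quantitative extraction of $n_1\approx n$ from the asymptotic in Proposition~\ref{prop:turan1} is routine.
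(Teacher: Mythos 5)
Your proof is correct and follows essentially the same route as the paper: pass to $G'=G[V_1,\dots,V_r]$ by charging the at most $\eta n^m$ clique copies that use intra-part edges, apply Eckhoff's theorem, force $\omega(G')=r$ and $n_1=(1-o(1))n$, and conclude $e(G')\geq e(T_r(n_1))>e(T_r(n))-\varepsilon n^2$. The only difference is cosmetic: you spell out the comparison $\binom{r-1}{m}/(r-1)^m<\binom{r}{m}/r^m$ and the extraction of $n_1\geq(1-C\eta)n$, steps the paper leaves implicit in the phrase ``it yields $\omega=r$ and $n_1>(1-\varepsilon)n$.''
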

\begin{proof}
Let $G'=G[V_1,\cdots,V_r]$. So $\omega:=\omega(G')\leq r$.
Every $K_m$-copy in $G$ either contains some edge in $\cup_{i=1}^{r}E(G[V_i])$ or is contained in $G'$.
Since $\sum_{i=1}^r e(V_i)< \eta n^2$, the number of $K_m$-copies of the former type is at most $\eta n^m$. So we have
$\N(G,K_m)\leqs\N(G',K_m)+\eta n^m.$

Let $n_1,n_2$ be the unique integers satisfying that $e(G')=e(T_\omega(n_1))+n_2$ and $0\leq n_2<\frac{\omega-1}{\omega}n_1$.
If $\omega<m$, then $\N(G',K_m)=0$ and thus $\N(G, K_m)\le \varepsilon n^m$, a contradiction.
So $\omega\ge m$. Then by Theorem \ref{THM: key technique},
$$\N(G',K_m)\leqs\N(T_\omega(n_1),K_m)+\N(T_{\omega-1}(n_2),K_{m-1}).$$
We also have $n_2< n_1\leq n$ and thus $\N(T_{\omega-1}(n_2),K_{m-1})\leq n^{m-1}\leq \eta n^m$.
Now combining the above inequalities, we have
$$\N(T_r(n),K_m)-\delta n^m\leq \N(G,K_m)\leq \N(T_\omega(n_1),K_m)+2\eta n^m,$$
where the first inequality is given by the conditions.
Since $\omega\le r$, $n_1\leq n$ and $\varepsilon\gg\eta \gg \delta\gg 1/n$,
it yields $\omega=r$ and $n_1>(1-\varepsilon)n.$
By the definition of $n_1$, we can conclude that
$$e(G')\geqs e(T_r(n_1))>e(T_r(n))-\varepsilon n^2.$$
This completes the proof of the lemma.
\end{proof}

Now we are ready to prove Theorem \ref{THM: partition2}.

\begin{proof}[Proof of Theorem \ref{THM: partition2}.]
We are given $\varepsilon>0$ and a fixed graph $H$ with $\chi(H)=r+1>m\geq 2$.
We will choose the constants appeared in this proof satisfying the following hierarchy:
\begin{equation}\label{equ:hierarchy}
\varepsilon\gg\eta \gg \delta\gg 1/k_0\gg \gamma_0\gg 1/n_0,
\end{equation}
where $\eta$ is from Lemma \ref{LEM:InEdges} and each of $\delta, k_0, \gamma_0, n_0$ can be expressed as functions of $H, \varepsilon, \eta$ and the previous constants in this order.
Let $G$ be an $H$-free graph on $n\geqs n_0$ vertices with
\begin{equation}\label{equ:NKm}
\N(G,K_m)\geq\N(T_r(n),K_m)-\delta n^m\geq \binom{r}{m}\left(\frac nr\right)^m-2\delta n^m,
\end{equation}
where the last inequality follows by Proposition \ref{prop:turan1}.
We will show that
\begin{equation}\label{equ:InEdges}
\text{there exists a partition of } V(G)=V_1\dot{\cup}\cdots\dot{\cup}V_r
\text{ such that } \sum_{i=1}^{r}e(V_i)<\eta n^2.
\end{equation}
Note that by Lemma \ref{LEM:InEdges} and Theorem \ref{THM:simonovits stability},
this would imply that $G$ can be obtained from $T_r(n)$ by adding and deleting a set of at most $\varepsilon n^2$ edges.

Let $d:=\frac{\delta}{2}$ and $\Delta:=\Delta(H)$. Then there exists a real $\gamma_0>0$
such that the conclusion of Lemma \ref{LEM: Cluster blowup} holds for $d$ and $\Delta$,
and in addition, $\gamma_0$ satisfies the hierarchy \eqref{equ:hierarchy}.
%such that $\gamma_0\leq \frac{1}{k_0}$ and $3\gamma_0+d\leq\delta,$ where $k_0=k_0(\eta,H)$ is a large constant to be chosen later.
By Theorem \ref{THM:regularity lemma}, there exists a $\gamma_0$-regular partition $\mathcal{A}:=\{A_0,\cdots,A_k\}$ of $G$ with $\gamma_0^{-1}\leq k\leq M(\gamma_0)$.
Let $\ell=|A_1|=\cdots=|A_k|$. As $|A_0|<\gamma_0 n$, we have $\ell\geq\frac{1-\gamma_0}{k}n\geq \frac{1-\gamma_0}{M(\gamma_0)}n_0$ and thus
we can choose $n_0$ so that $\ell d^{\Delta}\geq 2|V(H)|$.
Let $R$ be the $(\gamma_0,d)$-cluster graph of $\mathcal{A}$.

We first show that the clique number $\omega:=\omega(R)$ is at most $r$.
Suppose for a contradiction that $K_{r+1}\subseteq R$.
Then $H\subseteq K_{r+1}(|V(H)|)\subseteq R(|V(H)|)$, which, together with Lemma \ref{LEM: Cluster blowup}, implies that $H\subseteq G$, a contradiction.
Thus $R$ is a $K_{r+1}$-free graph on $k\ge \gamma_0^{-1}$ vertices.

The following claim gives an estimation on the number of edges in $R$.

\medskip

{\noindent\bf{Claim.}}  $e(R)\ge e(T_r(k))-c_H\delta^{1/m} k^2$, where $c_H>0$ is a constant only depending on $H$.
%factor in $O(\delta^{1/m})$

\medskip

\noindent {\it Proof of the claim.}
Let $n_1$ and $n_2$ be the unique integers satisfying $e(R)=e(T_\omega(n_1))+n_2$ and $0\leq n_2<\frac{\omega-1}{\omega}n_1$.
By Theorem \ref{THM: key technique} and its remark, $n_2<n_1\le |V(R)|=k$ and $$\N(R,K_m)\leqs \N(T_\omega(n_1),K_m)+\N(T_{\omega-1}(n_2),K_{m-1}).$$
Since $\omega\le r$ and $\N(T_{\omega-1}(n_2),K_{m-1})\leq k^{m-1}$, by Proposition \ref{prop:turan1}, we have
\begin{equation*}
\N(R,K_m)\leq{\binom{r}{m}}\left(\frac{n_1}r\right)^m+O(k^{m-1}).
\end{equation*}
By the choices of $\gamma_0$ and $k$, we have $\frac{1}{k}\leq \frac{1}{k_0}\ll \delta$, implying that
\begin{equation}\label{equ: N(R,Km)<}
\N(R,K_m)\leq{\binom{r}{m}}\left(\frac{n_1}r\right)^m+\delta\cdot k^m.
\end{equation}

We then estimate the number $\N(G, K_m)$ of the copies of $K_m$ in $G$, which must belong to one of the following five types.
For those copies of $K_m$ containing some vertex in $A_0$, since $|A_0|<\gamma_0 n$, these copies will contribute no more than $\gamma_0 n^m$ to $\N(G, K_m)$.
For those copies of $K_m$ containing at least two vertices in $A_i$ for some $i\in[k]$, since $\gamma_0^{-1}\leq k$ and $k\ell\leq n$, they will contribute at most $ k\binom{\ell}{2}n^{m-2}\leq\gamma_0 n^m$.
For those copies of $K_m$ containing some edge in non-$\gamma_0$-regular pairs of $\mathcal{A}$, since there are at most $\ell^2\cdot\gamma_0 k^2$ such edges,
there are at most $\ell^2\gamma_0 k^2n^{m-2}\leq\gamma_0 n^m$ such copies.
For those copies of $K_m$ containing some edge in $\gamma_0$-regular pairs of $\mathcal{A}$ with density $<d$, since there are at most $d\ell^2\binom{k}{2}$ such edges,
there are at most $d\ell^2\binom{k}{2}n^{m-2}\leq d n^m$ such copies.
For those copies of $K_m$ not belonging to the above types, all of their edges must be in $(\gamma_0,d)$-regular pairs of $\mathcal{A}$,
and thus there are at most $\N(R,K_m)\cdot \ell^m\leq {\N(R,K_m)}\cdot\left(\frac{n}{k}\right)^m$ such copies of $K_m$.
Summing up these five types, we have, as $3\gamma_0+d\leq\delta$, that
\begin{equation*}\label{equ: N(G,Km)<}
\N(G,K_m)\leq 3\gamma_0 n^m+d n^m+\frac{\N(R,K_m)}{k^m}n^m\leqs\delta n^m+\frac{\N(R,K_m)}{k^m}n^m.
\end{equation*}
Together with \eqref{equ:NKm}, this implies that
\begin{equation}\label{equ: N(R,Km)>}
\frac{\N(R,K_m)}{k^m}\geq\binom{r}{m}\left(\frac 1r\right)^m-3\delta.
\end{equation}
Combining with \eqref{equ: N(R,Km)<} and \eqref{equ: N(R,Km)>}, we have
\begin{equation*}
\frac{n_1}{k}\geq 1-c\cdot \delta^{1/m},
\end{equation*}
where the constant $c>0$ depends on $r$ and $m$ (and thus only depends on $H$).
By the definition of $n_1$, we have
\begin{equation*}\label{equa: e(TrN1)>}
e(R)\geqs e(T_r(n_1))\geqs e(T_r(k))-c_H\delta^{1/m}\cdot k^2,
\end{equation*}
completing the proof of the claim. \qed

\medskip

We now choose $\delta=\delta(\eta,K_{r+1})$ and $k_0=k_0(\eta,K_{r+1})$ according to Theorem \ref{THM:simonovits stability} such that
for any $K_{r+1}$-free graph $\mathcal{G}$ on $k\geq k_0$ vertices with $e(\mathcal{G})\ge e(T_r(k))-c_H\delta^{1/m}\cdot k^2$,
there exists a partition of $V(\mathcal{G})= W_1\dot{\cup}\cdots\dot{\cup} W_r$ such that $\sum_{i=1}^{r}e_{\mathcal{G}}(W_i)<\eta k^2/2$.

We have seen that the cluster graph $R$ is a $K_{r+1}$-free graph on $k\ge \gamma_0^{-1}\ge k_0$ vertices.
Therefore, by the claim, there is a partition of $V(R)=[k]=W_1\dot{\cup}\cdots\dot{\cup} W_r$ such that $$\sum_{i=1}^{r}e_R(W_i)<\eta k^2/2.$$
Then one can partition $V(G)$ into the following $r$ parts: $V_1=(\cup_{j\in W_1}A_j)\cup A_0$ and $V_i=\cup_{j\in W_i}A_j$ for $i\in \{2,\cdots,r\}$.
It remains to estimate the number of edges in $\cup_{i=1}^{r}G[V_i]$, each of which belongs to one of following five types:
(Note that $d=\delta/2$, $k\ell\leq n$ and $\frac{1}{k}\leq \gamma_0$.)
\begin{itemize}
\item[-] edges incident to some vertex in $A_0$, the number of which is at most $\gamma_0 n^2$,
\item[-] edges in $G[A_i]$ for some $i\in [k]$, the number of which is at most $ k\binom{\ell}{2}\leq \gamma_0 n^2$,
\item[-] edges in non-$\gamma_0$-regular pairs of $\mathcal{A}$, the number of which is at most $\ell^2\gamma_0 k^2\leq \gamma_0 n^2$,
\item[-] edges in $\gamma_0$-regular pairs $\mathcal{A}$ with density $<d$, the number of which is at most $d\ell^2k^2\leq \delta n^2/2$, and
\item[-] edges in some $(\gamma_0,d)$-regular pair $(A_{j_1},A_{j_2})$ for some $j_1,j_2\in W_i$ and $i\in[r]$, the number of which is at most $\ell^2\sum_{i=1}^{r}e_R(W_i)<\ell^2\eta k^2/2\leq\eta n^2/2$.
\end{itemize}
Combining, as $3\gamma_0+\delta/2+\eta/2<\eta$, we have that $\sum_{i=1}^{r}e(V_i)<\eta n^2.$
This proves \eqref{equ:InEdges} and thus completes the proof of Theorem \ref{THM: partition2}.
\end{proof}

\section{Counting cliques}
This section will be devoted to the proof of Theorem \ref{THM: Main Theorem} (from which Corollary \ref{cor:edge-critical} will also follow).
We have established the lower bound. So it suffices to show for sufficiently large $n$, if $G$ is an $n$-vertex $H$-free graph with
\begin{equation}\label{equa: N(G,Km)>N(Trn,Km)}
\N(G,K_m)\geq \N(T_r(n),K_m),
\end{equation}
then
\begin{equation}\label{equ:N(G,Km)}
\N(G,K_m)\leq \N(T_r(n),K_m)+\biex(n,H)\cdot O(n^{m-2}).
\end{equation}

We will proceed with a sequence of claims.
\begin{claim}\label{LEM: large deltam}
We may assume an additional condition for $G$ that $\delta^{(m)}(G)\geq\delta^{(m)}(T_r(n))$.
\end{claim}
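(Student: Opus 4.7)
The plan is the standard minimum-$K_m$-degree reduction by iterative vertex deletion. Assuming the bound \eqref{equ:N(G,Km)} has been proved whenever $\delta^{(m)}(G)\ge\delta^{(m)}(T_r(n))$, I would deduce the general case by induction on $n$, with Proposition~\ref{prop:turan2} as the essential tool.

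Concretely, if $\delta^{(m)}(G)<\delta^{(m)}(T_r(n))$, pick $v\in V(G)$ realizing the minimum and set $G':=G-v$. Then $G'$ is still $H$-free, and Proposition~\ref{prop:turan2} together with the hypothesis \eqref{equa: N(G,Km)>N(Trn,Km)} gives
\[
\N(G',K_m)=\N(G,K_m)-d_G^{(m)}(v)>\N(G,K_m)-\bigl[\N(T_r(n),K_m)-\N(T_r(n-1),K_m)\bigr]\ge\N(T_r(n-1),K_m),
\]
so $G'$ satisfies the hypothesis on $n-1$ vertices. Applying the inductive bound to $G'$, adding back $d_G^{(m)}(v)<\delta^{(m)}(T_r(n))=\N(T_r(n),K_m)-\N(T_r(n-1),K_m)$, and using the monotonicity $\biex(n-1,H)\le\biex(n,H)$ (extending an $\mathcal F_H$-free graph by an isolated vertex keeps it $\mathcal F_H$-free), one obtains
\[
\N(G,K_m)<\N(T_r(n),K_m)+\biex(n-1,H)\cdot O((n-1)^{m-2})\le \N(T_r(n),K_m)+\biex(n,H)\cdot O(n^{m-2}),
\]
which is \eqref{equ:N(G,Km)}. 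The base of the induction is exactly the assumed min-degree statement.

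The only real subtlety is to ensure the induction terminates above the threshold $n_0$ at which the min-degree statement applies, so the constant hidden in $O(n^{m-2})$ remains uniform. In the edge-critical case, the induction hypothesis applied to $G'$ would yield $\N(G',K_m)\le \N(T_r(n-1),K_m)$ (since $\biex\equiv0$), contradicting the strict inequality $\N(G',K_m)>\N(T_r(n-1),K_m)$ derived above; hence the non-min-degree branch never arises and $\delta^{(m)}(G)\ge \delta^{(m)}(T_r(n))$ holds automatically. In the non-edge-critical case, Proposition~\ref{Obs: sigma VS biex} gives $\biex(n,H)\ge n-1$, so the error term $\biex(n,H)\cdot O(n^{m-2})$ is large enough to absorb any $O(1)$ slack that could accumulate near the base of the induction. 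Thus the reduction closes cleanly, and the real work of the theorem is shifted onto establishing the bound under the additional min-degree condition.
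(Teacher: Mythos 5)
Your high-level plan — delete a vertex of low $K_m$-degree and telescope via Proposition~\ref{prop:turan2} — is the same as the paper's, and the arithmetic in your inductive step is fine. The genuine gap is in the \emph{termination} of the reduction, which is precisely the delicate point you flag as "the only real subtlety" and then fail to close.

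Your induction must bottom out at some $n_0$ where the min-degree version of the bound is available, but neither of your two proposed fallbacks establishes a base case. The edge-critical "contradiction" relies on the inductive hypothesis at $n-1$, so it cannot serve as a base case at $n=n_0$; it is circular exactly where you need it. The non-edge-critical "absorb $O(1)$ slack" remark misidentifies the danger: the problem is not a small additive error to be swallowed by the $\biex$ term, but rather that the peeling process might descend \emph{below} $n_0$ before reaching a graph that satisfies the min-$K_m$-degree condition, at which point the assumed bound simply does not apply.

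What you are missing is the accumulated surplus that makes the peel stop in time. Because each deleted vertex satisfies $d^{(m)}_{G_j}(v_j)\le\delta^{(m)}(T_r(j))-1$ (strict, with slack $1$), the margin grows by one per step: one gets $\N(G_j,K_m)\ge\N(T_r(j),K_m)+(n-j)$, not merely $>\N(T_r(j),K_m)$. Your inequality chain is strict at each step but you never track this linearly growing surplus. The paper uses it decisively: if the process reached $t<n_0$ vertices, then $\binom{n_0}{m}>\binom{t}{m}\ge\N(G_t,K_m)\ge n-t>n-n_0\ge\binom{n_0}{m}$ once $n\ge n_0+\binom{n_0}{m}$, a contradiction. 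This guarantees the terminal graph $G_t$ has $t\ge n_0$ vertices and satisfies the min-degree hypothesis, after which the back-propagation you wrote is exactly the right computation. Without this counting step the reduction does not go through.
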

\begin{proof}
Assume $n\geq n_0+\binom{n_0}{m}$ for some sufficiently large $n_0$.
Let $G_n:=G$. If $\delta^{(m)}(G_n)\geq \delta^{(m)}(T_r(n))$, then there is nothing to show.
So we may assume there exists some vertex $v_n\in V(G_n)$ with $d_{G_n}^{(m)}(v_n)\leq \delta^{(m)}(T_r(n))-1$. Let $G_{n-1}:=G_n-\{v_n\}$.
Then, by Proposition \ref{prop:turan2}, we have $\N(G_{n-1},K_m)=\N(G_n,K_m)-d_{G_n}^{(m)}(v_n)\geq\N(T_r(n),K_m)-\delta^{(m)}(T_r(n))+1=\N(T_r(n-1),K_m)+1$.
	
We then iteratively define graphs $G_j$ satisfying $\N(G_j,K_m)\geq \N(T_r(j),K_m)+(n-j)$ as following.
Assume that $G_j$ is defined. If there exists some $v_j\in G_j$ with $d_{G_j}^{(m)}(v_j)\leq \delta^{(m)}(T_r(j))-1$,
then let $G_{j-1}:=G_j-\{v_j\}$ and it also follows that $\N(G_{j-1},K_m)=\N(G_j,K_m)-d_{G_j}^{(m)}(v_j)\geq \N(T_r(j-1),K_m)+(n-j+1)$;
otherwise, terminate.

Let $G_t$ be the graph for which the above iteration terminates. So $G_t$ has exactly $t$ vertices and $\delta^{m}(G_t)\ge \delta^{m}(T_r(t))$.
Suppose that $t<n_0$. Then we have
$$\binom{n_0}{m}>\binom{t}{m}\geq \N(G_t,K_m)\geq \N(T_r(t),K_m)+(n-t)\geq n-n_0\geq \binom{n_0}{m},$$
a contradiction. So we have $n\geq |V(G_t)|=t\geq n_0$.

Now suppose that under the additional condition $\delta^{m}(G_t)\ge \delta^{m}(T_r(t))$,
one can derive from the inequality $\N(G_t,K_m)\geq \N(T_r(t),K_m)$ (for $t\ge n_0$) that
\eqref{equ:N(G,Km)} holds for $G_t$, i.e., $\N(G_t,K_m)\le \N(T_r(t),K_m)+\biex(t,H)\cdot O(t^{m-2})$.
Then we would infer that \eqref{equ:N(G,Km)} also holds for $G$, by the following
\begin{eqnarray*}
\N(G,K_m)&=&\N(G_t,K_m)+\sum_{j=t+1}^{n}d_{G_j}^{(m)}(v_j)\\&\leq &\N(T_r(t),K_m)+\biex(t,H)\cdot O(t^{m-2})+\sum_{j=t+1}^{n}\delta^{(m)}(T_r(j))\\
&=&\N(T_r(n),K_m)+\biex(n,H)\cdot O(n^{m-2}),
\end{eqnarray*}
where the last equality follows from Proposition \ref{prop:turan2}. This proves Claim \ref{LEM: large deltam}.
\end{proof}

Choose $\varepsilon>0$ to be sufficiently small.
%such that $\gamma(\varepsilon),\beta(\varepsilon),\theta(\varepsilon)$ and $\eta(\varepsilon)$ (which will be defined later) also are sufficiently small.
Let $V_1,\cdots,V_r$ be a partition of $V(G)$ such that $\sum_{i=1}^re(V_i)$ is minimized.
In view of \eqref{equa: N(G,Km)>N(Trn,Km)}, by Theorem \ref{THM: partition}, we have that
\begin{equation}
\sum_{i=1}^re(V_i)<\varepsilon n^2. \label{equa: sum e(V_i)<ep n^2}
\end{equation}
By Lemma \ref{LEM:InEdges},
there exists some $\gamma=\gamma(\varepsilon)$ with $\lim_{\varepsilon\rightarrow0}\gamma(\varepsilon)=0$ such that
\begin{equation}\label{equ:e[r]}
e(G[V_1,\cdots,V_r])>e(T_r(n))-\gamma n^2.
\end{equation}
Let $\beta=\beta(\varepsilon):=\max(2\sqrt{\varepsilon},\sqrt[3]{4\gamma})$.
We may assume that $\varepsilon$ is small so that $\beta<(r-1)^{-2}$. Let $B_i=\{x\in V_i:|N(v)\cap V_i|>\beta n\}$ for $i\in [r]$.
Let $B=\cup_{i=1}^rB_i$ and let $U_i=V_i\setminus B$.
Because of \eqref{equa: sum e(V_i)<ep n^2} and $\beta\geqs2\sqrt{\varepsilon}$, we get
$$|B|<\frac{2\varepsilon n^2}{\beta n}\leqs\frac{\beta }2n.$$
The next claim further bounds the size of $B$  from above by an absolute constant.
Recall the definition of $\sigma(H)$ in Section 2.

\begin{claim}\label{CLM: few bad vertices}
There exists some positive constant $K$ depending only on $\beta$ and $H$ such that $|B|\leqs K(\sigma(H)-1)$. In particular, if $H$ is edge-critical, then $B=\emptyset$.
\end{claim}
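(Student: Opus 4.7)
The plan is to argue by contradiction: assume $|B|>K(\sigma(H)-1)$ for a constant $K=K(\beta,H)$ to be chosen sufficiently large, and derive a copy of $H$ in $G$. If $\sigma(H)\geqs2$, pigeonhole on the $r$ parts gives some $B_i$, WLOG $B_1$, with $|B_1|\geqs K/r$. The edge-critical case $\sigma(H)=1$ makes the bound $K\cdot0=0$, so ``in particular'' reduces to showing that a single vertex of $B$ already suffices to embed $H$; this will be covered by the same argument specialized to $s=1$.

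Fix a proper $(r+1)$-coloring of $H$ with color classes $C_0,C_1,\dots,C_r$ where $|C_0|=\sigma(H)$, and set $s=\sigma(H)$ and $h=|V(H)|$. The strategy is to embed $H$ by sending $C_0$ into a chosen set $W_0\subseteq B_1$ of size $s$, sending $C_1$ into the common $V_1$-neighborhood $N$ of $W_0$ (using the excess intra-$V_1$ edges at $B_1$ to realize the bipartite graph $H[C_0,C_1]\in\mathcal{F}_H$), and sending $C_j$ into $V_j$ for each $j\geqs2$ (using the near-$T_r(n)$ cross-part structure). For the existence of $W_0$: every vertex of $B_1$ has more than $\beta n$ neighbors in $V_1$, so a standard convexity/double-counting over random $s$-tuples from $B_1$ produces, once $|B_1|\geqs K/r$ is sufficiently large in terms of $\beta$ and $s$, some $W_0$ of size $s$ with common $V_1$-neighborhood $|N|\geqs c_1 n$ for a constant $c_1=c_1(\beta,s)>0$.

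Two further ingredients address the cross-part structure. First, by double-counting missing cross-edges against $e(G[V_1,\dots,V_r])>e(T_r(n))-\gamma n^2$, the set $X$ of ``cross-defective'' vertices $v$ (those with $|N(v)\cap V_j|<(1-\beta r)|V_j|$ for some $j$ distinct from the part of $v$) satisfies $|X|=O(\gamma n/\beta)$. Second, I would exploit the additional minimum-degree condition $d_G^{(m)}(v)\geqs\delta^{(m)}(T_r(n))$ from Claim~\ref{LEM: large deltam} to show $B_1\cap X=\emptyset$: if some $v\in B_1$ had $|N(v)\cap V_j|<(1-\beta r)|V_j|$ for some $j\geqs2$, then estimating $\N(G[N(v)],K_{m-1})$ through the near-$r$-partite structure of $N(v)$ gives a count strictly below $\delta^{(m)}(T_r(n))$, contradicting the hypothesis. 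Consequently each $w\in W_0$ has $|N(w)\cap V_j|\geqs(1-\beta r)|V_j|$ for every $j\geqs2$, and the common cross-neighborhoods in $V_j$ have size at least $(1-s\beta r)|V_j|$.

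The embedding is finished by a greedy selection of $C_1\hookrightarrow N\setminus X$ and $C_j\hookrightarrow V_j\setminus X$ for $j\geqs2$, respecting the $H$-edges one vertex at a time. At each step the candidate set is restricted by at most $\beta r\cdot|V_j|$ vertices per previously embedded constraint (as such a vertex lies outside $X$ and thus has high cross-degree to each $V_{j'}$), plus the $|X|=O(\gamma n/\beta)$ defective vertices; the hierarchy of constants in the outer proof, with $\gamma\ll\beta^{s+1}$ achievable by choosing $\varepsilon$ sufficiently small (recall $\beta\geqs2\sqrt{\varepsilon}$), keeps the candidate set nonempty throughout. This yields $H\subseteq G$, the desired contradiction. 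The main technical obstacle is the cross-degree bound on $B_1$-vertices via the minimum-degree condition: one must verify quantitatively that the ``bonus'' from extra intra-$V_1$ edges cannot compensate for a depressed cross-degree to any $V_j$.
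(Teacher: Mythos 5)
Your overall plan --- place the smallest color class $C_0$ on $\sigma(H)$ bad vertices, a second class inside their common neighborhood in $V_1$, and the remaining classes across the parts --- is close in spirit to the paper, but the step you yourself flag as the ``main technical obstacle'' is a genuine gap, not a verification left to the reader. You claim that the condition $\delta^{(m)}(G)\geqs\delta^{(m)}(T_r(n))$ from Claim~\ref{LEM: large deltam} forces $B_1\cap X=\emptyset$, i.e.\ that every bad vertex has cross-degree at least $(1-\beta r)|V_j|$ to each $V_j$. This deduction fails: a bad vertex can compensate a depressed cross-degree by its large intra-part degree. Already for $r=m=2$, a vertex $v\in B_1$ with $|N(v)\cap V_1|=\lfloor n/4\rfloor$ and $|N(v)\cap V_2|=\lceil n/4\rceil$ satisfies the partition-minimality constraint $|N(v)\cap V_2|\geqs|N(v)\cap V_1|$ and the degree bound $d^{(2)}(v)\geqs\lfloor n/2\rfloor=\delta^{(2)}(T_2(n))$, yet its cross-degree is nowhere near $(1-2\beta)|V_2|$; for general $m$ the same compensation occurs because each intra-part neighbor $u$ of $v$ can contribute $\Theta(n^{m-2})$ copies of $K_m$ through the edge $vu$. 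The analogous statement in the paper (Claim~\ref{CLM: degree of good is large}) is proved only for vertices of $U_i$, precisely because for them the intra-part contribution is at most $\beta n^{m-1}$. Without the cross-degree bound on $W_0$, the edges of $H$ between $C_0$ and $C_j$ ($j\geqs2$) cannot be embedded: the only information about a bad vertex is that it has at least $\beta n$ neighbors in every part (by minimality of the partition), and the common cross-neighborhood in $V_j$ of $s=\sigma(H)\geqs 2$ such vertices may well be empty. A secondary issue is your hierarchy $\gamma\ll\beta^{s+1}$: with the paper's definition $\beta=\max(2\sqrt{\varepsilon},\sqrt[3]{4\gamma})$ one may have $\gamma=\beta^3/4$, so this is not available without re-tuning $\beta$.

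The paper circumvents exactly this difficulty by never making any cross-degree claim about bad vertices. For a single bad vertex $v$ it picks sets $S_i\subseteq N(v)\cap U_i$ of size $\beta n/2$ in every part; by \eqref{equ:e[r]} the pairs $(S_i,S_j)$ are nearly complete, so the supersaturation theorem of Erd\H{o}s and Simonovits gives at least $cn^{br}$ copies of the blowup $K_r(b)$, $b=|V(H)|$, inside $N(v)$. Then an auxiliary bipartite graph between $B$ and the copies of $K_r(b)$ is counted: each copy can lie in the common neighborhood of at most $\sigma(H)-1$ bad vertices (otherwise $H$ embeds, with $C_0$ on those bad vertices and the other classes inside the blowup), while each bad vertex sees at least $cn^{br}$ copies, whence $|B|\leqs(\sigma(H)-1)/c$. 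This counting step is the idea your proposal is missing; note also that the paper's proof uses neither Claim~\ref{LEM: large deltam} nor any greedy cross-part embedding. Your argument does essentially work in the edge-critical case $\sigma(H)=1$, where only one bad vertex is involved and all classes of $H$ can be embedded inside the nearly complete $r$-partite structure of its own neighborhood, but for $\sigma(H)\geqs2$ the proposed route does not go through as stated.
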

\begin{proof}
Note that $V_1,\cdots,V_r$ is a partition of $V(G)$ such that $\sum_{i=1}^re(V_i)$ is minimized.
So for any $v\in V_i$ and $j\neq i$, we have $|N(v)\cap V_j|\geq |N(v)\cap V_i|$.
This together with the definition of $B$ show that for any $v\in B$ and $i\in [r]$, $|N(v)\cap V_i|\geq \beta n$.
Since $U_i=V_i\setminus B$ and $|B|<\frac\beta2n$, it follows that $|N(v)\cap U_i|>\frac\beta2n$.
	
Consider an arbitrarily but fixed $v\in B$. Let $S_i\subset N(v)\cap U_i$ be a set of size $\frac\beta2n$ for each $i\in[r]$.
The inequality \eqref{equ:e[r]} tells that $G[V_1,\cdots,V_r])$ misses at most $\gamma n^2$ edges,
so for all $i\neq j$ we have $e(G[S_i,S_j])>|S_i||S_j|-\gamma n^2\geq (1-\beta)\beta^2n^2/4$.
(Here, we used $\beta\geq \sqrt[3]{4\gamma}$.) Thus the edge-density of $G[\cup_{i=1}^rS_i]$ is at least
$$\frac{\binom{r}{2}(1-\beta)\beta^2n^2/4}{\binom{r\beta n/2}{2}}\geqs\frac{r-1}{r}(1-\beta)>\frac{r-2}{r-1},$$
where the last inequality holds because of that $\beta<(r-1)^{-2}$.
So we can apply the supersaturation theorem of Erd\H{o}s and Simonovits \cite{Erdos-Simononvits-supersaturation}
and conclude that the graph $G[\cup_{i=1}^rS_i]$ contains at least $cn^{br}$ copies of the $b$-blowup $K_{r}(b)$,
where $b:=|V(H)|$ and $c:=c(\beta,H)>0$ is a constant.
	
Let $\mathcal X$ be the set of all copies of $K_r(b)$ in $G[\cup_{i=1}^rS_i]$.
So $|\mathcal X|\leq n^{br}$.
We then define an auxiliary bipartite graph $\mathcal{G}$ with the bipartition $(\mathcal X,B)$,
where $R\in \mathcal X$ and $v\in B$ are adjacent in $\mathcal{G}$ if and only if $V(R)\subseteq N_{G}(v)$.
By the previous paragraph, we see $d_\mathcal{G}(v)\geq cn^{br}$ for all $v\in B$.
We point out that $d_\mathcal{G}(R)\leq \sigma(H)-1$ for all $R\in\mathcal X$,
as otherwise it will lead to an $H$-copy by the definition of $\sigma(H)$.
Therefore, $|B|cn^{br}\leq e(\mathcal{G})\leq(\sigma(H)-1)\cdot n^{br}$.
This shows that $|B|\leq K(\sigma(H)-1)$, where $K=1/c$.
\end{proof}

\begin{claim}\label{CLM: close balanced}
There exists some $\theta=\theta(\varepsilon)$ with $\lim_{\varepsilon\rightarrow0}\theta(\varepsilon)=0$ such that $||V_i|-\frac{n}{r}|<\theta n$ for all $i\in[r]$.
\end{claim}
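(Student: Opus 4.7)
The plan is to upper bound $\N(G,K_m)$ in terms of the part sizes $|V_i|$ and then use the strict maximum of the elementary symmetric polynomial at the balanced point.

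First I split the copies of $K_m$ in $G$ into those that use at least one edge inside some $V_i$ and those whose $m$ vertices lie in $m$ distinct parts. By \eqref{equa: sum e(V_i)<ep n^2} the former contribute at most $\sum_{i=1}^r e(V_i)\cdot n^{m-2}<\varepsilon n^m$, while the latter are bounded from above by the count of $K_m$-copies in the \emph{complete} $r$-partite graph with parts $V_1,\dots,V_r$, which is
\[
e_m(|V_1|,\dots,|V_r|):=\sum_{\substack{S\subseteq[r]\\ |S|=m}}\prod_{i\in S}|V_i|.
\]
Combining this bound with \eqref{equa: N(G,Km)>N(Trn,Km)} and Proposition \ref{prop:turan1} yields
\[
e_m(|V_1|,\dots,|V_r|)\ge \binom{r}{m}\left(\frac{n}{r}\right)^m-2\varepsilon n^m
\]
for all sufficiently large $n$.

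The second step is a stability statement for the elementary symmetric polynomial $e_m$: on the simplex $\{x\in\mathbb{R}_{\ge 0}^r:\sum_i x_i=n\}$, the function $e_m$ is uniquely and strictly maximized at the balanced point $x_1=\cdots=x_r=n/r$ with value $\binom{r}{m}(n/r)^m$, which is a special case of Maclaurin's inequality. By homogeneity it is enough to work on the compact standard simplex where $e_m$ is continuous, so a routine compactness argument produces a function $\theta(\varepsilon)$ with $\theta(\varepsilon)\to 0$ as $\varepsilon\to 0$ such that any $x$ satisfying $e_m(x)\ge \binom{r}{m}(n/r)^m-2\varepsilon n^m$ must obey $\max_i|x_i-n/r|\le\theta(\varepsilon)\, n$. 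Applying this with $x_i=|V_i|$ proves the claim.

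The only step that requires any real work is the quantitative stability of $e_m$, but it is standard; a second-order Taylor expansion of $e_m$ at the balanced point, restricted to the hyperplane $\sum_i x_i=n$, shows that the leading error is a negative-definite quadratic form in the deviations $x_i-n/r$, and this already yields the explicit rate $\theta(\varepsilon)=O(\sqrt{\varepsilon})$.
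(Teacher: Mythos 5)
Your proof is correct, and it takes a genuinely different route for the key step. You share the paper's opening move — splitting the $K_m$-copies into those using an edge inside some $V_i$ (at most $\varepsilon n^2\cdot n^{m-2}$ of them by \eqref{equa: sum e(V_i)<ep n^2}) and those bounded by the complete multipartite count — but from there the paper isolates a single part: it bounds $\N(K_{V_1,\cdots,V_r},K_m)$ by $|V_1|\cdot\N(K_{V_2,\cdots,V_r},K_{m-1})+\N(K_{V_2,\cdots,V_r},K_m)$, invokes Theorem \ref{THM: Erdos complete} to replace $K_{V_2,\cdots,V_r}$ by $T_{r-1}(n-|V_1|)$, and then analyzes the one-variable function $f(p)$ of $p=|V_1|/n$, which is shown to be uniquely maximized at $p=1/r$; the conclusion for each $i$ follows by symmetry. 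You instead keep all $r$ part sizes at once, identify the transversal count with the elementary symmetric polynomial $e_m(|V_1|,\dots,|V_r|)$, and use Maclaurin's inequality (strict Schur-concavity) together with homogeneity and a compactness argument on the standard simplex to get stability of the unique balanced maximizer. Your approach buys simultaneity (all parts in one shot, no appeal to Erd\H{o}s' theorem) and, via the Hessian computation $b(J-I)$ restricted to $\sum_i u_i=0$, an explicit rate $\theta(\varepsilon)=O(\sqrt{\varepsilon})$; the paper's buys self-containedness, since it only uses results already stated (Theorem \ref{THM: Erdos complete} and Proposition \ref{prop:turan1}) plus elementary one-variable calculus. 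One small caveat: the Taylor-expansion rate is a local statement, so to claim $O(\sqrt{\varepsilon})$ for all deviations one should either combine it with the compactness step for large deviations or prove a global quadratic deficiency bound; since the claim only requires $\theta(\varepsilon)\to 0$, your compactness argument alone already closes the proof.
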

\begin{proof} By symmetry, it suffices for us to prove for $i=1$.
Let $p:=|V_1|/n\in[0,1]$.
Let $K_{V_1,\cdots,V_r}$ be the complete $r$-partite graph with parts $V_1,\cdots,V_r$.
Each $K_m$-copy in $G$ either contains some edge in $\cup_{i=1}^{r}E(G[V_i])$ or is contained in $G[V_1,\cdots,V_r]\subseteq K_{V_1,\cdots,V_r}$.
By \eqref{equa: N(G,Km)>N(Trn,Km)} and \eqref{equa: sum e(V_i)<ep n^2}, it follows that
\begin{equation*}
\N(T_r(n),K_m)\le \N(G,K_m)\leqs \varepsilon n^2\cdot n^{m-2}+\N(K_{V_1,\cdots,V_r},K_m)
\end{equation*}
Since $K_m$-copy in $K_{V_1,\cdots,V_r}$ either contains exactly one vertex in $V_1$ or is contained in $K_{V_2,\cdots,V_r}$, we have
\begin{equation*}
\N(K_{V_1,\cdots,V_r},K_m)\leq |V_1|\cdot\N(K_{V_2,\cdots,V_r},K_{m-1})+\N(K_{V_2,\cdots,V_r},K_m).
\end{equation*}
By Theorem \ref{THM: Erdos complete}, we also have that for $j\in \{m-1, m\}$
\begin{equation*}
\N(K_{V_2,\cdots,V_r},K_j)\leqs\N(T_{r-1}(n-|V_1|),K_j).
\end{equation*}
Putting the above inequalities together, it holds that $$\N(T_{r}(n),K_m)\leq \varepsilon n^m+|V_1|\cdot \N(T_{r-1}(n-|V_1|),K_{m-1})+\N(T_{r-1}(n-|V_1|),K_m).$$
By Proposition \ref{prop:turan1}, this yields
	\begin{eqnarray*}
	\binom rm\left(\frac nr\right)^m &\leqs& pn\binom{r-1}{m-1}\left(\frac{(1-p)n}{r-1}\right)^{m-1}+\binom{r-1}{m}\left(\frac{(1-p)n}{r-1}\right)^m+2\varepsilon n^m.
	\end{eqnarray*}
After some simplifications, it gives that
	$$f(p):=m(r-1)p(1-p)^{m-1}+(r-m)(1-p)^m-r\left(1-\frac1r\right)^m\geq-2\varepsilon.$$
One can easily verify that $f(p)$ increases in $[0,\frac1r]$ and decreases in $[\frac1r,1]$, where $f(\frac1r)=0$.
So by the continuity of $f$, there exists some $\theta=\theta(\varepsilon)$ with $\lim_{\varepsilon\rightarrow0}\theta(\varepsilon)=0$ such that $|p-\frac1r|<\theta$.
This proves Claim \ref{CLM: close balanced}.
\end{proof}

\begin{claim}\label{CLM: degree of good is large}
	There exists $\eta=\eta(\varepsilon)$ with $\lim_{\varepsilon\rightarrow0}\eta(\varepsilon)=0$ such that $$|N(v)\cap U_j|>\left(\frac1r-\eta\right) n$$
	for every $v\in U_i$ and every $j\neq i$.
\end{claim}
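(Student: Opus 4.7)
The plan is to argue by contradiction, using the minimum-degree condition $\delta^{(m)}(G)\geq\delta^{(m)}(T_r(n))$ from Claim \ref{LEM: large deltam}. Suppose some $v\in U_i$ and some $j_0\neq i$ satisfy $|N(v)\cap U_{j_0}|\leq \left(\tfrac{1}{r}-\eta\right)n$; I will show $d_G^{(m)}(v)<\delta^{(m)}(T_r(n))$, contradicting Claim \ref{LEM: large deltam}.

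Since $d_G^{(m)}(v)$ equals the number of copies of $K_{m-1}$ in $G[N(v)]$, I would split these copies into two types. First, those containing some edge inside $G[V_\ell]$ for some $\ell\in[r]$: by \eqref{equa: sum e(V_i)<ep n^2}, their number is at most $\sum_{\ell}e(V_\ell)\cdot n^{m-3}<\varepsilon n^{m-1}$. Second, those with at most one vertex in each $V_\ell$: writing $n_\ell:=|N(v)\cap V_\ell|$, this count is at most $\sum_{S\subseteq[r],\,|S|=m-1}\prod_{\ell\in S}n_\ell$. Since $v\notin B_i$ gives $n_i\leq\beta n$, any subset $S\ni i$ contributes only $O(\beta n^{m-1})$, so the dominant contribution comes from $S\subseteq [r]\setminus\{i\}$.

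For those dominant terms I would split further by whether $j_0\in S$. Claim \ref{CLM: close balanced} yields $n_\ell\leq|V_\ell|\leq\left(\tfrac1r+\theta\right)n$ for every $\ell$, while the hypothesis together with Claim \ref{CLM: few bad vertices} gives $n_{j_0}\leq|N(v)\cap U_{j_0}|+|B|\leq\left(\tfrac1r-\eta\right)n+O(1)$. Plugging in and using $\binom{r-2}{m-1}+\binom{r-2}{m-2}=\binom{r-1}{m-1}$, a short computation yields
\[
d_G^{(m)}(v)\leq\binom{r-1}{m-1}\left(\tfrac{n}{r}\right)^{m-1}-c_{r,m}\,\eta\,n^{m-1}+O\!\left((\varepsilon+\beta+\theta)\,n^{m-1}\right)
\]
for some positive constant $c_{r,m}>0$ depending only on $r$ and $m$ (essentially $c_{r,m}=\binom{r-2}{m-2}/r^{m-2}$). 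By Propositions \ref{prop:turan1} and \ref{prop:turan2}, one has $\delta^{(m)}(T_r(n))=\binom{r-1}{m-1}(n/r)^{m-1}+O(n^{m-2})$; hence choosing $\eta$ to be a sufficiently large multiple of $\theta(\varepsilon)+\beta(\varepsilon)+\varepsilon$ forces the claimed contradiction, and since all three quantities tend to $0$ with $\varepsilon$, such an $\eta(\varepsilon)\to 0$ exists.

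The main obstacle will be bookkeeping rather than conceptual: one has to cleanly separate $i\in S$ from $i\notin S$ and then $j_0\in S$ from $j_0\notin S$, while simultaneously tracking error contributions from internal edges of $V_\ell$, from $N(v)\cap B$, and from the lower-order terms in Propositions \ref{prop:turan1}--\ref{prop:turan2}. Once these are gathered, the deficit $c_{r,m}\eta n^{m-1}$ dominates the error and yields the desired $\eta(\varepsilon)$.
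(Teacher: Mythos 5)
Your proposal is correct and is essentially the paper's argument in contrapositive form: both bound $d_G^{(m)}(v)$ from above by splitting the $K_m$-copies through $v$ into those using an edge inside some $V_\ell$, those using a neighbour of $v$ in $V_i$ (controlled since $v\notin B_i$), and the ``rainbow'' ones with or without a vertex in $V_j$ (controlled via Claim \ref{CLM: close balanced}), and then play this off against $\delta^{(m)}(G)\geq\delta^{(m)}(T_r(n))=\binom{r-1}{m-1}(n/r)^{m-1}+O(n^{m-2})$ from Claims \ref{LEM: large deltam} and Propositions \ref{prop:turan1}--\ref{prop:turan2}. The only difference is presentational (the paper solves directly for a lower bound on $|N(v)\cap V_j|$ and then subtracts $|B_j|$, rather than assuming the bound fails and deriving a contradiction), so the two proofs coincide in substance.
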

\begin{proof}
Fix a vertex $v\in U_i$ and some $j\neq i$.
We will show this claim by estimating $d^{(m)}_G(v)$.

First let us estimate the number of $K_m$-copies containing $v$ in $G[V_1,\cdots,V_r]$.
Such copies may contain some vertex in $V_j$ or not.
By Claim \ref{CLM: close balanced}, we have $|V_k|<\frac nr+\theta n$ for each $k\in [r]$.
So the number of such copies containing some vertex in $V_j$ is at most
$$|N(v)\cap V_j|\cdot \binom{r-2}{m-2}\left(\frac nr+\theta n\right)^{m-2},$$
and the number of such copies containing no vertex in $V_j$ is at most $$\binom{r-2}{m-1}\left(\frac nr+\theta n\right)^{m-1}.$$

For each copy of $K_m$ in $G$ that contains $v$, if it is not in $G[V_1,\cdots,V_r]$,
then it contains either some neighbor of $v$ in $V_i$ or an edge in $G[V_k]$ for some $k\neq i$.
The number of $K_m$-copies of the former kind is at most $|N(v)\cap V_i|\cdot n^{m-2}\leq\beta n^{m-1}$,
and in view of \eqref{equa: sum e(V_i)<ep n^2}, the number of the latter kind is at most $\varepsilon n^2\cdot n^{m-3}\leq \varepsilon n^{m-1}$.
This shows that
\begin{equation*}
d_G^{(m)}(v)\leqs(\beta+\varepsilon) n^{m-1}+|N(v)\cap V_j|\cdot \binom{r-2}{m-2}\left(\frac nr+\theta n\right)^{m-2}+\binom{r-2}{m-1}\left(\frac nr+\theta n\right)^{m-1}.
\end{equation*}
Also by Claim \ref{LEM: large deltam}, we may assume that
$$\delta^{(m)}(G)\geqs\delta^{(m)}(T_r(n))=\binom{r-1}{m-1}\left(\frac nr\right)^{m-1}+O(n^{m-2}).$$
Putting the above two inequalities together, we have
$$\binom{r-1}{m-1}\frac 1{r^{m-1}}\leqs \beta+2\varepsilon+\frac{|N(v)\cap V_j|}n\binom{r-2}{m-2}\left(\frac1r+\theta\right)^{m-2}+\binom{r-2}{m-1}\left(\frac1r+\theta\right)^{m-1}.$$
It then follows that there exists some $\xi=\xi(\beta,\varepsilon,\theta)$ with $\lim_{\beta,\varepsilon,\theta\rightarrow0}\xi(\beta,\varepsilon,\theta)=0$ such that $|N(v)\cap V_j|>(\frac1r-\xi) n$.
Finally, recall that $|B_j|\leq |B|<\frac\beta2n$ (or use Claim \ref{CLM: few bad vertices} instead).
Thus by letting $\eta(\varepsilon):=\xi(\beta,\varepsilon,\theta)+\frac{\beta}2,$ we get that $|N(v)\cap U_j|\geqs|N(v)\cap V_j|-|B_j|>(\frac1r-\eta)n,$ completing the proof of Claim \ref{CLM: degree of good is large}.
\end{proof}

\begin{claim}\label{CLM: few inter-edges}
	For every $i\in[r]$, $e(U_i)\leqs\biex(n,H).$ In particular, if $H$ is edge-critical, then $U_1,\cdots,U_r$ are all independent sets.
\end{claim}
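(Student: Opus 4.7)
The plan is to show that $G[U_i]$ contains no member of $\mathcal F_H$ as a subgraph, from which $e(U_i)\leq\biex(|U_i|,H)\leq\biex(n,H)$ follows by the definition of $\biex$. Suppose for contradiction that some $F\in\mathcal F_H$ embeds into $G[U_i]$. By definition of $\mathcal F_H$, there is a proper $(r+1)$-coloring $C_1,\ldots,C_{r+1}$ of $H$ with $F=H[C_1\cup C_2]$, so the images of $C_1\cup C_2$ under this embedding already lie in $U_i$.

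The key step is to extend this partial embedding to a full copy of $H$ in $G$, contradicting $H$-freeness. Index $\{j\in[r]:j\neq i\}=\{i_1,\ldots,i_{r-1}\}$, and aim to embed $C_{2+k}$ into $U_{i_k}$ for each $k=1,\ldots,r-1$. Process the vertices of $C_3\cup\cdots\cup C_{r+1}$ one at a time, in any order. When embedding $x\in C_{2+k}$, we need a vertex of $U_{i_k}$ that is distinct from all previously used vertices and adjacent in $G$ to every already placed image of an $H$-neighbor of $x$. Each such placed neighbor lies in some part $U_p$ with $p\neq i_k$, and by Claim~\ref{CLM: degree of good is large} it has at least $(1/r-\eta)n$ neighbors in $U_{i_k}$, hence at most $(\theta+\eta)n$ non-neighbors in $U_{i_k}$ in view of Claim~\ref{CLM: close balanced}. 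Since fewer than $|V(H)|$ vertices have been placed so far and $|U_{i_k}|\geq|V_{i_k}|-|B|\geq(1/r-\theta-\beta/2)n$, the set of admissible choices has size at least
\[
\left(\tfrac1r-\theta-\tfrac{\beta}{2}\right)n-|V(H)|\bigl((\theta+\eta)n+1\bigr),
\]
which is positive once $\varepsilon$ is chosen sufficiently small compared to $|V(H)|$ in the hierarchy~\eqref{equ:hierarchy}. Hence the greedy embedding succeeds, yielding $H\subseteq G$, a contradiction.

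For the in-particular statement, it suffices to verify $\biex(n,H)=0$ whenever $H$ is edge-critical and $n\geq|V(H)|$. Let $uv$ be a critical edge, so $\chi(H-uv)=r$. Since $H$ itself is not $r$-colorable, any proper $r$-coloring of $H-uv$ places $u$ and $v$ in the same class; call it $C$. Moving $u$ into a new singleton class $\{u\}$ yields a proper $(r+1)$-coloring of $H$, and in this coloring the only neighbor of $u$ in $C$ is $v$. Consequently $H[\{u\}\cup C]\in\mathcal F_H$ consists of the single edge $uv$ together with $|C|-1$ isolated vertices, so any graph on $n\geq|V(H)|$ vertices with even one edge contains this member of $\mathcal F_H$. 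Hence $\biex(n,H)=0$, and the first part of the claim forces $e(U_i)=0$ for every $i\in[r]$.

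The main obstacle is really just bookkeeping for the greedy embedding: we have to ensure that the $O(1)$ many previously placed vertices and the $O((\theta+\eta+\beta)n)$ many forbidden vertices never exhaust the part $U_{i_k}$, whose size is close to $n/r$. This follows easily from the hierarchy $\varepsilon\gg\theta,\eta,\beta\gg 1/n$ together with Claims~\ref{CLM: few bad vertices}, \ref{CLM: close balanced} and \ref{CLM: degree of good is large}, which is where all the structural work of the previous claims gets paid off.
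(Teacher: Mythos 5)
Your proof is correct and follows essentially the same route as the paper: extract a member $F\in\mathcal F_H$ from $G[U_i]$ and use Claims \ref{CLM: close balanced} and \ref{CLM: degree of good is large} (with the bound on $|B|$) to extend it to a copy of $H$ across the other parts, the only cosmetic difference being that you embed the remaining color classes vertex-by-vertex greedily while the paper builds sets $X_2,\cdots,X_r$ of size $|V(H)|$ class-by-class via a common-neighborhood count. Your extra verification that $\biex(n,H)=0$ for edge-critical $H$ is also fine; the paper simply asserts this in the introduction.
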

\begin{proof}
Suppose for a contradiction that say, $e(U_1)>\biex(n,H)$.
Then $G[U_1]$ contains some $F\in\mathcal F_H$. Let $b=|V(H)|$.
We assert that we can find $X_2,\cdots,X_r$ with $X_i\subset U_i$ and $|X_i|=b$ such that $G[V(F),X_2,\cdots,X_r]$ is a complete $r$-partite graph.
If so, then clearly $G[V(F)\cup X_2\cup\cdots\cup X_r]$ contains a copy of $H$, a contradiction.
	
To do this, suppose inductively that for some $i\in\{1,...,r-1\}$, we have obtained $X_2,\cdots,X_i$ such that $G[V(F),X_2,\cdots,X_i]$ is complete $i$-partite.
(For $i=1$, we just view it as the set $V(F)$.)
Then the number of common neighbors of $L_i:=V(F)\cup X_2\cup\cdots\cup X_i$ in $U_{i+1}$ is at least
$$\left(\sum_{v\in L_i}|N(v)\cap U_{i+1}|\right)-(|L_i|-1)|U_{i+1}|>|L_i|(\frac1r-\eta)n-(|L_i|-1)(\frac1r+\theta)n$$
$$\geq \left(\frac1r-|L_i|(\eta+\theta)\right)\cdot n\geq\left(\frac1r-br(\eta+\theta)\right)\cdot n.$$
Here, the first inequality follows from Claim \ref{CLM: degree of good is large} and
the fact $|U_{i+1}|\leqs|V_{i+1}|<(\frac1r+\theta)n$ (by Claim \ref{CLM: close balanced}),
and the last inequality holds as $|L_i|\leq bi\leq br$.
Since $\eta$ and $\theta$ are sufficiently small and $n$ is sufficiently large,
we can find the desired set $X_{i+1}\subset U_{i+1}$ with $|X_{i+1}|=b$,
proving Claim \ref{CLM: few inter-edges}.
\end{proof}

We are ready to prove the upper bound \eqref{equ:N(G,Km)} of $\N(G,K_m)$.
It is clear that every copy of $K_m$ in $G$ either is contained in $G[U_1,\cdots,U_r]$, or contains some edge in $\cup_{i=1}^rE(G[U_i])$, or contains some vertex in $B$.
Since $G[U_1,\cdots,U_r]$ is $K_{r+1}$-free, by Theorem \ref{THM: Erdos complete}, we have
$$\N(G[U_1,\cdots,U_r],K_m)\leq \N(T_r(n),K_m).$$
Since $\sum_{i=1}^re(U_i)\leq r\cdot \biex(n,H)$ (by Claim \ref{CLM: few inter-edges}) and every edge can be contained in at most $n^{m-2}$ copies of $K_m$,
the number of copies of $K_m$ that contain some edge in $\cup_{i=1}^rE(G[U_i])$ is at most
$$r\cdot \biex(n,H)\cdot n^{m-2}=\biex(n,H)\cdot O(n^{m-2}).$$
Lastly, since each vertex can be contained at most $n^{m-1}$ copies of $K_m$, the number of copies of $K_m$ that contain some vertex in $B$ is at most
$$|B|\cdot n^{m-1}\leq K(\sigma(H)-1)\cdot n^{m-1}\leq \biex(n,H)\cdot O(n^{m-2}),$$
where the first inequality follows by Claim \ref{CLM: few bad vertices} and the last inequality holds because of Proposition \ref{Obs: sigma VS biex}.
Putting the above together, we obtain the desired upper bound
\begin{equation*}
\N(G,K_m)\leqs\N(T_r(n),K_m)+\biex(n,H)\cdot O(n^{m-2}).
\end{equation*}
The proof of Theorem \ref{THM: Main Theorem} is completed.

Now suppose $H$ is edge-critical. By Claim \ref{CLM: few bad vertices}, $B=\emptyset$ and so $V(G)=U_1\dot{\cup}\cdots\dot{\cup}U_r$.
By Claim \ref{CLM: few inter-edges}, we see that $U_1,\cdots,U_r$ are all independent sets, implying that $G$ is $r$-partite and thus $K_{r+1}$-free.
Hence by Theorem \ref{THM: Erdos complete}, it holds that $\N(G,K_m)\leq \N(T_r(n),K_m)$, with the equality holds if and only if $G=T_r(n)$.
This proves Corollary \ref{cor:edge-critical}.\qed

\section{Counting complete multipartite graphs}\label{sec:rpartite}
Throughout this section let $r\geq 2$ and $t\geq s$ be fixed integers.
Let $K^{(r)}_{s,t}$ denote the complete $r$-partite graph with one part of size $t$ and the other $r-1$ parts of size $s$.
It is easy to see that Theorem \ref{cor:mulpart} will follow from the coming result.

\begin{thm}\label{THM: MAIN ex(n,K_{s:(r-1);t})}
Let $r\geq 2$ and $t\geq s$ be positive integers. Then the following hold:
\begin{itemize}
\item [(a)] If $t<s+\frac12+\sqrt{2s+\frac14}$, then for sufficiently large $n$,
the unique $n$-vertex $K_{r+1}$-free graph which maximizes the number of copies of $K^{(r)}_{s,t}$ is the Tur\'an graph $T_r(n)$.
\item [(b)] If $t=s+\frac12+\sqrt{2s+\frac14}$, then  $\ex(n,K^{(r)}_{s,t},K_{r+1})=(1+o(1))\cdot \N(T_r(n),K^{(r)}_{s,t})$.
Moreover, in case of $r=2$, $\ex(n,K_{s,t},K_3)\ge\N(T_2(n),K_{s,t})+\Omega(n^{s+t-2})$.
\item [(c)] If $t>s+\frac12+\sqrt{rs+\frac14}$, then there exists a constant $c=c(r,s,t)>0$ such that $\ex(n,K^{(r)}_{s,t},K_{r+1})\geq(1+c)\cdot \N(T_r(n),K^{(r)}_{s,t})$.
\end{itemize}
\end{thm}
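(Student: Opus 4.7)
The proof splits into three arguments matching the three parts of the theorem. Throughout, write
\[
f(n_1,\ldots,n_r) := \sum_{i=1}^r \binom{n_i}{t}\prod_{j\neq i}\binom{n_j}{s}
\]
for the number of $\K$-copies in the complete $r$-partite graph with part sizes $n_1,\ldots,n_r$, and let $N = n/r$.

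For part (c), I would exhibit an unbalanced complete $r$-partite graph beating $T_r(n)$. Let $G_\alpha$ have one part of size $\lfloor \alpha n\rfloor$ and the remaining $r-1$ parts as balanced as possible; being $r$-chromatic, $G_\alpha$ is $K_{r+1}$-free. Writing $n_i = N(1+\epsilon_i)$ with $\sum_i \epsilon_i = 0$ and expanding $\log f$ to second order about the Tur\'an point, the coefficient of $\sum_i \epsilon_i^2$ equals $\bigl((t-s)(t-s-1) - rs\bigr)/(2r)$. When $t > s + \tfrac12 + \sqrt{rs + \tfrac14}$ this coefficient is strictly positive, so a fixed small $|\alpha - 1/r|$ yields a multiplicative improvement of $(1+c)$ for some $c = c(r,s,t) > 0$, proving (c).

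For parts (a) and the upper bound of (b), I would use a stability argument followed by an optimization among complete $r$-partite graphs. Let $G$ be an $n$-vertex $K_{r+1}$-free graph with $\N(G, \K) \geq \N(T_r(n), \K)$. A double counting between $K_r$-copies and $\K$-copies (each $\K$-copy contains $ts^{r-1}$ copies of $K_r$, while each $K_r$-copy extends to at most $O(n^{t+s(r-1)-r})$ copies of $\K$) together with Theorem~\ref{THM: Erdos complete} yields $\N(G, K_r) \geq \N(T_r(n), K_r) - o(n^r)$. Then Theorem~\ref{THM: partition} (with $H = K_{r+1}$, $m = r$) places $G$ within $o(n^2)$ edits of $T_r(n)$; a vertex-degree cleanup in the spirit of Claim~\ref{LEM: large deltam} gives $\delta(G) > \tfrac{3r-4}{3r-1}n$, so Theorem~\ref{LEM: Andrasfai-Erdos-Sos} forces $G$ to be $r$-partite. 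Since adding any cross-part edge cannot destroy a $\K$-copy, we may assume $G$ is complete $r$-partite, reducing the problem to optimizing $f(n_1,\ldots,n_r)$ subject to $\sum n_i = n$.

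To optimize $f$, I would analyze the one-vertex balancing swap $(n_i, n_j) \mapsto (n_i - 1, n_j + 1)$ between two parts with $n_i > n_j + 1$. Decomposing the finite difference as $P\Delta_1 + S\Delta_2$, where $P = \prod_{k\neq i,j}\binom{n_k}{s}$ and $S = \sum_{k \neq i,j}\binom{n_k}{t}\prod_{l \neq i,j,k}\binom{n_l}{s}$, and computing $\Delta_1, \Delta_2$ via Pascal's rule, I would show that the sign is ultimately governed by $(s+t) - (t-s)^2$; strict positivity (exactly the (a) condition) implies every swap strictly improves $f$, so iterating converges to $T_r(n)$ as the unique maximizer. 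The upper bound in (b) corresponds to the boundary where this factor vanishes, giving only $(1+o(1))\N(T_r(n), \K)$. For the lower bound of (b) with $r = 2$, I would use that $g(\alpha) = \alpha^t(1-\alpha)^s + \alpha^s(1-\alpha)^t$ has vanishing second derivative at $\alpha = 1/2$ at the threshold but strictly positive fourth derivative (explicit computation); taking $\alpha n = n/2 + \lfloor c\sqrt n\rfloor$ and $G = K_{\alpha n, (1-\alpha)n}$ (triangle-free) then gives $\N(G, K_{s,t}) - \N(T_2(n), K_{s,t}) = \Omega(n^{s+t-2})$.

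The main obstacle is the threshold calibration in the swap analysis for (a): pinning down the sharp sign condition $(s+t) > (t-s)^2$ rather than only the weaker local-max condition $(t-s)(t-s-1) < rs$ that a naive second-order Taylor expansion delivers. This requires carefully tracking both the leading $P\Delta_1$-contribution and the subleading $S\Delta_2$-contribution in the finite-difference decomposition, and crucially verifying that the worst-case size profile of the remaining $r-2$ parts still forces the stronger threshold. A secondary delicate point is the stability reduction: the double counting must extract a quantitatively strong enough $\N(G, K_r)$-lower bound from the $\K$-count so that Theorem~\ref{THM: partition} can be invoked with a sufficient error rate to feed the subsequent Andr\'asfai--Erd\H{o}s--S\'os step.
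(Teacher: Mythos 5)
Your treatment of part (c) and the ``moreover'' lower bound in (b) essentially matches the paper's (unbalanced complete $r$-partite construction plus a second-order and a fourth-order Taylor argument, respectively), and you have correctly identified the sharp threshold for (a) as $(t-s)^2 < s+t$. But your route to the upper bounds in (a) and (b) has a genuine gap at the very first step. The double counting gives $ts^{r-1}\,\N(G,\K) \le \N(G,K_r)\cdot\max_R|\{\K\text{-copies}\supset R\}|$, and the only universal bound on the maximum number of extensions of a $K_r$-copy is $\binom{n}{t-1}\binom{n}{s-1}^{r-1}$. Feeding in $\N(G,\K)\ge\N(T_r(n),\K)$ then yields only $\N(G,K_r)\ge c\,n^r$ with $c$ smaller than $\N(T_r(n),K_r)/n^r$ by a factor on the order of $r^{t+(r-1)s-r-1}$, \emph{not} $\N(T_r(n),K_r)-o(n^r)$. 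The point is that the maximum number of $\K$-extensions of a single $K_r$-copy in an arbitrary $K_{r+1}$-free graph can exceed the \emph{average} in $T_r(n)$ by an unbounded constant factor, so averaging loses too much to invoke Theorem~\ref{THM: partition}. Moreover, even granting stability, the jump from ``$G$ is $o(n^2)$ edits from $T_r(n)$'' to ``$\delta(G)>\frac{3r-4}{3r-1}n$ after a cleanup'' is not automatic; the paper extracts the ordinary minimum-degree bound from a minimum \emph{$\K$-degree} bound via a nontrivial analysis of the function $F_{r,s,t}$ (Lemma~\ref{LEM: DeltaFr>0}), which mere stability does not supply.

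The paper's actual route avoids Theorem~\ref{THM: partition} entirely and proceeds by induction on $r$: iteratively delete vertices of low $\K$-degree (as in your cleanup idea, but for $d_G(v,\K)$ rather than $d_G(v)$ or $d_G^{(m)}(v)$) to arrive at $H$ with $\delta(H,\K)\ge F_{r,s,t}(\lfloor\frac{r-1}{r}\ell\rfloor,\ell)$; for a vertex $v$ of minimum ordinary degree $d_v$, use that $H[N_H(v)]$ is $K_r$-free together with the inductive bound on $\ex(d_v,K_{s,t}^{(r-1)},K_r)$ to show $d_H(v,\K)\le F_{r,s,t}(d_v,\ell)+\mu\cdot o(\ell^{(r-1)s+t-1})$; then Lemma~\ref{LEM: DeltaFr>0} forces $d_v>\frac{3r-4}{3r-1}\ell$, Andr\'asfai--Erd\H{o}s--S\'os gives $r$-partiteness, and Lemma~\ref{LEM: r-partite T_r(n) is best} finishes. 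Your downstream plan (Andr\'asfai--Erd\H{o}s--S\'os, then a swap optimization over complete $r$-partite graphs) is aligned with the paper's, and your swap heuristic is morally Lemma~\ref{LEM: r-partite T_r(n) is best}, but that optimization is itself technical (all of Section~6.2 and Appendix~C); the real missing piece is the path to high minimum degree, for which you would need something like Lemma~\ref{LEM: DeltaFr>0} plus the induction on $r$, not the double count.
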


In this section we will prove Theorem \ref{THM: MAIN ex(n,K_{s:(r-1);t})}, by assuming Lemmas \ref{LEM: DeltaFr>0} and \ref{LEM: r-partite T_r(n) is best} (see below; their proofs will be postponed to the next section).
Before introducing the lemmas, we will need to give some notations.

\begin{Definition}\label{def:F(a,n)}
For integers $a\leq n$, let $G^{r}_{a,n}$ be the complete $r$-partite graph $G$ on $n$ vertices with parts $V_1, V_2, ...,V_r$
such that $G[V_2\cup\cdots\cup V_r]=T_{r-1}(a)$.
Let $F_{r,s,t}(a,n)$ be the number of copies of $K^{(r)}_{s,t}$ in $G^{r}_{a,n}$ each of which contains a fixed vertex in $V_1$.
\end{Definition}

\noindent Let $\lambda_{s,t}$ be $\frac12$ if $s=t$ and $1$ otherwise.
Then $F_{r,s,t}(a,n)$ can be expressed as
\begin{equation}\label{equ:F1}
\lambda_{s,t}\cdot\left[\binom{n-1-a}{s-1}\cdot \N(T_{r-1}(a),K^{(r-1)}_{s,t})+\binom{n-1-a}{t-1}\cdot \N(T_{r-1}(a),K^{(r-1)}_{s,s})\right].
\end{equation}
In case that $a=\lfloor\frac{r-1}{r}n\rfloor$, we see that $G^r_{a,n}=T_r(n)$ and $G^r_{a,n}\backslash\{v\}=T_r(n-1)$ for any $v\in V_1$. Hence we have
\begin{equation}\label{equa:T_r(n)-T_r(n-1)=F_rst}
\N(T_r(n),K^{(r)}_{s,t})-\N(T_r(n-1),K^{(r)}_{s,t})=F_{r,s,t}\left(\left\lfloor\frac{r-1}{r}n\right\rfloor,n\right).
\end{equation}

\begin{lem}\label{LEM: DeltaFr>0} %Suppose that $s\leqs t\le s+\frac12+\sqrt{2s+\frac14}$.
\begin{itemize}
	\item [(i)] If $s\leqs t< s+\frac12+\sqrt{2s+\frac14}$, then the following holds for sufficiently large $n$.
If $F_{r,s,t}(\lfloor\frac{r-1}{r}n\rfloor,n)\le F_{r,s,t}(d,n)$, then $d\ge\lfloor\frac{r-1}{r}n\rfloor$.
	\item[(ii)] If $t=s+\frac12+\sqrt{2s+\frac14}$, then for any $\varepsilon>0$, there exists a real $\eta>0$ such that the following holds for sufficiently large $n$.
	If $F_{r,s,t}(\lfloor\frac{r-1}{r}n\rfloor,n)\le F_{r,s,t}(d,n)+\eta n^{(r-1)s+t-1}$, then $d\ge\frac{r-1}{r}n-\varepsilon n$.
\end{itemize}
\end{lem}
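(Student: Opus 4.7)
The plan is to pass to a continuous approximation of $F_{r,s,t}(\cdot, n)$, reduce the lemma to a calculus question about a polynomial $g$, and then handle the discretization near $d^* := \lfloor(r-1)n/r\rfloor$.

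First, using \eqref{equ:F1} together with the asymptotic estimates of $\N(T_{r-1}(d), K^{(r-1)}_{s,t})$ and $\N(T_{r-1}(d), K^{(r-1)}_{s,s})$ from Proposition \ref{prop:turan1}, I would write
\begin{equation*}
F_{r,s,t}(d,n) = n^L g(x) + O(n^{L-1}),
\end{equation*}
where $L := (r-1)s+t-1$, $x := d/n$, and $g$ is an explicit polynomial depending on $r,s,t$. For $s \neq t$,
\begin{equation*}
g(x) = A(1-x)^{s-1}x^{s(r-2)+t} + B(1-x)^{t-1}x^{s(r-1)},
\end{equation*}
with positive constants $A, B$ satisfying $A/B = s(r-1)^{1+s-t}/t$; the $s=t$ case collapses to a single-term polynomial $C(1-x)^{s-1}x^{s(r-1)}$ that is unimodal with maximum at $s(r-1)/(sr-1) > (r-1)/r$.

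A direct computation of $g'((r-1)/r)$ (using $1-(r-1)/r=1/r$ and the ratio $A/B$) shows, after cancellation, that its sign agrees with that of $s(t+r-1-s)-t(t-s-1)$, which after expansion is positive iff $t^2-(2s+1)t+s^2-s(r-1)<0$, i.e., iff $t < s+\tfrac12+\sqrt{sr+\tfrac14}$. The hypothesis $(t-s)(t-s-1) < 2s$ of part (i) implies this strictly (since $r\geq 2$), giving $g'((r-1)/r) \geq c_0$ for some $c_0 = c_0(r,s,t)>0$. To promote this local statement to $g(x) < g((r-1)/r)$ for all $x \in [0,(r-1)/r)$, I would establish that $g$ is unimodal on $[0,1]$ with unique maximum at some $x^* \geq (r-1)/r$: factor $g(x) = x^{s(r-1)}(1-x)^{s-1}h(x)$ with $h(x) = Ax^{t-s}+B(1-x)^{t-s}$, which is linear (for $t-s\leq 1$) or convex (for $t-s\geq 2$), then analyze $(\log g)'(x) = s(r-1)/x - (s-1)/(1-x) + h'(x)/h(x)$ to show that it has at most one zero on $(0,1)$.

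The translation to the discrete setting is routine for $d \leq d^* - \varepsilon_0 n$ (with $\varepsilon_0$ a small absolute constant): the continuous gap $g((r-1)/r)-g(d/n) = \Omega(1)$ multiplied by $n^L$ dominates the $O(n^{L-1})$ approximation error. The main obstacle is the narrow range $d^* - \varepsilon_0 n < d < d^*$, where the continuous gap is only $\Omega((d^*-d)/n)$, of the same order as the error once $d^* - d$ is bounded. To overcome this I would expand $F_{r,s,t}(d,n)$ via \eqref{equ:F1} to one further subleading order in $1/n$ and verify positivity of $F_{r,s,t}(d^*,n)-F_{r,s,t}(d,n)$ term by term using binomial identities. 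For part (ii), $t = s+\tfrac12+\sqrt{2s+\tfrac14}$: when $r \geq 3$, the strict inequality $g'((r-1)/r)>0$ still holds (since $2s < sr$) and part (i)'s argument directly applies; when $r=2$ (where $g'(1/2)=0$), I would differentiate once more to verify $g''(1/2)<0$, obtaining the quadratic bound $g(1/2)-g(x) \geq c_2(x-1/2)^2$ near $x=1/2$, which converts the hypothesis $F_{r,s,t}(d,n) \geq F_{r,s,t}(d^*,n)-\eta n^L$ into $|d/n - 1/2| = O(\sqrt{\eta}) \leq \varepsilon$ for $\eta$ sufficiently small.
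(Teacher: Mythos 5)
Your reduction to the polynomial $g$ and the computation of the sign of $g'((r-1)/r)$, including the threshold $t<s+\tfrac12+\sqrt{rs+\tfrac14}$, are correct, and the continuous reformulation is a genuinely different route from the paper, which works entirely with the discrete increment $\Delta(a):=F_{r,s,t}(a+1,n)-F_{r,s,t}(a,n)$, writes $\Delta(a)=M(a)\bigl[H(z)+o(1)\bigr]$ with $z=(n-a)/a$, and then proves $H$ is strictly increasing with $H\bigl(\tfrac1{r-1}\bigr)\geq 0$. The paper's choice buys a clean treatment of the discretization: since $M(a)=\Theta(n^{(r-1)s+t-2})$ and $H(z)>0$ is a genuine constant (in case (i)), the multiplicative $o(1)$ error is automatically negligible even when $d$ is within $O(1)$ of $\lfloor(r-1)n/r\rfloor$, which is exactly the regime your additive $O(n^{L-1})$ error term cannot resolve.

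That difficulty is the first of two genuine gaps. Your plan to ``expand to one further subleading order and verify positivity term by term using binomial identities'' is not a proof; as you note, the continuous gap $g((r-1)/r)-g(d/n)=\Theta((d^*-d)/n)$ is of the same order as the error, and adding one more term of a $1/n$-expansion still leaves $F(d^*,n)-F(d,n)$ and the approximation error at the common scale $\Theta(n^{L-1})$, so the comparison is inconclusive without a genuinely different organization of the computation (which is precisely what telescoping $F(d^*,n)-F(d,n)=\sum_{a=d}^{d^*-1}\Delta(a)$ achieves). The second gap is a concrete error: for $r=2$ and $t=s+\tfrac12+\sqrt{2s+\tfrac14}$, one has not only $g'(1/2)=0$ but also $g''(1/2)=0$. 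Writing $g(x)=(1-x)^{s-1}x^{s(r-1)}\bigl(Ax^{q}+B(1-x)^{q}\bigr)$ with $q=t-s$ and $A/B=s/t$, a direct computation gives $g''(1/2)=(1/2)^{s+t-3}\tfrac{2B}{t}\bigl(s+t-q^2\bigr)$, and at the boundary $q^2-q=2s$, i.e.\ $q^2=s+t$, this vanishes identically. Consequently the quadratic bound $g(1/2)-g(x)\geq c_2(x-1/2)^2$ you invoke is false, and the deduction $|d/n-1/2|=O(\sqrt\eta)$ collapses. (The true local behavior is that $g'(x)\sim\tfrac12 g'''(1/2)(x-1/2)^2$ with $g'''(1/2)>0$, so $g$ is in fact increasing \emph{through} $x=1/2$; the correct quantitative statement is of the form $g(1/2)-g(1/2-\varepsilon)=\Theta(\varepsilon^3)$, which still yields the lemma but requires a different argument.) Finally, the unimodality of $g$ on $[0,1]$ is asserted but not established, and, as the degenerate boundary case shows, it is not actually needed: what matters is only that $g$ is strictly increasing on $[\gamma,(r-1)/r]$, which is exactly what the paper proves via the monotonicity of $H$.
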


%When $n\ge (r-1)s+t$, it is obvious that if an $r$-partite graph $G$ contains the maximum number of  $K^{(r)}_{s,t}$-copies among all $n$-vertex $r$-partite graphs, then $G$ must be complete $r$-partite. We have the following lemma.

\begin{lem}\label{LEM: r-partite T_r(n) is best}
For $s\leq t\le s+\frac12+\sqrt{2s+\frac14}$ and sufficiently large $n$,
let $G$ be an $n$-vertex $r$-partite graph which maximizes the number of copies of $K^{(r)}_{s,t}$.
Then $G$ is a complete $r$-partite graph with each part of size $\frac nr+o(n)$.
Moreover, if $t<s+\frac12+\sqrt{2s+\frac14}$, then $G=T_r(n)$ is unique.
\end{lem}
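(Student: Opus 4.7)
Since adding any edge between two different parts of an $r$-partite graph keeps it $r$-partite and cannot decrease the number of $K^{(r)}_{s,t}$-copies, I may assume $G=K_{n_1,\ldots,n_r}$ is complete $r$-partite, so that
$$
\N(G,K^{(r)}_{s,t})=\lambda_{s,t}\sum_{i=1}^r\binom{n_i}{t}\prod_{j\ne i}\binom{n_j}{s}.
$$

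My plan is to first reduce to the case $G=G^r_{a,n}$, in which the $r-1$ smaller parts form a balanced $T_{r-1}(a)$ and $|V_1|=n-a$, and then apply Lemma~\ref{LEM: DeltaFr>0} to locate $a$. For the reduction, fix one designated part $V_1$ of size $n-a$ and write
$$
\N(G,K^{(r)}_{s,t})=\lambda_{s,t}\Bigl[\binom{n-a}{t}\N(K_{n_2,\ldots,n_r},K_{r-1}(s))+\binom{n-a}{s}\N(K_{n_2,\ldots,n_r},K^{(r-1)}_{s,t})\Bigr].
$$
The first coefficient $\N(K_{n_2,\ldots,n_r},K_{r-1}(s))=\prod_{j\ge 2}\binom{n_j}{s}$ is, under $\sum_{j\ge 2}n_j=a$, uniquely maximized by the balanced $T_{r-1}(a)$-distribution via a pairwise-swap argument. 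The second coefficient $\N(K_{n_2,\ldots,n_r},K^{(r-1)}_{s,t})$ is likewise maximized (asymptotically, in the boundary case) by the $T_{r-1}(a)$-distribution, by applying the present lemma inductively at level $r-1$; the base case $r=2$ is trivial since there is only one part $V_2$ and no further balancing is needed. Consequently I may assume $G=G^r_{a,n}$, exactly in the strict regime and up to an $o(n)$-perturbation of the $n_j$'s in the boundary regime.

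Finally, I apply Lemma~\ref{LEM: DeltaFr>0}. From the telescoping identity
$$
\N(G^r_{a,n},K^{(r)}_{s,t})=\sum_{m=a+1}^{n}F_{r,s,t}(a,m),
$$
obtained by adding vertices to $V_1$ one at a time, a vertex-swap between $V_1$ and the smallest of $V_2,\ldots,V_r$ converts $G^r_{a,n}$ into (essentially) $G^r_{a\pm1,n}$, and the induced change in $\N$ is controlled by $F_{r,s,t}(a,n)$-type quantities up to lower-order error. Combined with optimality of $G$, Lemma~\ref{LEM: DeltaFr>0}(i) in the strict case forces $a=\lfloor(r-1)n/r\rfloor$, whence $G=T_r(n)$ uniquely; Lemma~\ref{LEM: DeltaFr>0}(ii) in the boundary case yields $a=(r-1)n/r+o(n)$, which together with the (approximate) $T_{r-1}(a)$-balance of $V_2,\ldots,V_r$ gives every part of $G$ size $n/r+o(n)$. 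The main technical obstacle is converting optimality in $a$ into a clean $F$-comparison amenable to Lemma~\ref{LEM: DeltaFr>0}: that lemma controls $F_{r,s,t}(d,n)$ at fixed $n$ while the $\N$-sum ranges over $m\le n$, so one must isolate the dominant summand $F_{r,s,t}(a,n)$ via the swap and absorb the smaller-$m$ tail as a controlled error, while also bookkeeping the discretization $a\mapsto a\pm1$ caused by rebalancing $V_2,\ldots,V_r$.
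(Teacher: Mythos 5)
Your high-level plan — reduce to a complete $r$-partite graph, then to the special form $G^r_{a,n}$ by inductively balancing the $r-1$ smaller parts, then locate $a$ — is plausible in outline, but it has a genuine gap at the central step, which you yourself flag as ``the main technical obstacle'' and do not resolve.

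The issue is that Lemma~\ref{LEM: DeltaFr>0} controls the function $a\mapsto F_{r,s,t}(a,n)$, i.e., the count of $K^{(r)}_{s,t}$-copies through a \emph{single} vertex of $V_1$ in $G^r_{a,n}$, whereas optimality of your $G=G^r_{a,n}$ tells you something about the function $a\mapsto\N(G^r_{a,n},K^{(r)}_{s,t})$. These are different objects. Your telescoping identity $\N(G^r_{a,n})=\sum_{m=a+1}^n F_{r,s,t}(a,m)$ is correct, but the first difference in $a$, $\N(G^r_{a+1,n})-\N(G^r_{a,n})=\sum_{m\ge a+2}F_{r,s,t}(a+1,m)-\sum_{m\ge a+1}F_{r,s,t}(a,m)$, is not $\pm F_{r,s,t}(a,n)$ up to a lower-order error; it is a difference of two $n$-term sums, and the claim that one can ``isolate the dominant summand $F_{r,s,t}(a,n)$'' is precisely what needs a proof. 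Without that, the hypothesis of Lemma~\ref{LEM: DeltaFr>0} is never established, so you never get to conclude $a\ge\lfloor\frac{r-1}{r}n\rfloor$. (Lemma~\ref{LEM: DeltaFr>0} is used in the paper for the proof of Theorem~\ref{THM: MAIN ex(n,K_{s:(r-1);t})}, where $F_{r,s,t}(d_v,\ell)$ arises naturally as an upper bound on the clique-count through a minimum-degree vertex; it is not invoked in the paper's proof of this lemma.) A secondary gap: in the boundary case $t=s+\frac12+\sqrt{2s+\frac14}$, the inductive hypothesis only gives $o(n)$-balance of $V_2,\dots,V_r$, so $G$ is not exactly $G^r_{a,n}$, and you would also need to show that the argument locating $a$ is stable under such perturbations.

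The paper's proof bypasses both problems by working with the general $K_{a_1,\dots,a_r}$ directly rather than reducing to $G^r_{a,n}$. It uses the single-swap optimality condition $\Delta g(\vec a)=g(a_1+1,\dots,a_r-1)-g(\vec a)\le 0$ (Proposition~\ref{prop:g(va)}), factors $h(\vec a)\Delta g(\vec a)$ into a closed form (Proposition~\ref{CLM: h(a)Dg(a)=}), bounds it below by $H(\hat x,\hat\alpha)\cdot p(\hat x)$ where $\hat\alpha=(a_r-a_1)/(a_1-s)$ (Proposition~\ref{CLM: H(x,a)<0}), and then derives a contradiction from the positivity of $H(x,\alpha)$ on the relevant ranges (Proposition~\ref{LEM: Key tech lem to pv complete case}) whenever $a_r-a_1$ is large (or $\ge 2$ in the strict case). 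That route requires no induction on $r$, no reduction to $G^r_{a,n}$, and no appeal to Lemma~\ref{LEM: DeltaFr>0}. If you wish to pursue your plan, you would essentially have to re-derive the analogue of $\Delta g\le 0$ for $\N(G^r_{a,n})$ as a function of $a$ and show it has the right sign structure — which amounts to redoing the $H(x,\alpha)$ analysis in a different parameterization, not avoiding it.
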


Now we are in a position to prove Theorem \ref{THM: MAIN ex(n,K_{s:(r-1);t})}.

\begin{proof}[Proof of Theorem \ref{THM: MAIN ex(n,K_{s:(r-1);t})} (Assuming Lemmas \ref{LEM: DeltaFr>0} and \ref{LEM: r-partite T_r(n) is best}).]
We first prove the ``moreover" part of $(b)$ and the case $(c)$, by indicating that
some complete $r$-partite graphs have more copies of $K_{s,t}^{(r)}$ than the Tur\'an graphs $T_r(n)$.
For the ``moreover" part of $(b)$, we have $t=s+\frac12+\sqrt{2s+\frac14}$ and $r=2$.
By some tedious but straightforward calculations, one can show for $x=\Theta(\sqrt n)$ that
\begin{equation*}
\N(K_{\frac n2-x,\frac n2+x},K_{s,t})-\N(T_2(n),K_{s,t})=2st\left(\frac n2\right)^{s+t-3}x^2-\frac{2st}{3}\left(\frac n2\right)^{s+t-4}x^4+o(n^{s+t-2}).
\end{equation*}
By letting $x=\frac{\sqrt{3n}}{2}+o(\sqrt n)$, the desired inequality follows by
\begin{equation*}
\ex(n,K_{s,t},K_3)\geq \N(K_{\frac n2-x,\frac n2+x},K_{s,t})\geq \N(T_2(n),K_{s,t})+\left(\frac{3st}{2}+o(1)\right)\left(\frac n2\right)^{s+t-2}.
\end{equation*}
For the case $(c)$, let $t>s+\frac12+\sqrt{rs+\frac14}$ and consider $K_{x_1n,\cdots,x_rn}$, where $x_i=x\in (0,\frac1{r-1})$ for $i\in [r-1]$ and $x_r=1-(r-1)x$.
It is not hard to see that
\begin{eqnarray*}
	\N(K_{x_1n,\cdots,x_rn},K^{(r)}_{s,t})=\sum_{i=1}^r\binom{x_in}{t}\prod_{j\neq i}\binom{x_jn}{s}=\frac{e^{F(x)}+o(1)}{t!(s!)^{r-1}}n^{(r-1)s+t},
\end{eqnarray*}
where $F(x)=s\log [x^{r-1}-(r-1)x^r]+\log [(r-1)x^{t-s}+(1-(r-1)x)^{t-s}].$ In particular,
$$\N(T_r(n),K^{(r)}_{s,t})=\frac{e^{F(\frac1r)}+o(1)}{t!(s!)^{r-1}}n^{(r-1)s+t}.$$
Therefore to prove the case $(c)$, it suffices to show that $\frac1r$ is not a maximum point of $F(x)$ in the interval $(0,\frac1{r-1})$;
and further, it is enough to show $F''(\frac1r)>0$.
This indeed is the case, as by some routine calculations one can show that
\begin{equation*}
F''\left(\frac1r\right)=r^2(r-1)\cdot [(t-s)^2-t-s(r-1)]>0,
\end{equation*}
where the inequality holds by $r\geq 2$ and $t>s+\frac12+\sqrt{rs+\frac14}$.

In the rest of the proof we assume $s\leq t\leq s+\frac12+\sqrt{2s+\frac14}$.
We will apply induction on $r$ to prove the remaining statements of Theorem \ref{THM: MAIN ex(n,K_{s:(r-1);t})},
namely for sufficiently large $n$,
\begin{itemize}\label{equ:d}
\item[(a).] $T_r(n)$ uniquely attains the maximum $\ex(n,K^{(r)}_{s,t},K_{r+1})$ if $s\le t< s+\frac12+\sqrt{2s+\frac14}$;
\item[(b).] $\ex(n,K^{(r)}_{s,t},K_{r+1})=\N(T_r(n),K^{(r)}_{s,t})+o(n^{(r-1)s+t})$ if $t= s+\frac12+\sqrt{2s+\frac14}$.
\end{itemize}
For the case $r=1$, we view $K^{(r)}_{s,t}$ and $T_r(n)$ as graphs with empty edge set on $t$ vertices and $n$ vertices respectively,
and then items (a) and (b) holds trivially.
Now suppose that these two items hold for the case $r-1$.

Let $n$ be sufficiently large, $\varepsilon>0$ be sufficiently small, and $\eta$ be obtained from Lemma \ref{LEM: DeltaFr>0} (ii)
such that $\frac{r-1}r-\varepsilon>\frac{3r-4}{3r-1}$.
Let $G$ be an $n$-vertex $K_{r+1}$-free graph which maximizes the number of copies of $K^{(r)}_{s,t}$.
So we have
\begin{equation}\label{equ: N(G,Kst)}
\N(G,K^{(r)}_{s,t})=\ex(n,K^{(r)}_{s,t},K_{r+1})\geq \N(T_r(n),K^{(r)}_{s,t}).
\end{equation}

We then recursively define a sequence of graphs $G_i$'s as following.
Let $G_n:=G$. For $i\leq n$, if there is some vertex $v_i\in V(G_i)$ with $d_{G_i}(v_i,K^{(r)}_{s,t})\leq \delta_i-1$,
where $$\delta_i:=\N(T_r(i),K^{(r)}_{s,t})-\N(T_r(i-1),K^{(r)}_{s,t}), $$
then let $G_{i-1}=G_i\backslash \{v_i\}$ and continue; otherwise, terminate.
Suppose this recursive process stops at $H:=G_\ell$ for some $\ell\leq n$.
Then $H$ has $\ell$ vertices with $\delta(H,K^{(r)}_{s,t})\geq \delta_\ell$ and
\begin{align}
\N(H, K^{(r)}_{s,t})&=\N(G, K^{(r)}_{s,t})-\sum_{i=\ell+1}^n d_{G_i}(v_i,K^{(r)}_{s,t})\label{equ:N(H)}\\
&\geq\N(T_r(n),K^{(r)}_{s,t})-\sum_{i=\ell+1}^n \delta_i+(n-\ell)=\N(T_r(\ell),K^{(r)}_{s,t})+(n-\ell).
\end{align}
Assume that $n\geq n_0+n_0^{(r-1)s+t}$ for some sufficiently large $n_0$.
We claim that $\ell\geq n_0$;
as otherwise $n_0>\ell$, from which it follows that $$n_0^{(r-1)s+t}>\N(H,K^{(r)}_{s,t})\geq \N(T_r(\ell),K^{(r)}_{s,t})+(n-\ell)\geq n-n_0\geq n_0^{(r-1)s+t},$$ a contradiction.

Let $v\in V(H)$ have minimum degree $d_v$ in $H$.
We claim that $d_H(v,K^{(r)}_{s,t})$ is at most
\begin{equation}\label{equa: d<Frst}
\lambda_{s,t}\cdot \left[\binom{\ell-1-d_v}{s-1}\cdot \ex(d_v,K^{(r-1)}_{s,t},K_{r})+\binom{\ell-1-d_v}{t-1}\cdot \ex(d_v,K^{(r-1)}_{s,s},K_{r})\right].
\end{equation}
Note that as $H$ is $K_{r+1}$-free, $H[N_H(v)]$ is $K_r$-free.
Every $K^{(r)}_{s,t}$-copy $T$ in $H$ containing $v$ must contain either $(r-2)s+t$ vertices in $N_H(v)$ which induce a copy of $K^{(r-1)}_{s,t}$,
or $(r-1)s$ vertices in $N_H(v)$ which induce a copy of $K^{(r-1)}_{s,s}$.
Moreover, if the former case occurs, then the other $s-1$ vertices of $T$ must be in $V(H)\backslash (N_H(v)\cup \{v\})$,
as otherwise it will lead to a copy of $K_r$ in $H[N_H(v)]$;
similarly, if the later one occurs, then the other $t-1$ vertices of $T$ must be in $V(H)\backslash (N_H(v)\cup\{v\})$.
This justifies the claim.

Let $\mu=0$ if $s\leq t<s+\frac12+\sqrt{2s+\frac14}$, and $\mu=1$ otherwise.
By \eqref{equa:T_r(n)-T_r(n-1)=F_rst}, we have
\begin{equation}\label{equ:d_H}
d_H(v,K^{(r)}_{s,t})\geq \delta(H,K^{(r)}_{s,t})\geq \delta_\ell=F_{r,s,t}\left(\left\lfloor\frac{r-1}{r}\ell\right\rfloor,\ell\right)\geq \Omega(\ell^{(r-1)s+t-1}).
\end{equation}
Then by \eqref{equa: d<Frst}, $\ell^{s-1}d_v^{(r-2)s+t}+\ell^{t-1}d_v^{(r-1)s}\geq d_H(v,K^{(r)}_{s,t})\geq \Omega(\ell^{(r-1)s+t-1}),$
which implies that $d_v=\Omega(\ell)=\Omega(n_0)$ is sufficiently large.
By our induction, it follows that
\begin{equation*}
\ex(d_v,K^{(r-1)}_{s,t},K_{r})=\N(T_{r-1}(d_v),K^{(r-1)}_{s,t})+\mu\cdot o(d_v^{(r-2)s+t-1}).
\end{equation*}
This, together with \eqref{equa: d<Frst} and \eqref{equ:F1} (i.e., the definition  of $F_{r,s,t}$), implies that
\begin{equation*}\label{equa: d<Frst2}
d_H(v,K^{(r)}_{s,t})\leqs F_{r,s,t}(d_v,\ell)+\mu\cdot o(\ell^{(r-1)s+t-1}).
\end{equation*}
By \eqref{equ:d_H}, for sufficiently large $\ell$ (as $\ell\geq n_0$), we have
\begin{equation*}
F_{r,s,t}\left(\left\lfloor\frac{r-1}{r}\ell\right\rfloor,\ell\right)\leqs F_{r,s,t}(d_v,\ell)+\mu\cdot \eta\cdot\ell^{(r-1)s+t-1},
\end{equation*}
where $\eta$ is obtained from Lemma \ref{LEM: DeltaFr>0} (ii).
Applying Lemma \ref{LEM: DeltaFr>0}, we obtain that
the minimum degree $\delta(H)=d_v\geq (\frac{r-1}{r}-\varepsilon) \ell> \frac{3r-4}{3r-1}\ell.$
As $H$ is an $\ell$-vertex $K_{r+1}$-free graph,
by Theorem \ref{LEM: Andrasfai-Erdos-Sos} we see that $H$ is $r$-partite.
Then Lemma \ref{LEM: r-partite T_r(n) is best} shows that
\begin{equation*}
\N(H,K^{(r)}_{s,t})\leqs \N(T_r(\ell),K^{(r)}_{s,t})+\mu\cdot o(\ell^{(r-1)s+t}),
\end{equation*}
where the equality holds for $\mu=0$ if and only if $H=T_r(\ell)$.
By \eqref{equ: N(G,Kst)} and \eqref{equ:N(H)}, we have
\begin{align*}
\N(T_r(n),K^{(r)}_{s,t})&\leq \N(G, K^{(r)}_{s,t})=\N(H, K^{(r)}_{s,t})+\sum_{i=\ell+1}^n d_{G_i}(v_i,K^{(r)}_{s,t})\\
&\leq \N(T_r(\ell),K^{(r)}_{s,t})+\sum_{i=\ell+1}^n \delta_i+\mu\cdot o(\ell^{(r-1)s+t})-(n-\ell)\\
&= \N(T_r(n),K^{(r)}_{s,t}) +\mu\cdot o(\ell^{(r-1)s+t})-(n-\ell).
\end{align*}
If $s\leq t<s+\frac12+\sqrt{2s+\frac14}$ (that is, $\mu=0$),
then it is easy to see that $n=\ell$, $G=H$ and $\N(H, K^{(r)}_{s,t})=\N(T_r(n),K^{(r)}_{s,t})$;
and in this case Lemma \ref{LEM: r-partite T_r(n) is best} also shows that $G=H=T_r(n)$ is unique.
For the case $t=s+\frac12+\sqrt{2s+\frac14}$, it is also easy to see that $\N(G, K^{(r)}_{s,t})=\N(T_r(n),K^{(r)}_{s,t})+o(n^{(r-1)s+t})$.
The proof of Theorem \ref{THM: MAIN ex(n,K_{s:(r-1);t})} is completed.
\end{proof}

\section{Two Lemmas}
Here we prove Lemmas \ref{LEM: DeltaFr>0} and \ref{LEM: r-partite T_r(n) is best}.
Throughout this section, let $r, s, t$ be fixed integers such that $r\geq 2$ and $s\leq t\leq s+\frac12+\sqrt{2s+\frac14}$, and let $n$ be sufficiently large.

\subsection{Proof of Lemma \ref{LEM: DeltaFr>0}}
%This section will be devoted to the proof of Lemma \ref{LEM: DeltaFr>0}.
%We start by outlining the proof. First we show in Lemma \ref{LEM: d>Omega(n)} that there is a constant $\gamma>0$ such that $d\ge\gamma n$ holds in both cases (i) and (ii).
%For the case (i), it suffices to show that $F(\lfloor\frac{r-1}{r}n\rfloor)$ is the unique maximum of $F(a)$
%for $a\in[\gamma n,\lfloor\frac{r-1}{r}n \rfloor]$; to do so, we will prove in Lemma \ref{LEM: Frst(a+1)>Frst(a) when a<1-1/r-eps} (i)
%that $F(a)$ is strictly increasing in $[\gamma n,\lfloor\frac{r-1}{r}n \rfloor]$.
%For the case (ii), we show in Lemma \ref{LEM: Frst(a+1)>Frst(a) when a<1-1/r-eps} (ii) that $F(a)$ is increasing in $[\gamma n,(\frac{r-1}{r}-\varepsilon)n]$,
%and then in Lemma \ref{LEM: Frst d/n<r-1/r} that $F(\lfloor\frac{r-1}{r}n\rfloor)>F(\lfloor(\frac{r-1}{r}-\varepsilon)n\rfloor)+\eta n^{(r-1)s+t-1}$;
%combining these together, one can easily deduce that $d\geq (\frac{r-1}{r}-\varepsilon)n$.
Recall the definition of $\lambda_{s,t}$, and let $\tilde\lambda_{s,t,r}=r-1$ if $t\neq s$ and $1$ otherwise.
One can easily obtain the following.
\begin{prop}\label{FACT: Asym of N(Tr(n),Kst)}
	$\N(T_r(n),K^{(r)}_{s,t})=(1+o(1))\frac{\tilde\lambda_{s,t,r+1}}{(s!)^{r-1}t!}(\frac{n}{r})^{(r-1)s+t}$.
\end{prop}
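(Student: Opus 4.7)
The plan is to count copies of $K^{(r)}_{s,t}$ in $T_r(n)$ directly and then apply a standard binomial asymptotic. Write $n = qr + p$ with $0 \le p < r$, so the parts $V_1,\ldots,V_r$ of $T_r(n)$ each have size $\lfloor n/r\rfloor$ or $\lceil n/r\rceil$, in particular $|V_i| = n/r + O(1)$ for every $i$.

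Since $T_r(n)$ is $r$-partite, every copy of $K^{(r)}_{s,t}$ it contains must place each of its $r$ color classes into a distinct $V_i$; conversely any such placement yields a copy. I would split into two cases. If $s \neq t$, then the color class of size $t$ is distinguished from the $r-1$ classes of size $s$: for each choice of an index $i \in [r]$ telling which part receives the $t$-class, the number of copies placed this way is $\binom{|V_i|}{t}\prod_{j\ne i}\binom{|V_j|}{s}$, and so
\begin{equation*}
\N(T_r(n),K^{(r)}_{s,t}) \;=\; \sum_{i=1}^{r}\binom{|V_i|}{t}\prod_{j\ne i}\binom{|V_j|}{s}.
\end{equation*}
If $s = t$, then all $r$ color classes are interchangeable, and the count collapses to the single term $\prod_{i=1}^{r}\binom{|V_i|}{s}$ (no summation over $i$, no overcounting).

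Finally I would apply $\binom{n/r + O(1)}{k} = (1+o(1))\frac{(n/r)^k}{k!}$ for each fixed $k \in \{s,t\}$. In the case $s \neq t$ this gives
\begin{equation*}
\N(T_r(n),K^{(r)}_{s,t}) = (1+o(1))\cdot r \cdot \frac{(n/r)^{t}}{t!}\cdot \frac{(n/r)^{(r-1)s}}{(s!)^{r-1}} = (1+o(1))\frac{r}{(s!)^{r-1}\,t!}\left(\frac{n}{r}\right)^{(r-1)s+t},
\end{equation*}
while in the case $s = t$ the prefactor is $1$ rather than $r$; in both cases the prefactor is exactly $\tilde\lambda_{s,t,r+1}$ by its definition ($r$ if $t\ne s$ and $1$ otherwise). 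This matches the claimed formula.

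There is no real obstacle here: the only thing to watch is not to double-count in the symmetric case $s=t$ (where the index $i$ is not a meaningful choice), and to carry through the $O(1)$ perturbations in $|V_i|$ inside the binomial coefficients; both are routine and absorbed into the $(1+o(1))$ factor.
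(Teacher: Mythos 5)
Your proposal is correct and is essentially the standard counting argument the paper has in mind (the paper states this proposition without proof, as ``one can easily obtain''), and your case split $s\neq t$ versus $s=t$ correctly reproduces the factor $\tilde\lambda_{s,t,r+1}$. The only step you assert rather than justify is that every (not necessarily induced) copy of $K^{(r)}_{s,t}$ in $T_r(n)$ has each of its color classes contained in a single part, with distinct classes in distinct parts; this is true but deserves a line --- vertices from distinct classes are adjacent and hence lie in distinct parts, and if some class met two parts $V_a,V_b$ then the remaining $r-1$ classes would have to occupy pairwise disjoint nonempty sets of parts inside the remaining $r-2$ parts, a contradiction --- after which your formula and the binomial asymptotics go through as written.
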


\begin{prop}\label{FACT: Asym of F_{r,s,t}}
$F_{r,s,t}\left(\left\lfloor\frac{r-1}{r}n\right\rfloor,n\right)=(1+o(1))\frac{\lambda_{s,t}(s\tilde{\lambda}_{s,t,r}+t)}{(s!)^{r-1}t!}\left(\frac{n}{r}\right)^{(r-1)s+t-1}.$
\end{prop}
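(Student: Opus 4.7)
The plan is to substitute $a=\lfloor\tfrac{r-1}{r}n\rfloor$ directly into the explicit formula \eqref{equ:F1} for $F_{r,s,t}(a,n)$, and then expand each factor to leading order in $n$. With this choice of $a$, we have $n-1-a=\lfloor n/r\rfloor+O(1)$ and $a/(r-1)=n/r+O(1)$, so each binomial coefficient and each clique count appearing in \eqref{equ:F1} can be replaced by its main term.

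More concretely, first I would use the standard estimate $\binom{N}{k}=\tfrac{N^k}{k!}+O(N^{k-1})$ to write
\[
\binom{n-1-a}{s-1}=(1+o(1))\frac{(n/r)^{s-1}}{(s-1)!},\qquad \binom{n-1-a}{t-1}=(1+o(1))\frac{(n/r)^{t-1}}{(t-1)!}.
\]
Next I would apply Proposition \ref{FACT: Asym of N(Tr(n),Kst)} twice, once with parameters $(r-1,s,t)$ and once with parameters $(r-1,s,s)$; in the second application $\tilde\lambda_{s,s,r}=1$ by definition. This gives
\[
\N(T_{r-1}(a),K^{(r-1)}_{s,t})=(1+o(1))\frac{\tilde\lambda_{s,t,r}}{(s!)^{r-2}t!}\left(\frac{n}{r}\right)^{(r-2)s+t},
\]
\[
\N(T_{r-1}(a),K^{(r-1)}_{s,s})=(1+o(1))\frac{1}{(s!)^{r-1}}\left(\frac{n}{r}\right)^{(r-1)s}.
\]

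Plugging these four estimates into \eqref{equ:F1}, both summands inside the bracket have the same order $(n/r)^{(r-1)s+t-1}$, so I can factor it out. Using the identities $\tfrac{1}{(s-1)!(s!)^{r-2}}=\tfrac{s}{(s!)^{r-1}}$ and $\tfrac{1}{(t-1)!}=\tfrac{t}{t!}$, the bracket becomes
\[
\frac{s\,\tilde\lambda_{s,t,r}}{(s!)^{r-1}t!}+\frac{t}{(s!)^{r-1}t!}=\frac{s\tilde\lambda_{s,t,r}+t}{(s!)^{r-1}t!},
\]
which after multiplying by $\lambda_{s,t}$ yields exactly the stated asymptotic.

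There is no real obstacle here: the proof is pure bookkeeping once one is careful about two points. First, one must correctly interpret the subscripts in $\tilde\lambda$: substituting $r\mapsto r-1$ in Proposition \ref{FACT: Asym of N(Tr(n),Kst)} produces $\tilde\lambda_{s,t,r}$, and in the $K^{(r-1)}_{s,s}$ case this collapses to $1$. Second, the two terms of \eqref{equ:F1} have different degrees in $n-1-a$ and in $a$ individually, and only after multiplying them out does one see they both contribute to the same leading order $(r-1)s+t-1$; writing out the exponent as $(s-1)+((r-2)s+t)=(t-1)+(r-1)s=(r-1)s+t-1$ makes this transparent.
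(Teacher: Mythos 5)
Your computation is correct and is exactly the intended argument: the paper offers no written proof of this proposition (it is presented as an easy consequence of \eqref{equ:F1} and Proposition \ref{FACT: Asym of N(Tr(n),Kst)}), and your direct expansion, including the exponent bookkeeping and the identification $\tilde\lambda_{s,s,r}=1$, matches it. The only micro-caveat is the case $r=2$, where applying Proposition \ref{FACT: Asym of N(Tr(n),Kst)} with ``$r-1=1$'' relies on the paper's convention that $T_1(a)$ and $K^{(1)}_{s,t}$ are edgeless graphs, so that $\N(T_1(a),K^{(1)}_{s,t})=\binom{a}{t}$; this is consistent with the formula and does not affect correctness.
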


From now on we will often write $F(a)$ instead of $F_{r,s,t}(a,n)$ for short.

\begin{prop}\label{LEM: d>Omega(n)}
There exist $\eta_0>0$ and $\gamma>0$ such that the following holds. For any $\eta\in [0,\eta_0)$, if $F(\lfloor\frac{r-1}{r}n\rfloor)\le F(d)+\eta n^{(r-1)s+t-1}$, then $d\ge\gamma n$.
\end{prop}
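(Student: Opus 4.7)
The plan is to show this by a simple asymptotic comparison: the right-hand side value $F(\lfloor\frac{r-1}{r}n\rfloor)$ is of order exactly $n^{(r-1)s+t-1}$ with a positive leading constant, whereas $F(d)$ is of strictly smaller order when $d=o(n)$. So any $d$ failing to be linear in $n$ will make $F(d)$ negligibly small, contradicting the near-optimality hypothesis once $\eta$ is taken small enough.

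First I would set $c := \frac{\lambda_{s,t}(s\tilde\lambda_{s,t,r}+t)}{(s!)^{r-1}t!\,r^{(r-1)s+t-1}} > 0$, which by Proposition \ref{FACT: Asym of F_{r,s,t}} is the leading coefficient of $F(\lfloor\frac{r-1}{r}n\rfloor)$ in $n^{(r-1)s+t-1}$. Then, for any $d \le n-1$, from the explicit formula \eqref{equ:F1} and the trivial upper bounds $\binom{n-1-d}{k}\le\binom{n}{k}\le n^k/k!$ together with $\N(T_{r-1}(d),K^{(r-1)}_{s,t})\le d^{(r-2)s+t}/[(s!)^{r-2}t!]$ and the analogous bound for $K^{(r-1)}_{s,s}$, I would derive a clean upper estimate of the form
\[
F(d) \;\le\; C_1\,n^{s-1} d^{(r-2)s+t} \;+\; C_2\,n^{t-1} d^{(r-1)s}
\]
for absolute constants $C_1,C_2$ depending only on $r,s,t$.

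Next, substituting $d = xn$ with $x \in (0,1]$, this becomes $F(xn) \le (C_1 x^{(r-2)s+t} + C_2 x^{(r-1)s})\,n^{(r-1)s+t-1}$. Since both exponents $(r-2)s+t$ and $(r-1)s$ are strictly positive (using $r\ge 2$, $s\ge 1$, $t\ge 1$), the function $g(x) := C_1 x^{(r-2)s+t} + C_2 x^{(r-1)s}$ tends to $0$ as $x\to 0^+$. I would therefore choose $\gamma>0$ small enough so that $g(\gamma) < c/2$, and then set $\eta_0 := c/4$. Note that $F$ is monotone when the relevant binomial and clique counts are viewed together with the crude bound just derived, so the estimate $F(d)\le g(d/n)\,n^{(r-1)s+t-1}$ holds uniformly for $0\le d\le \gamma n$.

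Finally, assume $F(\lfloor\frac{r-1}{r}n\rfloor) \le F(d)+\eta n^{(r-1)s+t-1}$ with $\eta<\eta_0$. For $n$ large enough, Proposition \ref{FACT: Asym of F_{r,s,t}} gives $F(\lfloor\frac{r-1}{r}n\rfloor) \ge (c - c/8)\,n^{(r-1)s+t-1}$, so we must have $F(d) \ge (c - c/8 - c/4)\,n^{(r-1)s+t-1} > (c/2)\,n^{(r-1)s+t-1}$. If $d < \gamma n$ the uniform upper bound gives $F(d) < (c/2)\,n^{(r-1)s+t-1}$, a contradiction. Hence $d \ge \gamma n$, as desired. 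The only slightly delicate point — and the closest thing to an obstacle — is verifying that the crude bound on $F(d)$ is genuinely monotone in $d$ on $[0,\gamma n]$, but this follows immediately because $\binom{n-1-d}{k}$ times $\N(T_{r-1}(d),K^{(r-1)}_{s,\cdot})$ is bounded above for all such $d$ by the value $g(\gamma)n^{(r-1)s+t-1}$ computed at the endpoint.
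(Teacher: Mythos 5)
Your proposal is correct and follows essentially the same approach as the paper: upper-bound $F(d)$ by a quantity of order $n^{s-1}d^{(r-2)s+t}+n^{t-1}d^{(r-1)s}$ using the explicit formula \eqref{equ:F1}, lower-bound $F(\lfloor\frac{r-1}{r}n\rfloor)$ via Proposition~\ref{FACT: Asym of F_{r,s,t}}, and conclude that $d$ must be linear in $n$. The "monotonicity" worry you raise at the end is a non-issue — what you actually use is monotonicity of your polynomial $g(x)=C_1x^{(r-2)s+t}+C_2x^{(r-1)s}$, which is evidently increasing since both exponents are positive; the paper sidesteps this entirely by simply solving the displayed inequality for $d$ directly.
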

\begin{proof}
By Proposition \ref{FACT: Asym of F_{r,s,t}}, there is some $c>0$ such that
$F\left(\left\lfloor\frac{r-1}{r}n\right\rfloor\right)>cn^{(r-1)s+t-1}$. Let $\eta_0:=\frac{c}2$.
Suppose $0\le\eta\le\eta_0$ and $F(\lfloor\frac{r-1}{r}n\rfloor)\le F(d)+\eta n^{(r-1)s+t-1}$.
Then by the definition of $F$, we have that
$$n^{s-1}d^{(r-2)s+t}+n^{t-1}d^{(r-1)s}\geq F(d)\ge F\left(\left\lfloor\frac{r-1}{r}n\right\rfloor\right)-\eta n^{(r-1)s+t-1}\ge\frac{c}2n^{(r-1)s+t-1}.$$
This yields some $\gamma=\gamma(r,s,t)>0$ such that $d\ge\gamma n$.
\end{proof}

The following two propositions assert some properties on $F(a)$. We leave the technical details of their proofs in the Appendix \ref{app:A}.

\begin{prop}\label{LEM: Frst(a+1)>Frst(a) when a<1-1/r-eps}
For any $\gamma,\varepsilon>0$ with $\gamma+\varepsilon<\frac{r-1}{r}$, the following hold.
	\begin{itemize}
		\item [(i)] If $t< s+\frac12+\sqrt{2s+\frac14}$, then $F(a+1)>F(a)$ for all integers $a\in[\gamma n ,\lfloor\frac{r-1}{r}n\rfloor]$.
		\item [(ii)] If $t=s+\frac12+\sqrt{2s+\frac14}$, then $F(a+1)>F(a)$ for all integers $a\in[\gamma n ,(\frac{r-1}{r}-\varepsilon)n]$.
	\end{itemize}
\end{prop}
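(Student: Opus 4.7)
The strategy is to pass to the continuous limit. Substituting the asymptotic in Proposition~\ref{FACT: Asym of N(Tr(n),Kst)} into formula \eqref{equ:F1} yields, uniformly for $a=cn$ with $c$ in any compact subinterval of $(0,1)$, the expansion
\[
F(a) \;=\; n^{(r-1)s+t-1}\phi(c) + O\bigl(n^{(r-1)s+t-2}\bigr),
\]
where
\[
\phi(c) \;=\; \lambda_{s,t}\bigl[\,A(1-c)^{s-1}c^{(r-2)s+t} + B(1-c)^{t-1}c^{(r-1)s}\,\bigr]
\]
and $A,B$ are explicit positive constants depending only on $r,s,t$. Since $F$ is, up to bounded periodic corrections from the floors/ceilings in the definition of $T_{r-1}$, a polynomial in $a$, taking discrete differences gives $F(a+1)-F(a)=n^{(r-1)s+t-2}\phi'(a/n)+O(n^{(r-1)s+t-3})$ uniformly in $c=a/n$. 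Thus it suffices to prove that $\phi'(c)>0$ on $[\gamma,(r-1)/r]$ in case~(i), and on $[\gamma,(r-1)/r-\varepsilon]$ in case~(ii); continuity of $\phi'$ on a compact interval then provides a uniform positive lower bound, whence $F(a+1)>F(a)$ for all sufficiently large $n$.

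Differentiating and factoring out positive quantities,
\[
\phi'(c)=\lambda_{s,t}(1-c)^{s-2}c^{(r-1)s-1}\,\psi(c),\qquad \psi(c):=A c^{t-s}(\beta-\kappa c) + B(1-c)^{t-s}(\delta-\kappa c),
\]
where $\beta=(r-2)s+t$, $\delta=(r-1)s$, and $\kappa=(r-1)s+t-1$. The prefactor is positive on $(0,1)$, so $\phi'$ has the sign of $\psi$. Two elementary observations organise the case split: $\beta/\kappa>(r-1)/r$ always, while $\delta/\kappa\geq(r-1)/r$ precisely when $t\leq s+1$. Consequently, for $t\leq s+1$ both $(\beta-\kappa c)$ and $(\delta-\kappa c)$ are nonnegative on $[0,(r-1)/r]$, giving $\psi>0$ directly; and for any $t$ the same holds on $[0,\delta/\kappa]$. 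The only nontrivial regime is therefore $c\in[\delta/\kappa,(r-1)/r]$ when $t\geq s+2$, where the second summand of $\psi$ is negative and the first must dominate.

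To handle this regime I would first evaluate $\psi$ at the endpoint $c=(r-1)/r$: using $A/B=s/\bigl(t(r-1)^{t-s-1}\bigr)$ (noting $\tilde\lambda_{s,t,r}=r-1$ when $t>s$), the inequality $\psi((r-1)/r)>0$ simplifies, after cancelling $(r-1)^{t-s-1}$, to $(t-s)^2<t+s(r-1)$, which is implied by the hypothesis $(t-s)^2<t+s$ in case~(i) and by the strict $\varepsilon$-gap in case~(ii). To upgrade this endpoint positivity to the whole sub-interval, set $g(c):=(c/(1-c))^{t-s}(\beta-\kappa c)/(\kappa c-\delta)$, so that $\psi>0$ is equivalent to $g>B/A$; since $g(c)\to+\infty$ as $c\to(\delta/\kappa)^+$ and $g((r-1)/r)>B/A$, it suffices to show $g$ is strictly decreasing on $[\delta/\kappa,(r-1)/r]$. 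A log-derivative computation reduces this monotonicity to the quadratic inequality $Q(c):=\kappa c(1-c)-(\beta-\kappa c)(\kappa c-\delta)>0$. The main technical obstacle is verifying $Q>0$ on the relevant sub-interval using only the threshold hypothesis; tellingly, at the boundary instance $(r,s,t)=(2,1,3)$ (where the threshold holds with equality) one checks $Q((r-1)/r)=0$, showing that the threshold condition is essentially sharp for this argument. I would complete the verification by direct algebra on the discriminant of $Q$, splitting into a few sub-cases on small values of $t-s$ to handle the degenerate instances near threshold equality.
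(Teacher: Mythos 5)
Your first step already contains a genuine gap. From the leading-order expansion $F(a)=n^{(r-1)s+t-1}\phi(a/n)+O(n^{(r-1)s+t-2})$ you cannot conclude anything about $F(a+1)-F(a)$, because the difference you want to estimate is itself of order $n^{(r-1)s+t-2}$, i.e.\ exactly the size of your error term. The parenthetical justification ("$F$ is, up to bounded periodic corrections from the floors/ceilings, a polynomial in $a$") is not correct: $\N(T_{r-1}(a),K^{(r-1)}_{s,t})$ is a quasi-polynomial whose polynomial pieces (indexed by $a \bmod (r-1)$) agree only in the leading coefficient, consecutive integers $a,a+1$ lie in different residue classes, and the discrepancy between the pieces is again of order $n^{(r-1)s+t-2}$ — not bounded. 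To make this step rigorous you need either a uniform second-order expansion of these quasi-polynomials or, as the paper does in Claim \ref{LEM: D=MH}, the exact recursion $N_i(a+1)-N_i(a)=F_{r-1,\cdot,\cdot}\bigl(\lfloor\tfrac{r-2}{r-1}(a+1)\rfloor,a+1\bigr)$ followed by the asymptotics of Propositions \ref{FACT: Asym of N(Tr(n),Kst)} and \ref{FACT: Asym of F_{r,s,t}}; that route only yields $F(a+1)-F(a)=\lambda_{s,t}M(a)\bigl[H(\tfrac{n-a}{a})+o(1)\bigr]$, which suffices, whereas your claimed error $O(n^{(r-1)s+t-3})$ is not available from anything you have written.

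The second gap is at what you yourself call the main technical obstacle. Your $\psi(c)$ is, up to a positive factor and the substitution $z=(1-c)/c$, the same quantity as the paper's $H(z)$; the paper proves its positivity by a convexity argument (Claim \ref{LEM: H is increasing}: $H''>0$, $H'(\tfrac1{r-1})\ge 0$, $H(\tfrac1{r-1})\ge 0$ with a precise equality analysis), while you reduce it to endpoint evaluation plus monotonicity of $g$, i.e.\ to the quadratic inequality $Q(c)=\kappa c(1-c)-(\beta-\kappa c)(\kappa c-\delta)>0$, and then leave that inequality unverified. Since $Q$ is an upward parabola, positivity at the two endpoints of $[\delta/\kappa,\tfrac{r-1}{r}]$ does not imply positivity inside, so this is the crux and not a routine detail (it does seem to hold: under the hypothesis one can show $t-s\le s$ or $(s,t)=(1,3)$, and then a discriminant check works, but none of this is in your write-up). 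Moreover, your assertion that $\psi(\tfrac{r-1}{r})>0$ "is implied by the strict $\varepsilon$-gap in case (ii)" is false for $r=2$: at $t=s+\tfrac12+\sqrt{2s+\tfrac14}$ with $r=2$ one has $\psi(\tfrac12)=0$ exactly (the analogue of $H(\tfrac1{r-1})=0$ in the paper), so in case (ii) the conclusion on $[\gamma n,(\tfrac{r-1}{r}-\varepsilon)n]$ must be extracted from strict interior monotonicity, which again rests on the unproved $Q>0$.
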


\begin{prop}\label{LEM: Frst d/n<r-1/r}
For any $\varepsilon\in(0,\frac{r-1}{r})$, there exists $\xi=\xi(\varepsilon,r,s,t)>0$ such that $F(\lfloor\frac {r-1}{r}n\rfloor)- F(\lfloor(\frac{r-1}{r}-\varepsilon) n\rfloor)>\xi n^{(r-1)s+t-1}$.
\end{prop}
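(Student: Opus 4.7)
The plan is to pass from the discrete function $F=F_{r,s,t}(\cdot,n)$ to a smooth profile $g$ on $[0,1)$ and then extract the required gap $g(\tfrac{r-1}{r})-g(\tfrac{r-1}{r}-\varepsilon)>0$ from Proposition \ref{LEM: Frst(a+1)>Frst(a) when a<1-1/r-eps} together with the identity theorem for real-analytic functions. Starting from \eqref{equ:F1} and applying Proposition \ref{FACT: Asym of N(Tr(n),Kst)} to $T_{r-1}(a)$, a routine binomial calculation yields the uniform estimate
$$F(a)=g(a/n)\,n^{(r-1)s+t-1}+O_{r,s,t}\!\left(n^{(r-1)s+t-2}\right)\quad\text{for all }a\in\big[\tfrac{\varepsilon}{2}n,\,\big(1-\tfrac{\varepsilon}{2}\big)n\big],$$
where
$$g(x):=\lambda_{s,t}\!\left[\frac{\tilde\lambda_{s,t,r}\,(1-x)^{s-1}x^{(r-2)s+t}}{(s-1)!(s!)^{r-2}\,t!\,(r-1)^{(r-2)s+t}}+\frac{(1-x)^{t-1}x^{(r-1)s}}{(t-1)!(s!)^{r-1}(r-1)^{(r-1)s}}\right].$$
Since the additive error is of smaller order in $n$, the task reduces to showing that $g(\tfrac{r-1}{r})-g(\tfrac{r-1}{r}-\varepsilon)$ is a strictly positive constant depending only on $r,s,t,\varepsilon$.

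Fix $\gamma\in(0,\tfrac{r-1}{r}-\varepsilon)$. In case (i) of Proposition \ref{LEM: Frst(a+1)>Frst(a) when a<1-1/r-eps} we have $F(a+1)>F(a)$ for every integer $a\in[\gamma n,\lfloor\tfrac{r-1}{r}n\rfloor]$, and in case (ii), applying that proposition with its $\varepsilon$-parameter set to $\varepsilon/2$ gives the same inequality on $[\gamma n,\lfloor(\tfrac{r-1}{r}-\tfrac{\varepsilon}{2})n\rfloor]$. In either situation $F$ is strictly increasing on $[\gamma n,\lfloor(\tfrac{r-1}{r}-\tfrac{\varepsilon}{2})n\rfloor]$. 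Dividing by $n^{(r-1)s+t-1}$ and letting $n\to\infty$ through the uniform asymptotic, $g$ is non-decreasing on $[\gamma,\tfrac{r-1}{r}-\tfrac{\varepsilon}{2}]$. But $g$ is a non-constant polynomial (for instance $g(0)=0<g(\tfrac{r-1}{r})$), so by the identity theorem $g$ cannot be constant on any sub-interval; hence $g$ is actually strictly increasing on $[\gamma,\tfrac{r-1}{r}-\tfrac{\varepsilon}{2}]$, and
$$2\xi:=g\!\left(\tfrac{r-1}{r}-\tfrac{\varepsilon}{2}\right)-g\!\left(\tfrac{r-1}{r}-\varepsilon\right)>0.$$

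Finally, I would chain this strict inequality with the weak one $g(\tfrac{r-1}{r})\ge g(\tfrac{r-1}{r}-\tfrac{\varepsilon}{2})$, which in case (i) is immediate from the extended monotonicity range of Proposition \ref{LEM: Frst(a+1)>Frst(a) when a<1-1/r-eps}(i), and in case (ii) follows after letting the $\varepsilon$-parameter in Proposition \ref{LEM: Frst(a+1)>Frst(a) when a<1-1/r-eps}(ii) tend to $0$; by continuity this forces $g$ to be non-decreasing on the whole of $[\gamma,\tfrac{r-1}{r}]$. Plugging both inequalities into the uniform asymptotic yields
$$F\!\left(\left\lfloor\tfrac{r-1}{r}n\right\rfloor\right)-F\!\left(\left\lfloor\big(\tfrac{r-1}{r}-\varepsilon\big)n\right\rfloor\right)\ge 2\xi\,n^{(r-1)s+t-1}-O(n^{(r-1)s+t-2})>\xi\,n^{(r-1)s+t-1}$$
for all sufficiently large $n$, as desired. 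The only substantive calculation is the binomial bookkeeping behind the uniform asymptotic; the main conceptual point --- and the step I expect to be the real obstacle to write down carefully --- is upgrading weak discrete monotonicity of $F$ into strict monotonicity of $g$ via analyticity, which sidesteps a direct analysis of $g'$ that would be delicate in the boundary case $t=s+\tfrac12+\sqrt{2s+\tfrac14}$, where in fact $g'(\tfrac{r-1}{r})=0$.
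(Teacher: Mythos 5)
Your proof is correct but routes differently from the paper's. The paper proves this proposition by decomposing the difference as $\lambda_{s,t}\sum_{a=\lfloor\beta n\rfloor}^{\lfloor\frac{r-1}{r}n\rfloor-1}\Delta(a)$, applying its Claim~\ref{LEM: D=MH} (which writes $\Delta(a)=M(a)[H(\tfrac{n-a}{a})+o(1)]$) to each summand, and passing to the Riemann integral $\int_{1/(r-1)}^{1/\beta-1}\frac{z^{s-2}H(z)}{(1+z)^{(r-1)s+t}}\,dz$, whose positivity is then read off from the positivity of $H$ on the open interval (its Claim~\ref{LEM: H is increasing}). Your approach instead works directly with the rescaled profile $g(a/n)\approx F(a)/n^{(r-1)s+t-1}$, uses the already-proved Proposition~\ref{LEM: Frst(a+1)>Frst(a) when a<1-1/r-eps} to transfer weak discrete monotonicity of $F$ to weak monotonicity of $g$, and then upgrades to strict monotonicity (hence a positive gap) by the fact that a polynomial cannot be constant on a nondegenerate interval without being constant everywhere, which $g(0)=0<g(\tfrac{r-1}{r})$ rules out. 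This avoids the Riemann-sum machinery and reuses the monotonicity proposition as a black box, which is a genuine simplification of the argument structure. Two small caveats, neither fatal: (1) you state the error as $O(n^{(r-1)s+t-2})$, but the cited asymptotics (Propositions~\ref{FACT: Asym of N(Tr(n),Kst)} and~\ref{FACT: Asym of F_{r,s,t}}) only give $(1+o(1))$ multiplicative precision, i.e.\ an additive error $o(n^{(r-1)s+t-1})$ uniform on $[\tfrac{\varepsilon}{2}n,(1-\tfrac{\varepsilon}{2})n]$ --- fortunately $o(n^{(r-1)s+t-1})$ is all your argument actually needs; (2) invoking ``the identity theorem'' for a polynomial is overkill (finitely many roots suffices), and the parenthetical remark that $g'(\tfrac{r-1}{r})=0$ in the boundary case holds only for $r=2$, but neither point affects the validity of the proof.
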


We have collected all propositions needed for the proof of Lemma \ref{LEM: DeltaFr>0}.

\begin{proof}[Proof of Lemma \ref{LEM: DeltaFr>0}.]
First we consider the case (i) that $s\leq t<s+\frac12+\sqrt{2s+\frac14}$.
Suppose that $F\left(\left\lfloor\frac{r-1}{r}n\right\rfloor\right)\le F(d)$ (and $n$ is assumed to be sufficiently large throughout this section).
By Proposition \ref{LEM: d>Omega(n)}, there exists some $\gamma>0$ such that $d\ge\gamma n$. We may assume $\gamma<\frac{r-1}{r}$, as otherwise we are done.
Then by Proposition \ref{LEM: Frst(a+1)>Frst(a) when a<1-1/r-eps} (i), $F\left(\left\lfloor\frac{r-1}{r}n\right\rfloor\right)$ is the unique maximum of $F(a)$ in $[\gamma n,\lfloor\frac{r-1}{r}n\rfloor]$.
This yields that $d\ge\lfloor\frac{r-1}{r}n\rfloor$.

Now we consider the case (ii) that $t=s+\frac12+\sqrt{2s+\frac14}$.
For any $\varepsilon>0$, let $\eta_0$ and $\xi$ be obtained from Propositions \ref{LEM: d>Omega(n)} and \ref{LEM: Frst d/n<r-1/r} respectively.
Let $\eta:=\min\{\eta_0,\xi\}>0$ and write $v=(r-1)s+t-1$. Now suppose that $F\left(\left\lfloor\frac{r-1}{r}n\right\rfloor\right)\le F(d)+\eta n^{v}$.
Our goal is to show $d\ge\frac{r-1}{r}n-\varepsilon n$.

Suppose to the contrary that $d<\frac{r-1}{r}n-\varepsilon n$. By Lemma \ref{LEM: d>Omega(n)}, there exists some $\gamma>0$ such that $d\ge\gamma n$.
So $\gamma n\le d<(\frac{r-1}{r}-\varepsilon)n$.
Putting Proposition \ref{LEM: Frst(a+1)>Frst(a) when a<1-1/r-eps} (ii) and Proposition \ref{LEM: Frst d/n<r-1/r} together,
we have $F(d)\le F\left(\left\lfloor(\frac{r-1}{r}-\varepsilon)n\right\rfloor\right)<F\left(\left\lfloor\frac{r-1}{r}n\right\rfloor\right)-\xi n^{v}\leq F\left(\left\lfloor\frac{r-1}{r}n\right\rfloor\right)-\eta n^{v}$,
which is a contradiction to the assumption. This proves Lemma \ref{LEM: DeltaFr>0}.
\end{proof}

\subsection{Proof of Lemma \ref{LEM: r-partite T_r(n) is best}}
%We begin with the outline of proof. In this section we always assume $s\le t\le s+\frac12+\sqrt{2s+\frac14}$ and $n$ is sufficiently large.
Let $G$ be an $n$-vertex $r$-partite graph with the maximum number of $K^{(r)}_{s,t}$-copies.
It is clear that $G$ must be a complete $r$-partite graph. So we may assume that $G=K_{a_1,\cdots,a_r}$ with $n=a_1+...+a_r$ and $a_r\geq\cdots\geq a_1\geq s$
(where $a_1\geq s$ is because $\N(G, \K)\geq 1$).

For any vector $\vec{x}=(x_1,...,x_r)$ with positive integers $x_i$'s, write $K_{\vec{x}}=K_{x_1,\cdots,x_r}$ and let
$$g(\vec{x})=\sum_{i=1}^r\binom{x_i}{t}\prod_{j\neq i}\binom{x_j}{s}~,~~*\vec{x}=(x_1+1,x_2,...,x_{r-1},x_r-1)~,\text{ and } \Delta g(\vec{x})=g(*\vec{x})-g(\vec{x})~.$$
Therefore, if $t\neq s$, then $g(\vec{x})=\N(K_{\vec{x}},K^{(r)}_{s,t})$; otherwise, $g(\vec{x})=r\N(K_{\vec{x}},K^{(r)}_{s,t})$.

We present a sequence of propositions as following.
\begin{prop}\label{prop:g(va)}
Let $\va=(a_1,...,a_r)$. Then we have $\Delta g(\va)\leq 0$.
\end{prop}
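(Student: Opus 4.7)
The plan is to derive $\Delta g(\va) \leq 0$ directly from the maximality assumption on $G$. First, I would observe that the vector $*\va = (a_1+1, a_2, \ldots, a_{r-1}, a_r-1)$ consists of nonnegative integers summing to $n$; in particular $a_r - 1 \geq 0$ since $a_r$ is the largest of the $a_i$'s and $a_1 \geq s \geq 1$ (and in fact $a_r - 1 \geq 1$ since $a_r \geq n/r$ for $n$ large). Hence $K_{*\va}$ is a legitimate $n$-vertex $r$-partite graph, and is a valid competitor in the maximization defining $G$.

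Next, I would record the relationship between $g$ and $\N(\cdot, \K)$. Unpacking the definition $g(\vec{x}) = \sum_{i=1}^r \binom{x_i}{t}\prod_{j \neq i}\binom{x_j}{s}$, the $i$-th summand counts those copies of $\K$ in $K_{\vec{x}}$ whose $t$-part is placed inside $V_i$. When $t \neq s$ this placement is unambiguous, so $g(\vec{x}) = \N(K_{\vec{x}}, \K)$; when $t = s$ each copy of $K^{(r)}_{s,s}$ is counted once for every choice of a distinguished part, giving $g(\vec{x}) = r \cdot \N(K_{\vec{x}}, \K)$. Either way $g$ is a fixed positive multiple of $\N(\cdot, \K)$.

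Finally, applying the maximality of $G = K_\va$ to the competitor $K_{*\va}$ gives $\N(K_\va, \K) \geq \N(K_{*\va}, \K)$, and multiplying by the common positive constant yields $g(\va) \geq g(*\va)$, i.e., $\Delta g(\va) \leq 0$. There is no genuine obstacle in this proof; its entire content lies in the bookkeeping that $g$ is proportional to $\N(\cdot, \K)$. The real substance will appear immediately afterwards, when the author expands $\Delta g(\va)$ algebraically and combines that expansion with this inequality to force structural restrictions on $\va$ (most naturally, $a_r - a_1 \leq 1$), en route to establishing Lemma \ref{LEM: r-partite T_r(n) is best}.
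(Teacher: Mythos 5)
Your proof is correct and follows the same one-line idea as the paper, which simply invokes the maximality of $\N(G,\K)$ over $n$-vertex $r$-partite graphs; you have merely spelled out the bookkeeping (that $K_{*\va}$ is a legitimate competitor and that $g$ is a fixed positive multiple of $\N(\cdot,\K)$) that the paper leaves implicit.
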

\begin{proof}
This clearly follows by the maximality of $\N(G,\K)$.
\end{proof}

%Then we will show $a_r-a_1=o(n)$ (Claim \ref{CLM: a_r-a_1=o(n)}). Obviously, this yields $a_i=\frac{n}r+o(n)$ for $i\in[r]$. For the case $t< s+\frac12+\sqrt{2s+\frac14}$, we will show that $a_r-a_1\le1$ (thus $G=T_r(n)$) in Claim \ref{CLM: a_r-a_1<2}.

\begin{prop}\label{CLM: a_1> gamma n}
There exists some $\gamma>0$ such that $a_1\ge\gamma n$.
\end{prop}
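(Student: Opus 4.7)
The plan is to compare $\N(G,\K)$ with $\N(T_r(n),\K)$ and use the hypothesis that $G$ is optimal. Since $T_r(n)$ is itself an $n$-vertex $r$-partite graph, the optimality of $G$ gives
\[
\N(G,\K)\ \ge\ \N(T_r(n),\K)\ =\ \Theta\bigl(n^{(r-1)s+t}\bigr)
\]
by Proposition \ref{FACT: Asym of N(Tr(n),Kst)}; more precisely, the right-hand side is asymptotically $c_{r,s,t}\cdot n^{(r-1)s+t}$ for an explicit positive constant $c_{r,s,t}$ depending only on $r,s,t$. So it suffices to show that if $a_1<\gamma n$ for $\gamma$ sufficiently small, then $\N(G,\K)$ is strictly less than this lower bound.

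The next step is to bound $g(\va)$ from above term by term, using $\binom{a_i}{t}\le a_i^t/t!$ and $\binom{a_j}{s}\le a_j^s/s!$ together with $a_i\le n$. For the term with $i=1$, the factor $\binom{a_1}{t}$ gives a contribution of at most $\gamma^t n^{(r-1)s+t}/(t!(s!)^{r-1})$. For each term with $i\neq 1$, the product $\prod_{j\neq i}\binom{a_j}{s}$ contains the factor $\binom{a_1}{s}\le (\gamma n)^s/s!$, yielding a contribution of at most $\gamma^s n^{(r-1)s+t}/(t!(s!)^{r-1})$. Summing and recalling $s\le t$ (so $\gamma^t\le\gamma^s$ for $\gamma<1$) gives
\[
g(\va)\ \le\ r\gamma^s\cdot\frac{n^{(r-1)s+t}}{t!(s!)^{r-1}}.
\]
Since $\N(G,\K)$ equals $g(\va)$ or $g(\va)/r$ (depending on whether $s\ne t$ or $s=t$), this upper bound for $\N(G,\K)$ is of the form $C_{r,s,t}\,\gamma^s\,n^{(r-1)s+t}$ for an explicit constant $C_{r,s,t}>0$.

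The final step is to choose $\gamma=\gamma(r,s,t)>0$ small enough that $C_{r,s,t}\,\gamma^s<c_{r,s,t}/2$, which contradicts the lower bound $\N(G,\K)\ge c_{r,s,t}(1+o(1))n^{(r-1)s+t}$ for sufficiently large $n$. There is no real obstacle here: the proposition essentially says that a highly unbalanced complete $r$-partite graph wastes the vertices in the smallest part, and this is captured quantitatively by the factor $\binom{a_1}{s}$ (or $\binom{a_1}{t}$) appearing in every summand of $g(\va)$.
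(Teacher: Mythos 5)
Your proposal is correct and follows essentially the same approach as the paper: both use the maximality of $G$ to get $\N(G,\K)\ge\N(T_r(n),\K)=\Theta(n^{(r-1)s+t})$, then bound $\N(G,\K)$ from above by observing that every summand in the $\K$-count carries a factor of $\binom{a_1}{s}$ or $\binom{a_1}{t}$ (the paper phrases this as $\N(G,\K)\le\N(K_{a_1,n,\ldots,n},\K)\le(r-1)a_1^s n^{(r-2)s+t}+a_1^t n^{(r-1)s}$), forcing $a_1=\Omega(n)$.
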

\begin{proof}
We have $\N(T_r(n),\K)\leq \N(G,\K)\leq \N(K_{a_1,n,...,n},\K)$.
Thus there exists some $c>0$ such that
$cn^{(r-1)s+t}\leq \N(G,\K)\leq (r-1)a_1^{s}n^{(r-2)s+t}+a_1^{t}n^{(r-1)s}.$
This implies that $a_1\ge\gamma n$ for some constant $\gamma>0$.
\end{proof}

For a vector $\vec{x}=(x_1,...,x_r)$, let $h(\vec{x})=x_rt!/s!\prod_{i=1}^r\binom{x_i}{s}$.

\begin{prop}\label{CLM: h(a)Dg(a)=}
Let $q=t-s$. The product $h(\va)\Delta g(\va)$ is equal to
\begin{equation*}
\frac{sa_r-t(a_1+1)}{a_1+1-s}(a_r-s)_{q}+\frac{ta_r-s(a_1+1)}{a_1+1-t}(a_1-s)_{q}+\frac{s(a_r-a_1-1)}{a_1+1-s}\sum_{i=2}^{r-1}(a_i-s)_{q}.
\end{equation*}		
\end{prop}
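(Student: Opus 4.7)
The statement is a pure algebraic identity about the swap $\va\mapsto*\va$, and the plan is a direct computation organized by the terms of $g$. Write $g(\vec x)=\sum_{i=1}^r T_i(\vec x)$ with $T_i(\vec x):=\binom{x_i}{t}\prod_{j\neq i}\binom{x_j}{s}$, and split
$$\Delta g(\va)=\bigl[T_1(*\va)-T_1(\va)\bigr]+\sum_{i=2}^{r-1}\bigl[T_i(*\va)-T_i(\va)\bigr]+\bigl[T_r(*\va)-T_r(\va)\bigr].$$
Only the first and last coordinates change under $\ast$, so in each of the three cases the ratio $T_i(*\va)/T_i(\va)$ is a product of at most two simple binomial ratios.

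Using the elementary identities $\binom{a+1}{\ell}/\binom{a}{\ell}=(a+1)/(a+1-\ell)$ and $\binom{a-1}{\ell}/\binom{a}{\ell}=(a-\ell)/a$, I would factor out $T_i(\va)$ and compute $T_i(*\va)/T_i(\va)-1$ on a common denominator. A routine expansion and cancellation of like terms yields
$$\frac{T_1(*\va)-T_1(\va)}{T_1(\va)}=\frac{ta_r-s(a_1+1)}{(a_1+1-t)\,a_r},\qquad \frac{T_r(*\va)-T_r(\va)}{T_r(\va)}=\frac{sa_r-t(a_1+1)}{(a_1+1-s)\,a_r},$$
together with $(T_i(*\va)-T_i(\va))/T_i(\va)=s(a_r-a_1-1)/((a_1+1-s)\,a_r)$ for each $2\leq i\leq r-1$.

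The role of $h(\va)$ is purely to clear denominators and convert binomial coefficients into falling factorials. From $\binom{a}{t}/\binom{a}{s}=(s!/t!)\,(a-s)_q$ with $q=t-s$, a short check gives $h(\va)\cdot T_i(\va)=a_r\,(a_i-s)_q$ uniformly in $i$. Multiplying the three displayed differences by $h(\va)$ therefore cancels the $a_r$ in each denominator, and summing the three contributions produces exactly the expression claimed in the proposition. The only step requiring care is the sign-tracking in the three numerators of $T_i(*\va)/T_i(\va)-1$; beyond that the computation is entirely mechanical and should not present a real obstacle.
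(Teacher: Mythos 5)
Your proof is correct, and it is essentially the same computation as the paper's: both split $\Delta g(\va)$ according to which index $i$ carries the $\binom{x_i}{t}$ factor, apply the elementary binomial ratio identities to the two coordinates changed by $*$, and then use $h(\va)$ together with $\binom{a_i}{t}/\binom{a_i}{s}=(s!/t!)(a_i-s)_q$ to convert to falling factorials. Your intermediate identity $h(\va)\,T_i(\va)=a_r(a_i-s)_q$ is a slightly cleaner way to package the step the paper performs by first dividing through by $\prod_{i=1}^r\binom{a_i}{s}$, but the two presentations are mathematically identical.
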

\begin{proof}
The proof is straightforward and we just give some computations here.
By routine calculations, we have $\Delta g(\va)=g(*\va)-g(\va)=A\cdot \prod_{i=2}^{r-1}\binom{a_i}{s}+ B\cdot \prod_{j\neq1,i,r}\binom{a_j}{s}$, where
$$A=\binom{a_1+1}{s}\binom{a_r-1}{t}+\binom{a_1+1}{t}\binom{a_r-1}{s}-\binom{a_1}{s}\binom{a_r}{t}-\binom{a_1}{t}\binom{a_r}{s},$$
$$B=\binom{a_1+1}{s}\binom{a_r-1}{s}-\binom{a_1}{s}\binom{a_r}{s}.$$
Using the formula $\binom{a_1+1}{x}\binom{a_r-1}{y}=\frac{a_1+1}{a_1+1-x}\cdot\frac{a_r-y}{a_r}\cdot \binom{a_1}{x}\binom{a_r}{y}$, one can derive that
$$\frac{\Delta g(\va)}{\prod_{i=1}^{r}\binom{a_i}{s}}=\frac{sa_r-t(a_1+1)}{(a_1+1-s)a_r}\frac{\binom{a_r}{t}}{\binom{a_r}{s}}+\frac{ta_r-s(a_1+1)}{(a_1+1-t)a_r}\frac{\binom{a_1}{t}}{\binom{a_1}{s}}
+\frac{s(a_r-a_1-1)}{(a_1+1-s)a_r}\sum_{i=2}^{r-1}\frac{\binom{a_i}{t}}{\binom{a_i}{s}}.$$
Now it follows easily by $h(\va)=\frac{a_rt!}{s!\prod_{i=1}^r\binom{a_i}{s}}$ and the formula $\binom{a_i}{t}=\frac{s!}{t!}\binom{a_i}{s}(a_i-s)_q$.
\end{proof}

For reals $x>0,\alpha\geq 0$ and an integer $k\geq 1$, let $(x)_k=\prod_{i=0}^{k-1}(x-i)$ and $H(x,\alpha)=H_1(x,\alpha)+H_2(x,\alpha)+H_3(x,\alpha)$, where
\begin{equation*}
\left\{
\begin{aligned}
	&H_1(x,\alpha)&=&~\left(s\alpha-q-\frac{qs+t}{x}\right)\left(1+\frac{1-q}{x}\right)\frac{(x+\alpha x)_q}{x^{q}},\\
	&H_2(x,\alpha)&=&~\left(t\alpha+q+\frac{qs-s}x\right)\left(1+\frac1x\right)\frac{(x)_q}{x^{q}},\\
	&H_3(x,\alpha)&=&~~(r-2)s\left(\alpha-\frac1x\right)\left(1+\frac{1-q}x\right)\frac{(x)_q}{x^{q}}.
\end{aligned}
\right.
\end{equation*}

\begin{prop}\label{CLM: H(x,a)<0}
	Let $\hat{x}=a_1-s$ and $\hat\alpha=\frac{a_r-a_1}{a_1-s}$. If $a_r\ge a_1+1$, then $H(\hat{x},\hat\alpha)\le0$.
\end{prop}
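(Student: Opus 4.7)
The plan is to interpret $H(\hat{x}, \hat\alpha)$ as essentially a rescaling of $h(\va)\Delta g(\va)$, which is already nonpositive by combining the optimality statement of Proposition \ref{prop:g(va)} with the formula of Proposition \ref{CLM: h(a)Dg(a)=}. More precisely, I would show that after clearing $\hat x^q$, $H$ evaluated at $(\hat x, \hat\alpha)$ is a positive multiple of the same three-term expression in Proposition \ref{CLM: h(a)Dg(a)=} with the sum $\sum_{i=2}^{r-1}(a_i - s)_q$ replaced by the smaller quantity $(r-2)(a_1 - s)_q$, and then bound this by $h(\va)\Delta g(\va)\le 0$.

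First I would carry out the substitution $\hat x = a_1 - s$, $\hat\alpha = (a_r - a_1)/(a_1 - s)$ in each $H_j$ and multiply through by $\hat x^q$. Routine simplifications, using $q=t-s$, yield $\bigl(s\hat\alpha - q - (qs+t)/\hat x\bigr)\hat x = sa_r - t(a_1+1)$, $\bigl(t\hat\alpha + q + (qs-s)/\hat x\bigr)\hat x = ta_r - s(a_1+1)$, $(\hat\alpha - 1/\hat x)\hat x = a_r - a_1 - 1$, $1 + (1-q)/\hat x = (a_1+1-t)/(a_1-s)$, $1 + 1/\hat x = (a_1+1-s)/(a_1-s)$, together with $(\hat x + \hat\alpha\hat x)_q = (a_r - s)_q$ and $(\hat x)_q = (a_1 - s)_q$. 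The key observation, which falls out by a short rearrangement, is that each of the three blocks then acquires the same prefactor
\[ C := \frac{(a_1+1-s)(a_1+1-t)}{(a_1-s)^2}, \]
so that $H(\hat x, \hat\alpha)\cdot \hat x^q = C\cdot (T_1 + T_2 + T_3^{\ast})$, where $T_1, T_2$ are the first two summands in the formula of Proposition \ref{CLM: h(a)Dg(a)=} and $T_3^{\ast} := (r-2)s(a_r - a_1 - 1)(a_1 - s)_q/(a_1+1-s)$ is what the third summand $T_3$ becomes when the sum $\sum_{i=2}^{r-1}(a_i - s)_q$ is replaced by $(r-2)(a_1 - s)_q$.

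Next I would verify $T_3^{\ast} \le T_3$. By Proposition \ref{CLM: a_1> gamma n} and largeness of $n$, $a_1 \ge \gamma n > t - 1$, so $a_i - s \ge q - 1$ for every $i$; this makes the falling factorial $(x)_q$ nonnegative and nondecreasing on $[a_1-s, \infty)$, and combined with $a_i \ge a_1$ gives $(a_i - s)_q \ge (a_1 - s)_q$. The hypothesis $a_r \ge a_1 + 1$ then forces the coefficient $s(a_r - a_1 - 1)/(a_1+1-s)$ in $T_3$ to be nonnegative, whence $T_3^{\ast} \le T_3$. Propositions \ref{prop:g(va)} and \ref{CLM: h(a)Dg(a)=} give $T_1 + T_2 + T_3 = h(\va)\Delta g(\va) \le 0$, so $C\cdot (T_1 + T_2 + T_3^{\ast}) \le 0$; since $C > 0$ and $\hat x^q > 0$ for large $n$, this forces $H(\hat x, \hat\alpha) \le 0$.

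The main obstacle is the purely mechanical first step: verifying that $H_1, H_2, H_3$ truly do share the common factor $C$ after pulling out $\hat x^q$. No conceptual subtlety is expected, and the degenerate case $q = 0$ (i.e., $s = t$) needs no separate argument because there $(x)_q \equiv 1$ makes $T_3^{\ast} = T_3$ hold with equality and the identity $H(\hat x, \hat\alpha)\hat x^q = C(T_1+T_2+T_3)$ remains valid.
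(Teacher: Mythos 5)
Your proposal is correct and is essentially the paper's own proof in different notation: the common prefactor you denote $C/\hat x^q$ is exactly $1/p(\hat x)$ for the paper's $p(x)=x^{q+2}/((x+1)(x+1-q))$, so your identity $H(\hat x,\hat\alpha)\hat x^q = C(T_1+T_2+T_3^\ast)$ is the same as the paper's $H_1 p(\hat x)=T_1$, $H_2 p(\hat x)=T_2$, $H_3 p(\hat x)=T_3^\ast$, and the rest of the argument (the bound $T_3^\ast\le T_3$ via $(a_i-s)_q\ge(a_1-s)_q$ and nonnegativity of the coefficient, then Propositions \ref{prop:g(va)} and \ref{CLM: h(a)Dg(a)=}) coincides with the paper's.
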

\begin{proof}	
Assume that $a_r\ge a_1+1$. Let $p(x)=\frac{x^{q+2}}{(x+1)(x+1-q)}$. We first show that
\begin{equation}
h(\va)\Delta g(\va)\ge H(\hat{x},\hat\alpha)\cdot p(\hat{x}).\label{equa: h(a)Dg(a)>Hp}
\end{equation}
One can rewrite the first two terms of $h(\va)\Delta g(\va)$ in Proposition \ref{CLM: h(a)Dg(a)=} as the following
\begin{align*}
&\frac{sa_r-t(a_1+1)}{a_1+1-s}(a_r-s)_{q}=\frac{(s\hat\alpha\hat{x}-q\hat{x}-qs-t)(\hat{x}+\hat\alpha\hat{x})_q}{\hat{x}+1}=H_1(\hat{x},\hat\alpha)\cdot p(\hat{x}),\\
&\frac{ta_r-s(a_1+1)}{a_1+1-t}(a_1-s)_{q}=\frac{(t\hat\alpha \hat{x}+q\hat{x}+qs-s)(\hat{x})_q}{\hat{x}+1-q}=H_2(\hat{x},\hat\alpha)\cdot p(\hat{x}).
\end{align*}
Thus to prove (\ref{equa: h(a)Dg(a)>Hp}), it suffices to show that the third term of $h(\va)\Delta g(\va)$ in Proposition \ref{CLM: h(a)Dg(a)=} is at least $H_3(\hat{x},\hat\alpha)p(\hat{x})$.
Indeed, since $\frac{s(a_r-a_1-1)}{a_1+1-s}\ge0$ and $\sum_{i=2}^{r-1}(a_i-s)_{q}\ge(r-2)(a_1-s)_q=(r-2)(\hat{x})_q$,
this follows by
$\frac{s(a_r-a_1-1)}{a_1+1-s}\sum_{i=2}^{r-1}(a_i-s)_{q}\ge\frac{(r-2)s(\hat\alpha \hat{x}-1)(\hat{x})_q}{\hat{x}+1}=H_3(\hat{x},\hat\alpha)\cdot p(\hat{x}).$

Next we use (\ref{equa: h(a)Dg(a)>Hp}) to show $H(\hat{x},\hat\alpha)\le0$.
Since $n$ is sufficiently large and $a_1\ge\gamma n$ (by Proposition \ref{CLM: a_1> gamma n}), it holds that $p(\hat{x})=p(a_1-s)>0$ and $h(\va)>0$;
also by Proposition \ref{prop:g(va)}, we have $\Delta g(\va)\le0$.
Therefore one can easily derive from \eqref{equa: h(a)Dg(a)>Hp} that $H(\hat{x},\hat\alpha)\le0$.
\end{proof}	

We also need the following properties on $H(x,\alpha)$, whose technical proofs can be found in Appendix \ref{app:C}.

\begin{prop}\label{LEM: Key tech lem to pv complete case}
$(i)$ For any fixed $C>\varepsilon>0$, there exists $x_0$ such that the following holds. If $x\ge x_0$ and $C\ge \alpha\ge\varepsilon $, then $H(x,\alpha)>0$.\\
$(ii)$ If $t< s+\frac12+\sqrt{2s+\frac14}$, then there exist $\varepsilon_0$ and $x_1$ such that the following holds. If $x\ge x_1$ and $\varepsilon_0\ge \alpha\ge\frac2x$, then $H(x,\alpha)>0$.
\end{prop}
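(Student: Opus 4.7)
The plan is to approximate $H(x,\alpha)$ by its $x\to\infty$ limit and handle the regime of part (i) (where $\alpha$ is bounded below) and that of part (ii) (where $\alpha$ may decay with $x$) separately. Since $\frac{(x+\alpha x)_q}{x^q}\to(1+\alpha)^q$ and $\frac{(x)_q}{x^q}\to 1$ uniformly on compact intervals, one has $H(x,\alpha)\to H_0(\alpha)$ uniformly on compacts, where (with $q:=t-s$)
$$H_0(\alpha) = (s\alpha - q)(1+\alpha)^q + \bigl(t+(r-2)s\bigr)\alpha + q.$$
The key algebraic step is the factorization $H_0(\alpha)=\alpha\cdot P(1+\alpha)$ with
$$P(u) := su^q - q\sum_{i=0}^{q-1} u^i + (r-1)s + q,$$
which one verifies by polynomial division (the constant terms at $u=1$ cancel). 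A direct computation gives $P(1) = rs + q - q^2 \geq 0$ under the global hypothesis $q^2-q\leq 2s$, and the crude bound $\sum_{i=1}^{q-1} i u^{i-1}\leq \tfrac{q(q-1)}{2}u^{q-2}$ yields $P'(u)\geq qu^{q-2}\bigl(su-\tfrac{q(q-1)}{2}\bigr)>0$ for $u>1$. Hence $P(u)>P(1)\geq 0$ and so $H_0(\alpha)>0$ for every $\alpha>0$.

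Part (i) is then immediate by compactness: $H_0$ is continuous and strictly positive on $[\varepsilon,C]$, so its minimum $\mu$ is positive, and the uniform convergence furnishes some $x_0$ with $H(x,\alpha)\geq \mu/2$ for every $x\geq x_0$ and $\alpha\in[\varepsilon,C]$.

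Part (ii) requires a finer local analysis near $\alpha=0$, since $\alpha=\Theta(1/x)$ is allowed. Under the strict inequality $t<s+\tfrac12+\sqrt{2s+\tfrac14}$, the constant $c:=rs+q-q^2$ is strictly positive. Expanding each $H_i(x,0)$ to order $1/x$, and using the cancellation $-q+q=0$ of the constant terms, one obtains the matched pair
$$H(x,0) = -\tfrac{c}{x} + O\bigl(\tfrac{1}{x^2}\bigr), \qquad \partial_\alpha H(x,0) = c + O\bigl(\tfrac{1}{x}\bigr),$$
together with a uniform bound $|\partial_\alpha^2 H(x,\alpha)|\leq C_3$ for $\alpha\in[0,\varepsilon_0]$. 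A second-order Taylor expansion in $\alpha$ then yields
$$H(x,\alpha)\geq -\tfrac{c}{x} + c\alpha - \tfrac{C_1}{x^2} - \tfrac{C_2\alpha}{x} - C_3\alpha^2,$$
and the elementary inequality $c\alpha-\tfrac{c}{x}\geq \tfrac{c\alpha}{2}$, valid precisely when $\alpha\geq \tfrac{2}{x}$, turns this into $H(x,\alpha)\geq \tfrac{c\alpha}{2}-o(\alpha)$ provided $\varepsilon_0$ is small (to absorb $C_3\alpha^2$) and $x_1$ is large (to absorb the $1/x^2$ and $\alpha/x$ corrections).

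The main obstacle is the finite-$x$ bookkeeping in part (ii): one must verify that the coefficient of $1/x$ in $H(x,0)$ and the constant term of $\partial_\alpha H(x,0)$ are exactly negatives of each other, both equal to $\pm c=\pm(rs+q-q^2)$. This precise matching is what makes $\alpha\geq 2/x$ the correct threshold, and it is the finite-$x$ shadow of the sharp condition $t<s+\tfrac12+\sqrt{2s+\tfrac14}$ that underlies the whole section.
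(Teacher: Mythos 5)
Your proposal follows essentially the same two-step strategy as the paper: first expand $H(x,\alpha)$ to order $1/x$, identifying the $x\to\infty$ limit (your $H_0(\alpha)$ is exactly the paper's function $f(\alpha)$ from Claim B.1), prove positivity of that limit, and then, for part (ii), perform a finer $1/x$-analysis near $\alpha=0$ where the coefficient of $1/x$ at $\alpha=0$ equals $-(rs+q-q^2)$. The only genuine stylistic divergence is in the positivity step: you factor $H_0(\alpha)=\alpha\cdot P(1+\alpha)$ and argue $P(1)=rs+q-q^2\geq 0$ with $P'(u)>0$ for $u>1$, whereas the paper's Claim B.2 differentiates $f$ twice, shows $f'(0)=rs+q-q^2\geq 0$ and $f''>0$, and concludes monotonicity of $f$. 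These are parallel computations; your factorization is a clean algebraic alternative that makes the structure of the condition $q^2-q\leq 2s$ a bit more transparent. The one thing your proposal leaves as an assertion---that the $1/x$-coefficient of $H(x,0)$ is exactly $-(rs+q-q^2)$, the negative of $\partial_\alpha H_0(0)$---is indeed correct (it is the content of the paper's Claim B.1 evaluated at $\alpha=0$), and you correctly flag it as the crux; the Taylor-remainder bookkeeping you wrap around it is sound and recovers the same $\alpha\geq 2/x$ threshold the paper reaches via its ad hoc inequalities $f(\alpha)\geq\tfrac45 f'(0)\alpha$ and $|P(\alpha)|\leq f'(0)/4$.
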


Now we can finish the proof of Lemma \ref{LEM: r-partite T_r(n) is best}.

\begin{proof}[Proof of Lemma \ref{LEM: r-partite T_r(n) is best}.]
By Proposition \ref{CLM: a_1> gamma n}, there exists some $\gamma>0$ such that $a_1\ge\gamma n$.
Let $n$ be sufficiently large, $\hat x=a_1-s$, and $\hat\alpha=\frac{a_r-a_1}{a_1-s}$.

First we prove that $a_r-a_1=o(n)$, which would imply that $a_i=n/r+o(n)$.
Suppose to the contrary that $a_r-a_1\ge\varepsilon n$ for some $\varepsilon>0$.
As $n$ is sufficiently large, it follows that $2/\gamma\ge\hat\alpha\ge\varepsilon$.
Let $x_0$ be obtained from Proposition \ref{LEM: Key tech lem to pv complete case} (i) by applying with $C=2/\gamma$ and $\varepsilon$.
Since $\hat x=a_1-s\ge\gamma n-s \ge x_0$, by Proposition \ref{LEM: Key tech lem to pv complete case} (i) we get $H(\hat x,\hat\alpha)>0$,
which contradicts Proposition \ref{CLM: H(x,a)<0}.

Next we assume $t< s+\frac12+\sqrt{2s+\frac14}$ and aim to show that $G=T_r(n)$, or equivalently $a_r-a_1\leq 1$.
Assume that $a_r-a_1\geq 2$. Let $\varepsilon_0$ and $x_1$ be obtained from Proposition \ref{LEM: Key tech lem to pv complete case} (ii).
As we just prove $a_r-a_1=o(n)$, for sufficiently large $n$ we have $a_r-a_1\le \frac{\gamma\varepsilon_0}{2}n$.
This implies that $\varepsilon_0\ge\hat\alpha\ge\frac2{\hat x}$.
Also we have $\hat x\geq\gamma n-s\ge x_1$,
so by Proposition \ref{LEM: Key tech lem to pv complete case} (ii), we obtain $H(\hat x,\hat\alpha)>0$,
again a contradiction to Proposition \ref{CLM: H(x,a)<0}.
Now the proof of Lemma \ref{LEM: r-partite T_r(n) is best} is completed.
\end{proof}

\section{Concluding remarks}
In this paper we consider the generalized Tur\'an numbers $\ex(n,T,H)$ for graphs $T, H$ with $\chi(T)<\chi(H)$.
In the case that $T$ is a clique, Theorem \ref{THM: Main Theorem} gives a sharp estimate.
A natural question will be to consider for non-clique $T$.
Theorem \ref{THM: MAIN ex(n,K_{s:(r-1);t})} provides some answers for complete multipartite graphs $T$.
However, even for this case there lacks of evidences to speculate extremal graphs in general.
A special problem which we encounter with is that if, for $(T,H)=(K_{s,t},K_3)$ and $t\geq s+\frac12+\sqrt{2s+\frac14}$,
the extremal graphs are always bipartite.
If this is the case then one may expect to solve the problem similar as in Lemma \ref{LEM: r-partite T_r(n) is best}.
It also seems plausible to ask the extremal graphs for $\ex(n,T,K_r)$ for edge-critical graphs $T$ (in particular, for $\ex(n,C_{2k+1},K_r)$ where $r\geq 4$).
Our attempt to generalize Theorem \ref{THM: MAIN ex(n,K_{s:(r-1);t})} is limited by our capability of computation,
therefore it will be interesting to see if there exists some novel approach which can work for general problems.
%For more problems on the generalized Tur\'an numbers $\ex(n,T,H)$,
%we refer interested readers to \cite{Alon-2015} or any reference mentioned in the introduction.

\appendix

\section{Proofs of Propositions \ref{LEM: Frst(a+1)>Frst(a) when a<1-1/r-eps} and \ref{LEM: Frst d/n<r-1/r}}\label{app:A}
We begin by defining some functions: (let $q=t-s$ and $1/C=(s!)^{r-1}t!(r-1)^{(r-2)s+t-1}$)
\begin{equation*}
\Delta(a)=(F(a+1)-F(a))/\lambda_{s,t}~, ~~  M(a)=C{a^{(r-2)s+t}(n-a)^{s-2}}~,   \text{ and }
\end{equation*}	
\begin{equation*}
H(z)=s\lambda_{s,t}(s\tilde{\lambda}_{s,t,r-1}+t)z-(s^2-s)\tilde\lambda_{s,t,r}/(r-1)+st(r-1)^qz^{q+1}
-(t^2-t)(r-1)^{q-1}z^{q}.\label{equa: def of H(z)}
\end{equation*}

First we will need to prove the following two claims.
\begin{claim}\label{LEM: D=MH}
For $\gamma n\le a\le\left(1-\varepsilon\right)n$, it holds for sufficiently large $n$ that $\Delta(a)=M(a)\cdot \left[H\left(\frac{n-a}{a}\right)+o(1)\right],$ where $o(1)$ tends to $0$ as $n$ goes to infinity.
\end{claim}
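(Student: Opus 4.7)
\medskip

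\noindent\textbf{Proof proposal for Claim \ref{LEM: D=MH}.}
The plan is to expand $F(a)=F_{r,s,t}(a,n)$ via formula \eqref{equ:F1}, perform an asymptotic analysis of the forward difference $F(a+1)-F(a)$ by the discrete product rule, and then match the result to $M(a)\cdot H(z)$ with $z=(n-a)/a$. Throughout the relevant range $a\in[\gamma n,(1-\varepsilon)n]$ both $a$ and $n-a$ are $\Theta(n)$, which will make all the asymptotic estimates uniform.

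First I would record the asymptotic values of the two Tur\'an-clique counts appearing in \eqref{equ:F1}. By Proposition \ref{FACT: Asym of N(Tr(n),Kst)} applied to $T_{r-1}(a)$ one gets
\begin{equation*}
\phi(a):=\N(T_{r-1}(a),K^{(r-1)}_{s,t})=(1+o(1))\frac{\tilde\lambda_{s,t,r}}{(s!)^{r-2}t!}\left(\tfrac{a}{r-1}\right)^{(r-2)s+t},
\end{equation*}
and similarly $\psi(a):=\N(T_{r-1}(a),K^{(r-1)}_{s,s})=(1+o(1))\frac{1}{(s!)^{r-1}}(a/(r-1))^{(r-1)s}$. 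For the discrete differences I would use the multipartite analogue of Proposition \ref{prop:turan2} (a vertex from the largest class of $T_{r-1}(a+1)$ lies in $(1+o(1))\frac{(r-2)s+t}{a}\phi(a)$ copies of $K^{(r-1)}_{s,t}$), giving $\phi(a+1)-\phi(a)=(1+o(1))\cdot\frac{(r-2)s+t}{a}\,\phi(a)$ and analogously for $\psi$.

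Next, write $F(a)/\lambda_{s,t}=A(a)+B(a)$ where $A(a)=\binom{n-1-a}{s-1}\phi(a)$ and $B(a)=\binom{n-1-a}{t-1}\psi(a)$, and apply the standard product-rule identity
\begin{equation*}
A(a+1)-A(a)=-\binom{n-2-a}{s-2}\phi(a+1)+\binom{n-1-a}{s-1}[\phi(a+1)-\phi(a)],
\end{equation*}
plus its $B$-analogue. Substituting the asymptotics from the previous step, factoring the common $M(a)=Ca^{(r-2)s+t}(n-a)^{s-2}$ out of every term, and expressing $(n-a)/a=z$, a short computation shows
\begin{equation*}
A(a+1)-A(a)=M(a)\left[\tfrac{\tilde\lambda_{s,t,r}\,s\,((r-2)s+t)}{r-1}z-\tfrac{\tilde\lambda_{s,t,r}\,s(s-1)}{r-1}\right]+o(M(a)),
\end{equation*}
which, after checking the two cases $t=s$ and $t>s$ separately (using $\tilde\lambda$ and $\lambda$ definitions), matches the linear-and-constant part of $\lambda_{s,t}H(z)$. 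The $B$-difference similarly produces
\begin{equation*}
B(a+1)-B(a)=M(a)\left[st(r-1)^{q}z^{q+1}-(t^2-t)(r-1)^{q-1}z^{q}\right]+o(M(a)),
\end{equation*}
which gives exactly the remaining $z^{q}$ and $z^{q+1}$ terms of $\lambda_{s,t}H(z)$. Summing and dividing by $\lambda_{s,t}$ yields the claim.

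The main obstacle will be bookkeeping: one has to verify (a) that the error in each asymptotic approximation is $o(M(a))$ uniformly in $a\in[\gamma n,(1-\varepsilon)n]$, and (b) that the combinatorial constants produced by the factoring indeed collapse to the constants appearing in the definition of $H(z)$. Point (b) is the genuinely subtle piece since the coefficients involve the pieces $\lambda_{s,t}$ and $\tilde\lambda_{s,t,r-1},\tilde\lambda_{s,t,r}$, which behave differently in the $t=s$ and $t>s$ cases; the identity $\tilde\lambda_{s,t,r}\,s\,((r-2)s+t)/(r-1)=s\lambda_{s,t}(s\tilde\lambda_{s,t,r-1}+t)$ (verified in both cases) is exactly the cancellation that is needed. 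Point (a) is routine since $a,n-a=\Theta(n)$ on the interval, so every $o(1)$ in the clique-count asymptotics translates to $o(M(a))$.
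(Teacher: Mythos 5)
Your proposal is correct and follows essentially the same route as the paper: expand $F(a)$ via \eqref{equ:F1}, take the forward difference using the recursion $N_i(a+1)-N_i(a)=F_{r-1,\cdot,\cdot}(\cdot)$ from \eqref{equa:T_r(n)-T_r(n-1)=F_rst} together with the product rule for differences of products of binomials, then apply Propositions \ref{FACT: Asym of N(Tr(n),Kst)} and \ref{FACT: Asym of F_{r,s,t}} and match coefficients against $M(a)H(z)$. The only cosmetic difference is that you phrase the asymptotics of the $\phi,\psi$ differences as discrete differentiation, $\phi(a+1)-\phi(a)=(1+o(1))\tfrac{(r-2)s+t}{a}\phi(a)$, which is equivalent to the paper's direct use of Proposition \ref{FACT: Asym of F_{r,s,t}} precisely because of the coefficient identity $\tilde\lambda_{s,t,r}\tfrac{(r-2)s+t}{r-1}=\lambda_{s,t}(s\tilde\lambda_{s,t,r-1}+t)$ that you explicitly verify.
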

\begin{proof}
We need to compute $\Delta(a)$. Write $N_1(a)=\N(T_{r-1}(a),K^{(r-1)}_{s,t})$ and $N_2(a)=\N(T_{r-1}(a),K^{(r-1)}_{s,s})$.
By the definition of the function $F$, we have
\begin{eqnarray} \label{equa: Frst(a,n)}
\frac{F(a)}{\lambda_{s,t}}=\binom{n-a-1}{s-1}N_1(a)+\binom{n-a-1}{t-1}N_2(a),
\end{eqnarray}
By \eqref{equa:T_r(n)-T_r(n-1)=F_rst},
$N_1(a+1)=N_1(a)+\delta_1$ and $N_2(a+1)=N_2(a)+\delta_2$,
where $\delta_1=F_{r-1,s,t}(\lfloor\frac{r-2}{r-1}(a+1)\rfloor,a+1)$ and $\delta_2=F_{r-1,s,s}(\lfloor\frac{r-2}{r-1}(a+1)\rfloor,a+1).$
So we can obtain that
\begin{eqnarray}
\frac{F(a+1)}{\lambda_{s,t}}&=&\binom{n-a-2}{s-1}(N_1(a)+\delta_1) +\binom{n-a-2}{t-1}(N_2(a)+\delta_2).\label{equa: Frest(a+1,n)}
\end{eqnarray}
By \eqref{equa: Frst(a,n)} and \eqref{equa: Frest(a+1,n)}, it follows that
\begin{equation*}
\Delta(a)=\binom{n-a-2}{s-1}\delta_1-\binom{n-a-2}{s-2}N_1(a)+\binom{n-a-2}{t-1}\delta_2-\binom{n-a-2}{t-2}N_2(a).\label{equa: Delta(a,n)}
\end{equation*}
Applying Propositions \ref{FACT: Asym of N(Tr(n),Kst)} and \ref{FACT: Asym of F_{r,s,t}} to $N_1(a),N_2(a),\delta_1$ and $\delta_2$,
one can derive that
\begin{eqnarray*}
		\Delta(a)&=(n-a)^{s-1}\left[\frac{\lambda_{s,t}(s\tilde{\lambda}_{s,t,r-1}+t)}{(s-1)!(s!)^{r-2}t!}+o(1)\right]\left(\frac{a}{r-1}\right)^{(r-2)s+t-1}\\
		&-(n-a)^{s-2}\left[\frac{\tilde\lambda_{s,t,r}}{(s-2)!(s!)^{r-2}t!}+o(1)\right]\left(\frac{a}{r-1}\right)^{(r-2)s+t}\\
		&+(n-a)^{t-1}\left[\frac{s}{(t-1)!(s!)^{r-1}}+o(1)\right]\left(\frac{a}{r-1}\right)^{(r-1)s-1}\\
		&-(n-a)^{t-2}\left[\frac{1}{(t-2)!(s!)^{r-1}}+o(1)\right]\left(\frac{a}{r-1}\right)^{(r-1)s}.
\end{eqnarray*}
Let $z=\frac{n-a}{a}$. After some simplifications, one can obtain that
\begin{eqnarray*}
		\Delta(a)&=Ca^{(r-1)s+t-2}\cdot\big[s\lambda_{s,t}(s\tilde{\lambda}_{s,t,r-1}+t)z^{s-1}-\frac{s(s-1)\tilde\lambda_{s,t,r}}{r-1}z^{s-2}+st(r-1)^qz^{t-1}\\
		&-(t^2-t)(r-1)^{q-1}z^{t-2}+o(1)\big]=M(a)\cdot [H(z)+o(1)].
\end{eqnarray*}
This proves Claim \ref{LEM: D=MH}.
\end{proof}

\begin{claim} \label{LEM: H is increasing}
$H(z)$ is strictly increasing in $[\frac1{r-1},+\infty)$ and $H(\frac1{r-1})\ge0$. Moreover, $H(\frac1{r-1})=0$ if and only if $r=2$ and $t=s+\frac12+\sqrt{2s+\frac14}$.
%Furthermore, \begin{itemize}
%		\item [(i)] if $t<s+\frac12+\sqrt{2s+\frac14}$, then $H(z)\ge H(\frac{1}{r-1})>0$ for $z\ge\frac1{r-1}$;
%		\item [(ii)] if $t=s+\frac12+\sqrt{2s+\frac14}$, then for any $\varepsilon>0$, $H(z)\ge H(\frac{1}{r-1}+\varepsilon)>0$ for $z\ge\frac1{r-1}+\varepsilon$.
%	\end{itemize}	
\end{claim}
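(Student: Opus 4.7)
\medskip

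My plan is to prove the two assertions separately: first, to evaluate $H(1/(r-1))$ directly and determine when equality holds; second, to verify strict monotonicity on $[1/(r-1),+\infty)$ by passing to a rescaled variable.

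For the value, I will substitute $z=1/(r-1)$ into the definition of $H$. The identities $(r-1)^{q}z^{q+1}=z$ and $(r-1)^{q-1}z^{q}=z$ render three of the four terms proportional to $1/(r-1)$, and it is natural to split into $s=t$ and $s<t$. When $s=t$ (so $q=0$, $\lambda_{s,t}=\tfrac12$, and $\tilde\lambda_{s,t,r-1}=\tilde\lambda_{s,t,r}=1$), a short simplification gives $H(1/(r-1)) = 2s/(r-1) > 0$. When $s<t$ (so $q\ge 1$, $\lambda_{s,t}=1$, $\tilde\lambda_{s,t,r-1}=r-2$, $\tilde\lambda_{s,t,r}=r-1$), collecting and cancelling terms yields
\begin{equation*}
H\!\left(\frac{1}{r-1}\right) \;=\; \frac{(r-1)s + t - (t-s)^{2}}{r-1}.
\end{equation*}
The hypothesis $t\le s+\tfrac12+\sqrt{2s+\tfrac14}$ is equivalent to $(t-s)^{2}\le s+t$, so $(r-1)s+t-(t-s)^{2}\ge (r-2)s\ge 0$; equality forces both $r=2$ and $(t-s)^{2}=s+t$, i.e., the boundary value of $t$.

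For the monotonicity, I will set $w=(r-1)z$ and consider
\begin{equation*}
\tilde H(w) := (r-1)H(z) \;=\; Aw - s(s-1)\tilde\lambda_{s,t,r} + tw^{q}\bigl(sw-(t-1)\bigr),
\end{equation*}
where $A := s\lambda_{s,t}(s\tilde\lambda_{s,t,r-1}+t)$; strict monotonicity of $\tilde H$ on $[1,+\infty)$ is equivalent to what we want. Differentiating,
\begin{equation*}
\tilde H'(w) = A + tw^{q-1}\bigl[s(q+1)w-q(t-1)\bigr],\qquad \tilde H''(w) = qtw^{q-2}\bigl[s(q+1)w-(q-1)(t-1)\bigr].
\end{equation*}
The case $q=0$ is immediate since $\tilde H'$ is then the positive constant $2s^{2}$. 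For $q\ge 1$, the unique positive root of $\tilde H''$ is $w_{0} = (q-1)(t-1)/[s(q+1)]$, and I will observe that the inequality $w_{0}<1$ is algebraically equivalent to $(q-1)^{2}<2s$. This follows from the hypothesis, which gives $q(q-1)\le 2s$, since then $(q-1)^{2}=q(q-1)-(q-1)\le 2s-(q-1)<2s$ for $s\ge 1$, $q\ge 1$. Thus $\tilde H''(w)>0$ on $[1,+\infty)$, so $\tilde H'$ is strictly increasing there. A direct computation plus the same bound $q(q-1)\le 2s$ yields $\tilde H'(1) = s^{2}(r-2) + 2st - tq(q-1) \ge s^{2}(r-2) \ge 0$, whence $\tilde H'(w)>0$ for $w>1$, and therefore $\tilde H$ (hence $H$) is strictly increasing on the required interval.

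The main obstacle will be establishing the strict inequality $w_{0}<1$: this is the only place where the full strength of the quadratic hypothesis $(t-s)^{2}\le s+t$ is used, and it must be carefully converted into the slightly different bound on $(q-1)^{2}$. The delicacy is most visible in the boundary case $r=2$, $t=s+\tfrac12+\sqrt{2s+\tfrac14}$, where both $\tilde H(1)=0$ and $\tilde H'(1)=0$ simultaneously; strict monotonicity at $w=1$ then survives precisely because $\tilde H''(1)>0$, i.e., because $w_{0}<1$ is strict.
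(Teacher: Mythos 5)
Your proof is correct and takes essentially the same approach as the paper: compute $H$ at the left endpoint, differentiate twice, show that the second derivative is positive on the interval (via the same linear factor $s(q+1)w-(q-1)(t-1)$, which the paper calls $h$), deduce that $H'$ is increasing, and check $H'\ge 0$ at the left endpoint. The substitution $w=(r-1)z$ is a cosmetic simplification that removes the $(r-1)$ factors but does not change the substance. One small imprecision: for $q=1$, $\tilde H''$ is the positive constant $2st$ with no root (your formula gives $w_0=0$, not a positive root), and in the chain $(q-1)^2\le 2s-(q-1)<2s$ the last inequality is not strict when $q=1$; both are harmless since $(q-1)^2=0<2s$ directly in that case, but the statement ``the unique positive root of $\tilde H''$'' should be restricted to $q\ge 2$.
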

\begin{proof}	
If $t=s$, then $H(z)=s\cdot\frac12(s+s)z-\frac{s^2-s}{r-1}+s^2z-\frac{s^2-s}{r-1}={2s^2}\left(z-\frac{1}{r-1}+\frac{1}{s(r-1)}\right).$
It is obvious that $H(\frac1{r-1})>0$ and $H(z)$ is strictly increasing.
	
Next we consider $s<t\le s+\frac12+\sqrt{2s+\frac14}$. Then $q\geq 1$ and $2s+q-q^2\geq 0$, where $2s+q-q^2=0$ if and only if $t= s+\frac12+\sqrt{2s+\frac14}$.
In this case we have
\begin{equation}\label{equa: H(z)}
H(z)=(s^2(r-2)+st)z-(s^2-s)+st(r-1)^qz^{q+1}-(t^2-t)(r-1)^{q-1}z^q.
\end{equation}
This implies that $H\left(\frac{1}{r-1}\right)=\frac{sr+q-q^2}{r-1}\geq 0,$
where the equality holds if and only if $r=2$ and $t= s+\frac12+\sqrt{2s+\frac14}$.
It remains to show $H(z)$ is strictly increasing in $[\frac1{r-1},+\infty)$.
To do so, it suffices to prove $H'(z)$ is strictly increasing in $[\frac1{r-1},+\infty)$ and $H'(\frac1{r-1})\geq 0$.

By (\ref{equa: H(z)}), one can obtain
\begin{eqnarray}
	&&H'(z)=s^2(r-2)+st+st(r-1)^q(q+1)z^q-(t^2-t)(r-1)^{q-1}qz^{q-1},\label{equa: H'}\\
	&&H''(z)=tq(r-1)^{q-1}z^{q-2}[s(r-1)(q+1)z-(t-1)(q-1)].\label{equa: H''}
\end{eqnarray}
So for $z>0$, $H''(z)\geq 0$ is equivalent to that $h(z):=s(r-1)(q+1)z-(t-1)(q-1)\geq 0$.
Since $h(z)$ is strictly increasing and $h\left(\frac1{r-1}\right)=(2s+q-q^2)+(q-1)\geq 0$,
we infer that $h(z)>0$ for $z>\frac1{r-1}$.
This also yields that $H''(z)>0$ for $z>\frac1{r-1}$. Therefore $H'(z)$ is strictly increasing in $[\frac1{r-1},+\infty)$.
Lastly, it follows from (\ref{equa: H'}) that
$H'\left(\frac{1}{r-1}\right)=(r-2)s^2+t(2s+q-q^2)\ge0.$
Now the proof of Claim \ref{LEM: H is increasing} is completed.	
\end{proof}		

We are ready to prove Propositions \ref{LEM: Frst(a+1)>Frst(a) when a<1-1/r-eps} and \ref{LEM: Frst d/n<r-1/r}.

\begin{proof}[Proof of Proposition \ref{LEM: Frst(a+1)>Frst(a) when a<1-1/r-eps}.]
We will only prove the case (i), and the case (ii) can be proved analogously.
Suppose that $t<s+\frac12+\sqrt{2s+\frac14}$.
Observe that in this case $H(\frac1{r-1})>0$.
We need to show $\Delta(a)>0$ for all $a\in[\gamma n,\lfloor\frac{r-1}{r}n\rfloor]$. Let $z=\frac{n-a}{a}$.
Then $z\in[\frac{1}{r-1},\frac{1-\gamma}{\gamma}]$. By Claims \ref{LEM: D=MH} and \ref{LEM: H is increasing},
it holds for sufficiently large $n$ that $\frac{\Delta(a)}{M(a)}=H(z)+o(1)\ge H\left(\frac1{r-1}\right)+o(1)>0.$
Since $M(a)>0$, this proves $\Delta(a)>0$.
\end{proof}

\begin{proof}[Proof of Proposition \ref{LEM: Frst d/n<r-1/r}.]
Let $\beta=\frac{r-1}{r}-\varepsilon$ and $\tilde C=\lambda_{s,t}\cdot C$. 	
By the definition of $\Delta(a)$ and Claim \ref{LEM: D=MH},
we see that $F\left(\left\lfloor\frac{r-1}{r}n\right\rfloor\right)-F\left(\left\lfloor\left(\frac{r-1}{r}-\varepsilon\right) n\right\rfloor\right)$ equals
\begin{equation}\label{equa: sum D}
\lambda_{s,t}\cdot\sum_{a=\lfloor\beta n\rfloor}^{\lfloor\frac{r-1}{r}n\rfloor-1}\Delta(a)=\lambda_{s,t}\cdot\sum_{a=\lfloor\beta n\rfloor}^{\lfloor\frac{r-1}{r}n\rfloor-1}M(a)\left[H\left(\frac{n-a}{a}\right)+o(1)\right],
\end{equation}
where $\lambda_{s,t}\cdot M(a)=\tilde C{a^{(r-2)s+t}(n-a)^{s-2}}.$
Then the equation \eqref{equa: sum D} becomes
\begin{equation*}
\tilde C\sum_{a=c n}^{\lfloor\frac{r-1}{r}n\rfloor-1}{a^{(r-2)s+t}(n-a)^{s-2}}H\left(\frac{n-a}{a}\right)+o(n^{(r-1)s+t-1}).\label{equa: sum a(n-a)}
\end{equation*}
We use Riemann integral to estimate the above summation as following
\begin{eqnarray*}
	&&\frac1{n^{(r-1)s+t-1}}\cdot\sum_{a=\lfloor\beta n\rfloor}^{\lfloor\frac{r-1}{r}n\rfloor-1}{a^{(r-2)s+t}(n-a)^{s-2}}H\left(\frac{n-a}{a}\right)\\
	&=&\sum_{a=\lfloor\beta n\rfloor}^{\lfloor\frac{r-1}{r}n\rfloor-1}\left[\left(\frac{a}{n}\right)^{(r-2)s+t}\left(\frac{n-a}{n}\right)^{s-2}H\left(\frac{n-a}{a}\right)\cdot\frac1n\right]\\
	&\xlongrightarrow{n\rightarrow\infty}&\int_{\beta}^{\frac{r-1}{r}}x^{(r-2)s+t}(1-x)^{s-2} H\left(\frac{1-x}x\right)dx =\int_{\frac1{r-1}}^{\frac1{\beta}-1}\frac{z^{s-2}H(z)}{(1+z)^{(r-1)s+t}}dz.
\end{eqnarray*}	
Let $I$ denote the above integral. By Claim \ref{LEM: H is increasing}, $H(z)>0$ for $z\in(\frac1{r-1},\frac1{\beta}-1)$. So $I>0$.
Putting everything together, one can obtain that
$F\left(\left\lfloor\frac{r-1}{r}n\right\rfloor\right)-F\left(\left\lfloor\left(\frac{r-1}{r}-\varepsilon\right)n\right\rfloor\right)=(\tilde CI+o(1))\cdot n^{(r-1)s+t-1}.$
Let $\xi=\frac{\tilde CI}2>0.$
Then it holds for sufficiently large $n$ that $F\left(\left\lfloor\frac{r-1}{r}n\right\rfloor\right)-F\left(\left\lfloor\left(\frac{r-1}{r}-\varepsilon\right) n\right\rfloor\right)>\xi n^{(r-1)s+t-1}.$
This proves Proposition \ref{LEM: Frst d/n<r-1/r}.
\end{proof}

\section{Proof of Proposition \ref{LEM: Key tech lem to pv complete case}}\label{app:C}
First we prove two claims. Let $q=t-s$ and $f(z)=(sz-q)(1+z)^q+(t+(r-2)s)z+q$.

\begin{claim}\label{PROP: H1+H2+H3=f+1/x}
There exists a polynomial $P(\alpha)$ with $P(0)=0$ such that the following holds.
For any fixed $C>0$, if $\alpha\in [0,C]$, then
$H(x,\alpha)=f(\alpha)+(q^2-q-rs+P(\alpha))/x+O(x^{-2}),$
where the absolute value of the constant term in $O(x^{-2})$ is bounded by $C, r, s$ and $t$.
\end{claim}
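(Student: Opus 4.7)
The plan is a direct Laurent expansion in $x^{-1}$ of each summand $H_i$, truncated at order $x^{-1}$. The key building blocks are
\[
\frac{(x)_q}{x^q}=\prod_{i=0}^{q-1}\left(1-\frac{i}{x}\right)=1-\frac{\binom{q}{2}}{x}+O(x^{-2}),
\]
and, by the same computation applied with $(1+\alpha)x$ in place of $x$,
\[
\frac{(x+\alpha x)_q}{x^q}=(1+\alpha)^q-(1+\alpha)^{q-1}\frac{\binom{q}{2}}{x}+O(x^{-2}).
\]
Multiplying these against the other two linear-in-$1/x$ factors that appear in the definitions of $H_1,H_2,H_3$ is then a routine term-by-term calculation.

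First I would verify that the $x^0$ contributions sum to $f(\alpha)$: $H_1$ contributes $(s\alpha-q)(1+\alpha)^q$, $H_2$ contributes $t\alpha+q$, and $H_3$ contributes $(r-2)s\alpha$, whose sum is exactly $(s\alpha-q)(1+\alpha)^q+(t+(r-2)s)\alpha+q=f(\alpha)$. Next, the $x^{-1}$ coefficient is some polynomial $g(\alpha)$ in $\alpha$, so it suffices to check that $g(0)=q^2-q-rs$; then $P(\alpha):=g(\alpha)-g(0)$ gives the required polynomial with $P(0)=0$. Specializing each $H_i$ to $\alpha=0$, the three $x^{-1}$ contributions work out to $q^2-q-qs-t+q\binom{q}{2}$ from $H_1$, $q+s(q-1)-q\binom{q}{2}$ from $H_2$, and $-(r-2)s$ from $H_3$. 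The $\binom{q}{2}$-terms cancel, and the remainder telescopes to $q^2-t-(r-1)s=q^2-(t-s)-rs=q^2-q-rs$, as desired.

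For the $O(x^{-2})$ remainder, each factor in the expansions above has a Taylor tail of the form $C_\alpha/x^2$ with $C_\alpha$ polynomial in $\alpha$ whose coefficients depend only on $r,s,t$. For $\alpha\in[0,C]$ these tails are uniformly bounded in terms of $C,r,s,t$, and standard product estimates then yield the claimed uniform $O(x^{-2})$ bound. The entire argument is purely computational; the main obstacle is simply keeping the bookkeeping straight, which I would guard against by performing the cancellations symbolically and by cross-checking the $\alpha=0$ specialization as above.
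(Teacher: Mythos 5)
Your proof is correct and follows essentially the same route as the paper's: expand each $H_i$ as a Laurent series in $x^{-1}$, verify the constant terms sum to $f(\alpha)$, and then subtract off the $\alpha=0$ value of the $x^{-1}$ coefficient to get $P$. Your observation that it suffices to compute the $x^{-1}$ coefficient only at $\alpha=0$ (rather than tracking the full polynomials $\tilde P_i(\alpha)$ as the paper does) is a modest but legitimate streamlining of the same bookkeeping, and your arithmetic checks out against the paper's.
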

\begin{proof}
Recall that $H(x,\alpha)=\sum_{i=1}^3 H_i(x,\alpha)$. So we need to estimate each $H_i$.

Let $C>0$ be fixed and $\alpha\in [0,C]$.
Write $(z)_q=z^q+Az^{q-1}+g(z)$, where $g(z)$ is a polynomial of degree at most $q-2$.
Then we have ${(x+\alpha x)_q}=(1+\alpha)^qx^q+{A(1+\alpha)^{q-1}}x^{q-1}+O(x^{q-2}).$
From the definition of $H_1$ it follows that
$$H_1(x,\alpha)=\left(s\alpha-q-\frac{qs+t}{x}\right)\left(1+\frac{1-q}{x}\right)\left[(1+\alpha)^q+\frac{A(1+\alpha)^{q-1}}{x}+O(x^{-2})\right].$$
Expanding this multiplication, since $\alpha\in [0,C]$ is bounded, we obtain
$H_1(x,\alpha)=(s\alpha-q)(1+\alpha)^q+\tilde P_1(\alpha)/x+O(x^{-2}),$
where $\tilde P_1(\alpha)=-(qs+t)(1+\alpha)^q+(s\alpha-q)(1-q)(1+\alpha)^q+(s\alpha-q)A(1+\alpha)^{q-1}.$
Define $P_1(\alpha)=\tilde P_1(\alpha)-\tilde P_1(0)$, which is a polynomial with $P_1(0)=0$.
Then we have
\begin{equation*}
H_1(x,\alpha)=(s\alpha-q)(1+\alpha)^q+\frac{-(qs+t)-q(1-q)-qA+ P_1(\alpha)}{x}+O(x^{-2}).
\end{equation*}
By similar arguments one can write $H_2$ and $H_3$ as
\begin{eqnarray*}
H_2(x,\alpha)&=& t\alpha+q+\frac {qs-s+q+qA+P_2(\alpha)}x+O(x^{-2}),	\\
H_3(x,\alpha)&=& (r-2)s\alpha+\frac{-(r-2)s+P_3(\alpha)}{x}+O(x^{-2}),
\end{eqnarray*}
where $P_i$ is a polynomial with $P_i(0)=0$ for $i\in \{2,3\}$.
Summing up the above we obtain
$$H(x,\alpha)=f(\alpha)+\frac{q^2-q-rs+P(\alpha)}{x}+O(x^{-2}),$$
where $P(\alpha)=\sum_{i=1}^3 P_i(\alpha)$ is a polynomial with $P(0)=0$.
This proves Claim \ref{PROP: H1+H2+H3=f+1/x}.
\end{proof}

\begin{claim}\label{Prop: f is increasing}
The function $f(z)$ is strictly increasing in $[0,+\infty)$ with $f(0)=0$ and $f'(0)=sr+q-q^2$.
\end{claim}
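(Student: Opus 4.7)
The plan is short since the statement reduces to a straightforward convexity argument. First I would record the boundary data by direct substitution: $f(0) = -q\cdot 1 + 0 + q = 0$, and differentiation gives
\[
f'(z) = s(1+z)^q + q(sz-q)(1+z)^{q-1} + (t+(r-2)s),
\]
so that $f'(0) = s - q^2 + t + (r-2)s = s(r-1) + t - q^2 = sr + q - q^2$, using $t = s+q$. This already verifies the two explicit identities requested in the statement.

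The heart of the argument is then to show $f$ is convex on $[0,\infty)$. Differentiating once more and factoring carefully yields
\[
f''(z) = q(1+z)^{q-2}\bigl[s(q+1)z + 2s + q - q^2\bigr].
\]
The standing hypothesis $t \leq s+\frac{1}{2}+\sqrt{2s+\frac{1}{4}}$ on $t$ is equivalent to $(q-\tfrac12)^2 \leq 2s+\tfrac14$, that is, $q^2 - q \leq 2s$, which is exactly $2s+q-q^2 \geq 0$. Hence for $z \geq 0$ the bracket is nonnegative, and strictly positive for $z>0$, so $f''(z) \geq 0$ on $[0,\infty)$ and $f''(z)>0$ on $(0,\infty)$. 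The degenerate case $q=0$ (i.e.\ $s=t$) is handled separately: the definition collapses to $f(z)=rsz$, which is linear and trivially strictly increasing.

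Combining these facts finishes the proof. Since $f'$ is strictly increasing on $(0,\infty)$ and $f'(0) = sr + q - q^2 = (r-2)s + (2s+q-q^2) \geq 0$ (using $r\geq 2$ together with the hypothesis), we get $f'(z) > 0$ for every $z>0$, so $f$ is strictly increasing on $[0,\infty)$. The only real obstacle is the bookkeeping in deriving the clean factorization of $f''$; once that is in hand, the conclusion follows immediately from reading off $2s+q-q^2\geq 0$ from the assumed upper bound on $t$.
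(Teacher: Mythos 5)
Your proof is correct and follows essentially the same route as the paper's: you compute $f'$ and $f''$, arrive at the identical factorization $f''(z)=q(1+z)^{q-2}[s(q+1)z+2s+q-q^2]$, observe that the standing hypothesis $t\le s+\tfrac12+\sqrt{2s+\tfrac14}$ gives $2s+q-q^2\ge 0$, treat $q=0$ separately, and conclude $f'(z)>0$ for $z>0$ together with $f'(0)\ge 0$. The only cosmetic difference is that you leave $f'$ unsimplified (yours is $s(1+z)^q+q(sz-q)(1+z)^{q-1}+t+(r-2)s$, which rewrites to the paper's $s(q+1)(1+z)^q-tq(1+z)^{q-1}+t+(r-2)s$ via $sz-q=s(1+z)-t$), which does not affect the argument.
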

\begin{proof}
It is easy to verify $f(0)=0$ and obtain
$$f'(z)=s(q+1)(1+z)^q-tq(1+z)^{q-1}+t+(r-2)s,$$
$$f''(z)=q(1+z)^{q-2}[s(q+1)z+2s+q-q^2].$$
So $f'(0)=sr+q-q^2.$	
Next we show $f(z)$ is strictly increasing.
If $q=0$, then $f(z)=sz+(t+(r-2)s)z=rsz$, which is obviously increasing.
So we may assume $q\ge 1$.
Since $s\leq t\leq s+\frac12+\sqrt{2s+\frac14}$, we have $2s+q-q^2\geq 0$.
This shows that $f'(0)=sr+q-q^2\geq 0$ and $f''(z)>0$ for $z>0$,
implying that $f'(z)>f'(0)\geq 0$ for $z>0$ and thus $f(z)$ is strictly increasing in $[0,+\infty)$. This completes the proof.	
\end{proof}

We are ready to prove Proposition \ref{LEM: Key tech lem to pv complete case}

\begin{proof}[Proof of Proposition \ref{LEM: Key tech lem to pv complete case}.]
First, we consider the case (i).
Suppose that $C>\varepsilon>0$ are fixed and $\alpha\in [\varepsilon, C]$.
By Claim \ref{PROP: H1+H2+H3=f+1/x} there exists a polynomial $P(\alpha)$ such that
$H(x,\alpha)=f(\alpha)+(q^2-q-rs+P(\alpha))/x+O(x^{-2}).$
By Claim \ref{Prop: f is increasing} we have $f(\alpha)\ge f(\varepsilon)>0.$
Since $|P(\alpha)|$ is bounded (as $\alpha\in [\varepsilon, C]$),
there exists a large $x_0>0$ such that for $x\ge x_0$
\begin{equation*}
\left|H(x,\alpha)-f(\alpha)\right|=\left|\frac{q^2-q-sr+P(\alpha)}{x}+O(1/x^2)\right|\le\frac12 f(\varepsilon).
\end{equation*}
Now it follows that $H(x,\alpha)\ge f(\varepsilon)-\frac12 f(\varepsilon)>0.$

Next we consider the case (ii). We have $s\leq t<s+\frac12+\sqrt{2s+\frac14}$,
which shows that $2s+q-q^2>0$. By Claim \ref{Prop: f is increasing}, $f(0)=0$ and $f'(0)=rs+q-q^2\geq 2s+q-q^2>0$.
So there exists $\varepsilon_1>0$ such that for $\alpha\in (0,\varepsilon_1]$ it holds that $|\frac{f(\alpha)}{\alpha}-f'(0)|\leq f'(0)/5$.
This implies that $f(\alpha)\geq 4f'(0)\alpha/5$ for $\alpha\in [0,\varepsilon_1]$.
Also since $P(\alpha)$ is a polynomial with $P(0)=0$, there exists $\varepsilon_2>0$ such that for $\alpha\in [0,\varepsilon_2]$,
$|P(\alpha)|\leq f'(0)/4.$
Applying Claim \ref{PROP: H1+H2+H3=f+1/x} with $C$ being $\varepsilon_0=\min\{\varepsilon_1,\varepsilon_2\}$, for $\alpha\in [0,\varepsilon_0]$ we have
$$H(x,\alpha)=f(\alpha)+\frac{P(\alpha)-f'(0)}{x}+O(x^{-2})\geq \frac{4f'(0)\alpha}{5}-\frac{5f'(0)}{4x}-\frac{D}{x^2},$$
where $D>0$ is bounded by $\varepsilon_0, r, s, t$.
Let $x_1=4D/f'(0)$. Then for $x\ge x_1$ and $\alpha\in [\frac2x,\varepsilon_0]$,
$$H(x,\alpha)\geq \frac{4f'(0)\alpha}{5}-\frac{5f'(0)}{4x}-\frac{f'(0)}{4x}=\left(\frac45\alpha-\frac{3}{2x}\right) f'(0)\geq \frac{f'(0)}{10x}>0.$$
This completes the proof of Proposition \ref{LEM: Key tech lem to pv complete case}.
\end{proof}


\begin{thebibliography}{99}

\bibitem{Allen-2014}
P. Allen, J. B\"ottcher and Y. Person, An improved error term for minimal $H$-decompositions of graphs, J. Combin. Theory Ser. B 108 (2014), 92--101.

\bibitem{AKS}
N. Alon, A. Kostochka and C. Shikhelman, Many $T$ copies in $H$-free subgraphs of random graphs, J. Comb., to appear.

\bibitem{Alon-2015}
N. Alon and C. Shikhelman, Many $T$ copies in $H$-free graphs, J. Combin. Theory Ser. B 121 (2016), 146--172.

%\bibitem{AS}
%N. Alon and C. Shikhelman, $H$-free subgraphs of dense graphs maximizing the number of cliques and their blow-ups, arXiv:1706.05642.

\bibitem{Andrasfai-Erdos-Sos-74}
B. Andr\'asfai, P. Erd\H{o}s and V. T. S\'os, On the connection between chromatic number, maximal clique and minimum degree of a graph, Discrete Math. 8 (1974), 205--218.

\bibitem{BG}
B. Bollob\'as and E. Gy\H{o}ri, Pentagons vs. triangles, Discrete Math. 308 (2008) 4332--4336.


\bibitem{Diestel-3rd}
R. Diestel, Graph Theory, 3rd Edition, Springer-Verlag (Berlin/Heidelberg/New York), (2006), 184-185.

\bibitem{Eckhoff-2004}
J. Eckhoff, A new Tur\'an-type theorem for cliques in graphs, Discrete Math. 282 (2004), 113--122.


\bibitem{Erdos-62}
P. Erd\H{o}s, On the number of complete subgraphs contained in certain graphs, Magy. Tud. Akad. Mat. Kut. Int\'ez. K\"ozl. 7 (1962), 459--474.

\bibitem{Erd66}
P. Erd\H{o}s, Some recent results on extremal problems in graph theory, in: Theory of Graphs, International Symp. Rome, 1966, pp. 118--123.

\bibitem{Erdos-66}
P. Erd\H{o}s, On some new inequalities concerning extremal properties of graphs, in: Theory of Graphs, Proc. Colloq., Tihany, 1966, Academic Press, New York, (1968), 77--81.

\bibitem{ES66}
P. Erd\H{o}s and M. Simonovits, A limit theorem in graph theory, Studia Sci. Math. Hungar. 1 (1966), 51--57.


\bibitem{Erdos-Simononvits-supersaturation}
P. Erd\H{o}s and M. Simonovits, Supersaturated graphs and hypergraphs, Combinatorica 3 (1983), 181--192.

%\bibitem{E-S-S}
%P. Erd\H{o}s and A. H. Stone, On the structure of linear graphs, Bull. Amer. Math. Soc. 52 (1946), 1087--1091.

%\bibitem{Fox-Benny}
%J. Fox and B. Sudakov, Dependent random choice, Random Structures and Algorithms, 38.1-2 (2011), 68-99.

\bibitem{Furedi-2015}
Z. F\"uredi, A proof of the stability of extremal graphs, Simonovits' stability from Szemer\'edi's regularity, J. Combin. Theory Ser. B 115 (2015), 66--71.


\bibitem{FKL}
Z. F\"uredi, A. Kostochka and R. Luo, Extensions of a theorem of Erd\H{o}s on nonhamiltonian graphs, arXiv:1703.10268.


\bibitem{GGMV}
D. Gerbner, E. Gy\H{o}ri, A. Methuku and M. Vizer, Generalized Tur\'an problems for even cycles, arXiv:1712.07079.


\bibitem{GMV}
D. Gerbner, A. Methuku and M. Vizer, Generalized Tur\'an problems for disjoint copies of graphs, arXiv:1712.07072.

\bibitem{GKPP}
D. Gerbner, B. Keszegh, C. Palmer and B. Patk\'os, On the number of cycles in a graph with restricted cycle lengths,
arXiv:1610.03476.


\bibitem{GS}
L. Gishboliner and A. Shapira, A generalized Tur\'an problem and its applications, arXiv:1712.00831.

\bibitem{Grz}
A. Grzesik, On the maximum number of five-cycles in a triangle-free graph, J. Combin. Theory Ser. B 102 (2012), 1061--1066.


\bibitem{GL}
E. Gy\H{o}ri and H. Li, The maximum number of triangles in $C_{2k+1}$-free graphs, Combin. Probab. Comput. 21 (1-2) (2012), 187--191

\bibitem{HHKNR}
H. Hatami, J. Hladk\'y, D. Kr\'al, S. Norine and A. Razborov, On the number of pentagons in triangle-free graphs, J. Combin. Theory Ser. A 120 (3) (2013), 722--732.

\bibitem{Sos-54}
T. K\"ov\'ari, V. T. S\'os and P. Tur\'an, On a problem of K. Zarankiewicz, Colloq. Math. 3 (1954), 50--57.

\bibitem{LTTZ}
P. Loh, M. Tait, C. Timmons and R. Zhou, Induced Tur\'an numbers, Combin. Probab. Comput., to appear.

\bibitem{Luo}
R. Luo, The maximum number of cliques in graphs without long cycles,
J. Combin. Theory Ser. B 128 (2018), 219--226.

\bibitem{MYZ}
J. Ma, X. Yuan and M. Zhang, Some extremal results on complete degenerate hypergraphs, J. Combin. Theory, Ser. A 154 (2018), 598--609.

%\bibitem{Mantel-07}
%W. Mantel, Problem 28, Wiskundige Opgaven 10 (1907), 60--61.

\bibitem{Simonovits-66}
M. Simonovits, A method for solving extremal problems in graph theory, stability problems, in: Theory of Graphs, Proc.
Colloq., Tihany, 1966, Academic Press, New York, (1968), pp. 279--319.

\bibitem{Szemeredi-78}
E. Szemer\'edi, Regular partitions of graphs, in: J. C. Bermond, J. C.  Fournier, M. Las Vergnas, D. Sotteau (Eds.), Probl\`{e}mes combinatoires et Th\'eorie des Graphes, Proc. Colloque Inter. CNRS, CNRS, Paris, (1978), 399--401.

\bibitem{Turan-41}
P. Tur\'an, On an extremal problem in graph theory, Mat. Fiz. Lapok 48 (1941), 436-452 (in Hungarian).

\end{thebibliography}
\end{document}